\documentclass[12pt,paper]{article}
\usepackage{amsmath,amssymb,amsthm,amsfonts,fullpage}
\usepackage{enumitem}  
\usepackage[
backend=bibtex,
style=numeric,
sorting=nyt
]{biblatex}
\usepackage{authblk}
\usepackage[hidelinks,pdfauthor={Abdelghaffar Chibloun}]{hyperref}
\hypersetup{
	colorlinks   = true,
	citecolor    = red
}
\usepackage{relsize}
\usepackage{cleveref}
\usepackage{color,soul}
\usepackage{color,xcolor,soul}

\soulregister\cite7
\soulregister\ref7

\providecommand{\keywords}[1]
{
	{\small	
		\textbf{{Keywords--}} #1}
}
\providecommand{\msc}[1]
{
	{\small	
		\textbf{{Mathematics Subject Classification (2020) --}} #1}
}

\theoremstyle{plain}
\newtheorem{definition}{Definition}
\newtheorem{theorem}{Theorem}
\newtheorem{proposition}{Proposition}
\newtheorem{corollary}{Corollary}
\newtheorem{lemma}{Lemma}
\newtheorem{remark}{Remark}

\newtheorem{notation}{Notation}

\DeclareRobustCommand{\rchi}{{\mathpalette\irchi\relax}}
\newcommand{\irchi}[2]{\raisebox{\depth}{$#1\chi$}} 

\usepackage{tikz-cd}

\usepackage{nicematrix}

\makeatletter
\def\@fnsymbol#1{\ensuremath{\ifcase#1\or * \or \ddagger \or \dagger \or \mathsection \or \mathparagraph \or * \else\@ctrerr\fi}}
\makeatother

\title{{
On module endomorphisms annihilated by residually square-free polynomials over Artinian local PIRs

}}

\author[1]{Abdelghaffar Chibloun\thanks{E-mail: \rm{ab.chibloun@edu.umi.ac.ma}}}
\author[2]{Mustapha Najmeddine\thanks{E-mail: \rm{m.najmeddine@umi.ac.ma}}}

\affil[1]{Department of Mathematics, FS–Meknes, Moulay Ismail University, Meknes, Morocco}
\affil[2]{Department of Mathematics, ENSAM–Meknes, Moulay Ismail University, Meknes, Morocco}
\date{}
\begin{document}

	\maketitle
	\begin{abstract}
		We consider finitely generated free modules and their endomorphisms over an Artinian local PIR, also known as an SPIR.
		We characterize submodules of such modules by constructing minimal generating sets from given bases.	We prove that any endomorphism possessing a square-free annihilating polynomial modulo the maximal ideal admits a principal annihilating ideal, and establish that every free invariant submodule under such an endomorphism has an invariant complement. Moreover,  for such endomorphisms, we detail the construction of invariant submodules and compute the number of free invariant submodules when the ring is finite. Furthermore, we introduce canonical matrices associated with these endomorphisms, yielding a complete characterization of matrix similarity via invariant polynomials in this context. Finally, we address the non-derogatory case, obtaining a sharper characterization of submodule invariance. 
	\end{abstract}
	
	\keywords{Artinian local PIR, Special principal ideal ring (SPIR), Module endomorphisms, Square-free polynomials, Invariant submodules, Matrix similarity}
	
	\msc{13E10, 13F10, 13C05, 15B33, 15A21  }
	
	\section{Introduction}
	
	A \textit{special principal ideal ring} (SPIR) is a commutative principal ideal ring (PIR) having exactly one prime ideal. It is obviously local and Noetherian. Moreover, as it has Krull dimension 0, it is an Artinian local PIR. Conversely, every Artinian local PIR is an SPIR. Indeed, as such a ring has Krull dimension 0, each of its prime ideals is maximal, hence as it is local, it has only one prime ideal.	
	SPIRs play a significant role in commutative algebra, mainly due to the Zariski–Samuel Structure Theorem for PIRs  \cite[Volume 1, Chapter 4, Theorem 33]{zariski1960commutative}, which states that every PIR is isomorphic to a finite direct product of PIDs and SPIRs. 
	They arise naturally in several classical contexts. 	
	Prominent examples include \textit{finite chain rings}---such as \textit{Galois rings} $\text{GR}(p^n, r)$ (which reduce to $\mathbb{Z}/p^n\mathbb{Z}$ when $r=1$) and their \textit{Eisenstein extensions} (see \cite[Theorem XVII.5 ]{mcdonald1974finite})---as well as quotients of the form $k[x]/(f^n)$, where $k$ is a field and $f \in k[x]$ is irreducible.
	
	Study of matrices with coefficients in an SPIR has attracted considerable attention over several decades. Davis~\cite{davis1968certain} showed that over $\mathbb{Z}/p^n\mathbb{Z}$, the similarity of two square matrices reduces to the similarity of their reductions modulo $p,$ provided both matrices satisfy a common polynomial whose reduction modulo $p$ is square-free. McDonald~\cite{mcdonald1978similarity} developed the general theory of similarity over SPIRs and demonstrated that a "rational" canonical form is available for a suitable class of matrices, however, a counterexample was later given by Guralnick in \cite{guralnick1981similarity}. Nechaev~\cite{nechaev1986similarity} subsequently classified the similarity classes of square matrices of order $3$ over $\mathbb{Z}/p^2\mathbb{Z}.$ Kurakin~\cite{kurakin2006similarity} established a structured approach to classify similarity over SPIRs by associating to each matrix a system of polynomial ideals over a discrete valuation ring that remains invariant under similarity transformations. Avni~\cite{avni2009similarity} analyzed the similarity classes of $3 \times 3$ matrices over a local PIR, producing a classification of representatives and an explicit enumeration in the case where the residue field is finite. Prasad~\cite{prasad2015similarity} developed a theory of normal forms for similarity classes over an SPIR of length two, and described the similarity classes for square matrices with order at most $4.$ However,
	despite this progress, the classification of similarity classes over SPIRs remains incomplete in general: existing results either restrict the length of the ring, the size of the matrices, or the class of endomorphisms considered.

	On the other hand, the study of invariant submodules of an endomorphism is central to module theory and linear algebra, providing a structural framework to break down complex algebraic systems into simpler components. 
	Formally, given a module $M$ and an endomorphism $\varphi$, a submodule $N \le M$ is $\varphi$-invariant if it satisfies $\varphi(N) \subseteq N.$ When a module can be decomposed into a direct sum of invariant submodules, the endomorphism can be represented in a block diagonal matrix form, which is easier to work with. The study of invariant submodules has also proved to be useful in coding theory where, for example, polycyclic codes can be seen as invariant submodules of a certain endomorphism of $R^n$ (see \cite{radkova2009constacyclic, shi2020polycyclic, EdgarMaryamSteve, HassanTcyclic}).

	In this paper, we investigate a class of endomorphisms of finitely generated free modules over an SPIR, namely, those with annihilating polynomials that are square-free modulo the maximal ideal.

		The paper is organized as follows. Section \ref{sec:Prelim} reviews basic definitions and properties of module endomorphisms over commutative rings, alongside foundational results on local and principal ideal rings used throughout. Section \ref{sec:artinianRings} establishes useful results concerning polynomials and finitely generated free modules over Artinian rings. In Section \ref{sec:SPIR}, we describe properties of SPIRs and characterize submodules of finitely generated free modules by constructing minimal generating sets from a given basis. In Section \ref{sec:endWithSqFreeAnnihilator}, we demonstrate that over an SPIR, an endomorphism of a finitely generated free module possesses a principal annihilating ideal, provided its annihilating polynomial is square-free modulo the maximal ideal. We also prove that every free invariant submodule under such an endomorphism admits an invariant complement. Continuing the study of such endomorphisms, Section \ref{sec:invariance} investigates their invariant submodules, detailing their construction from a given module basis and calculating the total number of free invariant submodules in the finite ring case. Additionally, we introduce associated canonical matrices for these endomorphisms, allowing us to characterize matrix similarity via polynomial invariants in this context. Finally, Section \ref{sec:nonDerogInvariance} addresses the non-derogatory case where a sharper characterization of invariance is given.

	\section{Preliminaries}\label{sec:Prelim}

	Throughout this paper, all rings are assumed to be commutative with identity $1\neq 0.$	
	Let $R$ be a ring and $n\in \mathbb{N}^*.$ Let $M_{m\times n}(R)$ denote the set of $m\times n$ matrices over $R,$ $M_n(R)$ denote the $n\times n$ matrix ring over $R$ and $GL_n(R)$ denote the general linear group of degree $n$ over $R.$ Moreover, for each $R$-module $M,$  $\text{End}_R(M)$ denotes the endomorphism ring of $M$ and $\text{Aut}_R(M)$ denotes the group of automorphisms of $M.$   For a matrix $A\in M_{n}(R),$ the \textit{characteristic polynomial} of $A$ is the polynomial $\rchi_A(x)=det(xI_n-A)\in R[x].$ $\rchi_A$ is a monic polynomial of degree $n.$ Conversely, every monic polynomial of degree $n$  in $R[x]$ is the characteristic polynomial of some matrix  in $M_{n}(R).$ Namely, for $a_0,\dots, a_{n-1} \in R,$ and $f=x^n - \sum_{i=0}^{n-1}a_ix^i,$ let 
	$\displaystyle com(f)=\begin{bmatrix}
		0 & 0 & \cdots & 0 & -a_0 \\
		1 & 0 & \cdots & 0 & -a_1 \\
		0 & 1 & \cdots & 0 & -a_2 \\
		\vdots & \vdots & \ddots & \vdots & \vdots \\
		0 & 0 & \cdots & 1 & -a_{n-1}
	\end{bmatrix}$ be the \textit{companion matrix} of $f.$ Then we can verify that $\rchi_{com(f)}=f$ (see \cite{brown1993matrices} for a comprehensive treatment of matrices over commutative rings).
	
	Consider the set $R[A]=\{ r_0A^0 + \dots + r_iA^i ~|~ i\in \mathbb{N} \text{ and }r_0,\dots, r_i \in R\} \subseteq M_n(R).$ $R[A]$ is the image of the $R$-algebra morphism $\sigma_A : R[x] \longrightarrow M_n(R)$ given by the natural substitution $p(x) \longrightarrow p(A).$ The kernel of $\sigma_A$ is called the \textit{null ideal} of $A$ and we will denote it $N_A .$ We have then, $R[A]$ is isomorphic to $R[x]/N_A$ as $R$-algebras. Moreover, by the Cayley-Hamilton theorem (see \cite[Theorem 7.23]{brown1993matrices}), we have $\rchi_A \in N_A,$ so the null ideal of a nonzero matrix is nontrivial. 
	$A$ induces a natural endomorphism $R^n \longrightarrow R^n$ by acting on the canonical basis of $R^n,$ we denote it $\varphi_A.$ Conversely, for an endomorphism $\varphi \in \text{End}_R(R^n),$ we can associate a matrix in $M_n(R)$ to $\varphi$ related to the canonical basis of $R^n,$ and we denote it $M_\varphi.$
	Two matrices $M,N \in M_n(R)$ are said to be \textit{similar} if there exists an invertible matrix $P\in GL_n(R)$ such that $M=PNP^{-1}.$ Similarity induces an equivalence relation over $M_n(R),$ and if $M$ and $N$ are similar we write $M\sim N.$ It is clear that if $M \sim N,$ then $\rchi_M = \rchi_N.$ Moreover, if $\mathcal{B}$ and $\mathcal{B}'$ are two bases of $R^n,$ and $M$ and $N$ are the matrices of an endomorphism $\varphi \in \text{End}_R(R^n)$ in $\mathcal{B}$ and $\mathcal{B}'$ respectively, then $M\sim N.$ Hence we can define the characteristic polynomial of an endomorphism $\varphi$ of a finitely generated free $R$-module $F,$ to be the  characteristic polynomial of an associated matrix in some basis of the module, and we denote it $\rchi_\varphi.$ In the same manner, we define $R[\varphi]$ and $N_\varphi,$ and we have $\rchi_\varphi \in N_\varphi.$ Furthermore, we associate to $\varphi$ an $R[x]$-module denoted  $M(\varphi),$ and obtained from $F$  by defining the product of $g(x)\in R[x]$ with $\alpha \in F$ by $g(x).\alpha=g(\varphi)(\alpha).$ We call it the module associated
	with $\varphi.$
	
	If $R$ is local with maximal ideal $\mathfrak{m}$ and residue field $k,$ it is easy to see that the canonical projection $\mu : R \longrightarrow k$ induces naturally surjective morphisms $R[x] \longrightarrow k[x],$ $R^n \longrightarrow R^n/ \mathfrak{m}R^n \simeq k^n$ and $M_n(R) \longrightarrow M_n(k)$ that we all denote $\mu.$ 
	More generally, let $F$ be a finitely generated free $R$-module, $\mu$ induces an onto map $\text{End}_R(F) \longrightarrow \text{End}_k(F/\mathfrak{m}F)$ such that for each $\varphi$ in $\text{End}_R(F),$ $\mu(\varphi): F/ \mathfrak{m}F  \longrightarrow  F/ \mathfrak{m}F $ is such that for all $a\in F,$ $\mu(\varphi)(a+\mathfrak{m}F)=\varphi(a)+\mathfrak{m}F,$ and the following diagram commutes.
	
	\begin{equation}\label{eq:diagram}
		\begin{tikzcd}
			F && F \\
			\\
			{F/\mathfrak{m}F} && {F/\mathfrak{m}F}
			\arrow["\varphi", from=1-1, to=1-3]
			\arrow["\mu"{description}, from=1-1, to=3-1]
			\arrow["\mu"{description}, from=1-3, to=3-3]
			\arrow["{\mu(\varphi)}", from=3-1, to=3-3]
		\end{tikzcd}
	\end{equation}

	We conclude this preliminary section with a collection of essential results on modules over local and principal ideal rings that will be used throughout the remainder of the paper.
	\begin{theorem}(\cite[Theorem 2.3]{matsumura1989commutative})\label{thm:matsumura2.3}
		Let $(R,\mathfrak{m},k)$ be a local ring and $M$ a finitely generated $R$-module. 
		$M/\mathfrak{m}M$ is a finite-dimensional vector space over $k$, and we write $n$ for its dimension.
		\begin{itemize}
			\item[(i)] If we take a basis $\{u_1 +\mathfrak{m}M,\dots, u_n+\mathfrak{m}M\}$ of $M/\mathfrak{m}M $ over $k$, then $\{u_1,\dots,u_n\}$ is a minimal generating set of $M$;
			\item[(ii)] conversely every minimal generating set of $M$ is obtained in this way, and so has $n$	 elements.
			\item[(iii)] If $\{u_1,\dots,u_n\}$ and $\{v_1,\dots,v_n\}$ are both minimal generating sets of $M,$ and $v_i=\sum a_{ij}u_i$ with $a_{ij}\in R$ then $\text{det}(a_{ij})$ is a unit of $R,$ so that $(a_{ij})$ is an invertible matrix.
		\end{itemize}
	\end{theorem}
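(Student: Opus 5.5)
The plan is the standard Nakayama-lemma argument. Since $M$ is finitely generated, so is $M/\mathfrak{m}M$; and because $\mathfrak{m}$ annihilates it, it is a $k$-vector space, hence finite-dimensional of some dimension $n$.

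For (i), let $N=\sum_i Ru_i$. By hypothesis the images $u_i+\mathfrak{m}M$ span $M/\mathfrak{m}M$, so $M=N+\mathfrak{m}M$. Then $\mathfrak{m}(M/N)=M/N$, and since $M/N$ is finitely generated, Nakayama's lemma gives $M/N=0$, i.e.\ $M=N$. For minimality, suppose some $u_j$ could be omitted. Then the remaining $n-1$ elements would generate $M$, so their images would span $M/\mathfrak{m}M$. This contradicts the fact that $\dim_k M/\mathfrak{m}M=n$.

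For (ii), take a minimal generating set $\{v_1,\dots,v_r\}$ of $M$. Its images span $M/\mathfrak{m}M$, so some subset of them is a basis. By (i), the corresponding subset of the $v_i$ already generates $M$. Minimality then forces this subset to be everything, so the images of all the $v_i$ form a basis and $r=n$.

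For (iii), reduce the relation modulo $\mathfrak{m}$. By (ii) both $\{\bar u_i\}$ and $\{\bar v_i\}$ are bases of $M/\mathfrak{m}M$, so the reduced matrix $(\bar a_{ij})$ is a change-of-basis matrix and hence invertible over $k$. Therefore $\det(a_{ij})\notin\mathfrak{m}$, and in the local ring $R$ this means $\det(a_{ij})$ is a unit. The adjugate formula then shows that $(a_{ij})$ is invertible over $R$.

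The only step that needs care is (iii): a matrix over $R$ need not be determined by the $v_i$ when $M$ is not free, since different choices of coefficients $a_{ij}$ can express the same $v_i$. The argument above avoids this issue because it works with any choice of coefficients, and any such choice reduces to the unique change-of-basis matrix over $k$.
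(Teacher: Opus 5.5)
Your proof is correct and takes essentially the same route as the source the paper cites (Matsumura, Theorem~2.3), which the paper quotes without reproving: Nakayama's lemma applied to $M=N+\mathfrak{m}M$ plus a dimension count for (i), extracting a basis from the images and invoking minimality for (ii), and reducing the coefficient matrix modulo $\mathfrak{m}$ for (iii). Your closing observation is also correct: any choice of the non-unique coefficients $a_{ij}$ reduces to the unique change-of-basis matrix over $k$. This is precisely why (iii) is well posed when $M$ is not free.
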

	
	We have then, that over a local ring $R,$ for a finitely generated $R$-module $M,$ every minimal generating set of $M$ has the same cardinality. We call the minimal number of generators of $M$ and denote it $\nu_R(M).$ An immediate consequence of Theorem \ref{thm:matsumura2.3}, is that for any two finitely generated $R$-modules $M$ and $N,$ $\nu_R(M\oplus N)=\nu_R(M)+\nu_R(N).$
	
	\begin{proposition}(\cite[Theorem 2.5.]{matsumura1989commutative})\label{prop:projectiveFreeOverLocal}
		Let $(R,\mathfrak{m})$ be a local ring, then a projective module over $R$ is free.
	\end{proposition}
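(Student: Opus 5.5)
The plan is Kaplansky's argument. Since the statement concerns arbitrary projective modules, the finitely generated case (which would follow from Theorem~\ref{thm:matsumura2.3}) is not enough. The proof splits into two independent steps: a general reduction to the countably generated case, valid over any ring, and a direct argument for countably generated projectives over a local ring $(R,\mathfrak{m})$.

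\emph{Step 1 (reduction).} I would prove that a direct summand of a direct sum of countably generated modules is again such a direct sum. Apply it to $P\oplus Q=F$ with $F$ free, which is a direct sum of cyclic modules. Write $F=\bigoplus_{i\in I}E_i$ with each $E_i$ countably generated. By transfinite recursion, build a continuous increasing chain $(F_\alpha)$ of submodules of $F$ with the following properties:
\begin{itemize}
\item each $F_\alpha$ is a sum of some of the $E_i$;
\item $F_\alpha=(F_\alpha\cap P)\oplus(F_\alpha\cap Q)$;
\item $F_{\alpha+1}/F_\alpha$ is countably generated;
\item the union of the chain is $F$.
\end{itemize}
For the successor step, start from one new $E_i$. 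Alternately close up under the projections onto $P$ and $Q$ and under the decomposition into $E_j$'s; each closure step adds only countably many $E_j$, and one takes the union of the resulting countable sequence. Then $P_\alpha=F_\alpha\cap P$ is a summand of $P_{\alpha+1}$, and its complement is isomorphic to a summand of $F_{\alpha+1}/F_\alpha$, hence countably generated. This gives $P=\bigoplus_\alpha (P_{\alpha+1}/P_\alpha)$. Each of these summands is a countably generated projective module, being a direct summand of $P$.

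\emph{Step 2 (key lemma).} Let $P$ be a projective module over a local ring, and $x\in P$. I claim $x$ lies in a finitely generated free direct summand of $P$. Take $F=P\oplus Q$ free, with $\pi$ the projection onto $P$. Among all bases of $F$, choose one, $(e_i)$, in which $x=a_1e_1+\dots+a_ne_n$ with $n$ minimal. Minimality forces no $a_j$ to lie in the ideal generated by the other $a_i$; otherwise a unimodular change of basis would shorten the expression. Write $\pi(e_i)=\sum_j b_{ij}e_j+y_i$, where $y_i$ involves only basis vectors other than $e_1,\dots,e_n$. Since $\pi(x)=x$, we get $a_j=\sum_i a_ib_{ij}$. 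Rewriting this as $a_j(1-b_{jj})=\sum_{i\neq j}a_ib_{ij}$ and using minimality, $1-b_{jj}$ cannot be a unit, so $b_{jj}\notin\mathfrak{m}$. Similarly, minimality forces the off-diagonal $b_{ij}\in\mathfrak{m}$. Hence $B=(b_{ij})\equiv$ an invertible diagonal matrix modulo $\mathfrak{m}$, so $B$ is invertible. It follows that $e'_i=\pi(e_i)$ for $i\le n$, together with the $e_j$ for $j>n$, form a new basis of $F$. So $G=\bigoplus_{i\le n} Re'_i\subseteq P$ is free and a summand of $F$, hence a summand of $P$, and $x=\sum a_ie'_i\in G$. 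I expect the careful minimality bookkeeping here to be the main obstacle.

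\emph{Step 3.} Let $P$ be countably generated by $x_1,x_2,\dots$. Apply the lemma to get $P=G_1\oplus P_1$ with $G_1$ free and $x_1\in G_1$. Since $P_1$ is projective, apply the lemma again to the $P_1$-component of $x_2$, obtaining $P_1=G_2\oplus P_2$. Iterating gives $x_1,\dots,x_k\in G_1\oplus\dots\oplus G_k$ for every $k$, so $P=\bigoplus_k G_k$ is free. Combined with Step 1, every projective $R$-module is a direct sum of free modules, hence free.
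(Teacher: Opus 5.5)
Your proof is correct. It takes a genuinely different route from the paper, which gives no proof of its own and only cites Matsumura's Theorem 2.5.

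Matsumura's argument there covers only finitely generated projectives and defers the general case to Kaplansky. It runs as follows. Take a minimal generating set $u_1,\dots,u_n$ of $P$ and the surjection $R^n\to P$ with kernel $K$. This surjection splits, so $K$ is finitely generated and $R^n/\mathfrak{m}R^n\cong P/\mathfrak{m}P\oplus K/\mathfrak{m}K$. Both $R^n/\mathfrak{m}R^n$ and $P/\mathfrak{m}P$ have dimension $n$ by Theorem~\ref{thm:matsumura2.3}, hence $K=\mathfrak{m}K$, and Nakayama gives $K=0$.

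That short argument is all the paper actually uses. Every application (to the $V_i$, to $W\cap V_i$, to $S/\gamma^{k_i}S$) concerns direct summands of finitely generated free modules.

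You instead reproduce Kaplansky's full proof, which is what the statement as literally written requires. Without finite generation Nakayama is unavailable, and your key lemma together with the countable reduction replaces it. Your minimality bookkeeping in Step~2 is sound; the relations $\sum_i a_i(b_{ij}-\delta_{ij})=0$ in fact force $B\equiv I \pmod{\mathfrak{m}}$.

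One assertion in Step~1 is left unjustified: that $P_\alpha$ is a summand of $P_{\alpha+1}$. It is immediate. Let $\rho$ be the projection of $F_{\alpha+1}$ onto $F_\alpha$ along the newly added $E_j$, and $\pi$ the projection of $F$ onto $P$ along $Q$. Since $\pi(F_\alpha)=F_\alpha\cap P=P_\alpha$, the map $\pi\circ\rho$ restricts to a retraction of $P_{\alpha+1}$ onto $P_\alpha$.
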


	\begin{lemma}\cite[Lemma II.2]{mcdonald1978similarity}\label{lem:minimalGenSetIsLinIndep} 		
		Let $(R,\mathfrak{m})$ be a local ring and $F$ a finitely generated free $R$-module. If $\{x_1,\dots,x_n\}$ is a minimal generating set of $F,$ then it is linearly independent.
	\end{lemma}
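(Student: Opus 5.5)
The statement to prove is Lemma~\ref{lem:minimalGenSetIsLinIndep}. Let $(R,\mathfrak{m})$ be local, $F$ free of finite rank, and $\{x_1,\dots,x_n\}$ a minimal generating set of $F$. We must show that $\sum_i r_i x_i = 0$ forces every $r_i = 0$.

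The plan is to compare the given minimal generating set with a basis of $F$ and to use Theorem~\ref{thm:matsumura2.3}(iii). Fix a basis $\{e_1,\dots,e_m\}$ of $F$. A basis is a minimal generating set: by linear independence, no $e_j$ lies in the span of the others. So Theorem~\ref{thm:matsumura2.3}(ii) gives $m = \dim_k F/\mathfrak{m}F = n$.

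Next, write $x_i = \sum_j a_{ij} e_j$ with $a_{ij} \in R$. Both $\{e_j\}$ and $\{x_i\}$ are minimal generating sets with $n$ elements, so Theorem~\ref{thm:matsumura2.3}(iii) says $\det(a_{ij})$ is a unit and $A = (a_{ij})$ is invertible.

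Now suppose $\sum_i r_i x_i = 0$. Substituting gives
\[
\sum_j \Big(\sum_i r_i a_{ij}\Big) e_j = 0 .
\]
Since the $e_j$ form a basis, every coefficient vanishes. In matrix form this reads $r^{T} A = 0$, where $r = (r_1,\dots,r_n)^{T}$. Multiplying on the right by $A^{-1}$ (or by the adjugate, then by $\det(A)^{-1}$) gives $r = 0$. Hence $\{x_1,\dots,x_n\}$ is linearly independent, and is in fact a basis of $F$.

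The only delicate point is the first step: checking that a basis is a \emph{minimal} generating set, so that (iii) applies. This rests on the basis elements being independent, which rules out any proper subset generating $F$. One could also note that $F/\mathfrak{m}F \cong k^m$ via the basis, which gives $m = n$ directly. Everything else is formal.
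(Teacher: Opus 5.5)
Your proof is correct. The paper gives no argument of its own for this lemma; it only cites McDonald's Lemma II.2. Your text therefore serves as a self-contained proof that rests solely on Theorem~\ref{thm:matsumura2.3}, which the paper states just before.

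The key steps all hold:
\begin{itemize}
\item Showing that a basis of $F$ is itself a minimal generating set, hence has $n=\dim_k F/\mathfrak{m}F$ elements, is exactly what lets you invoke Theorem~\ref{thm:matsumura2.3}(iii).
\item Once the transition matrix $A$ is invertible, the relation $r^{T}A=0$ forces $r=0$.
\item You read (iii) correctly as $v_i=\sum_j a_{ij}u_j$; the paper's display has the index typo $u_i$.
\item You avoid Corollary~\ref{cor:independenceTransfer}, which would be circular, since the paper derives it from this lemma.
\end{itemize}

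For comparison, there is another standard route. Let $\pi\colon R^n\to F$ send the standard basis to $x_1,\dots,x_n$. Since $F$ is projective, $\pi$ splits, so $R^n\cong F\oplus\ker\pi$, and $\ker\pi$ is finitely generated as a direct summand of $R^n$. Reducing modulo $\mathfrak{m}$ gives $\dim_k(\ker\pi/\mathfrak{m}\ker\pi)=n-n=0$, so $\ker\pi=0$ by Nakayama's lemma.

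That version uses Nakayama explicitly rather than through the determinant criterion of Theorem~\ref{thm:matsumura2.3}(iii). It needs no auxiliary basis of $F$ and works verbatim for finitely generated projective modules, so it also recovers Proposition~\ref{prop:projectiveFreeOverLocal} in the finitely generated case. Your determinant argument is more concrete, but it presupposes that $F$ already has a basis, which is of course part of the hypothesis here.
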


		Using Theorem \ref{thm:matsumura2.3}, Lemma \ref{lem:minimalGenSetIsLinIndep} and
		the incomplete basis theorem for vector spaces, an immediate corollary is that we can "transfer" linear independence from the residue field over to the ring. Namely we have the following:
	
	\begin{corollary}\label{cor:independenceTransfer}
		Let $(R,\mathfrak{m},k)$ be a local ring and $F$ a finitely generated free $R$-module. Let $u_1,\dots u_r$ be elements in $F.$ If $\{u_1 +\mathfrak{m}F,\dots,u_r +\mathfrak{m}F\}$ is linearly independent in $F/\mathfrak{m}F$ then $\{u_1,\dots,u_r\}$ is  linearly independent and can be completed into a basis of $F.$ 
	\end{corollary}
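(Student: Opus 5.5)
We work in the free module $F$ of rank $n$ and write $\mu : F \to F/\mathfrak{m}F$. The goal is to place the $u_i$ inside a minimal generating set of $F$; Lemma \ref{lem:minimalGenSetIsLinIndep} then gives linear independence, and the count of generators shows that set is a basis.

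First, by Theorem \ref{thm:matsumura2.3}, the quotient $F/\mathfrak{m}F \simeq k^n$ has dimension $n$. Any basis of $F$ reduces to a basis of $F/\mathfrak{m}F$, and minimal generating sets all have the same cardinality $\nu_R(F)$, so $\nu_R(F)=n$.

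Second, the residues $\mu(u_1),\dots,\mu(u_r)$ are linearly independent by hypothesis. The incomplete basis theorem for vector spaces over $k$ gives vectors that complete them to a basis of $F/\mathfrak{m}F$. These vectors lie in $F/\mathfrak{m}F$, so each lifts to an element $u_{r+1},\dots,u_n$ of $F$. Then $\{\mu(u_1),\dots,\mu(u_n)\}$ is a basis of $F/\mathfrak{m}F$, and Theorem \ref{thm:matsumura2.3}(i) shows that $\{u_1,\dots,u_n\}$ is a minimal generating set of $F$.

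Third, Lemma \ref{lem:minimalGenSetIsLinIndep} makes $\{u_1,\dots,u_n\}$ linearly independent. Since it also generates $F$, it is a basis of $F$. Its subset $\{u_1,\dots,u_r\}$ is therefore linearly independent and is completed to a basis by $u_{r+1},\dots,u_n$.

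One point needs care. The residues must be pairwise distinct, so that the set $\{u_1,\dots,u_n\}$ really has $n$ elements. This holds because the residues form a basis, hence are distinct, and so the $u_i$ are distinct too. No step here is a genuine obstacle; the substance is carried entirely by Lemma \ref{lem:minimalGenSetIsLinIndep}, which we are allowed to assume.
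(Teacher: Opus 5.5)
Your proof is correct and follows the route the paper indicates. The paper also completes the residues to a $k$-basis of $F/\mathfrak{m}F$ via the incomplete basis theorem, lifts them to a minimal generating set by Theorem \ref{thm:matsumura2.3}(i), and concludes with Lemma \ref{lem:minimalGenSetIsLinIndep} that this set is a basis of $F$ containing $u_1,\dots,u_r$.
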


	\section{Some useful results on Artinian local rings }\label{sec:artinianRings}
	
	Let $R$ be an Artinian local ring with maximal ideal $\mathfrak{m}.$ In this section, we present some results concerning polynomials over $R$ as well as finitely generated $R$-modules that will be useful throughout the paper.
	
	Let $f$ and $g$ be two polynomials in $R[x].$ $f$ is said to be \textit{regular} if it is not a zero divisor, which is the case if and only if it has a non-zero-divisor coefficient. It is said to be \textit{basic irreducible} if $\mu(f)$ is an irreducible polynomial in $k[x].$ $f$ is said to be \textit{square-free} if it is not divisible by the square of any non-constant polynomial in $R[x],$ moreover, it is called \textit{residually square-free} if $\mu(f)$ is square-free in $k[x].$ Finally, $f$ and $g$ are said to be coprime if $(f,g)=R[x].$
	As $R$ is Artinian, by Nakayama's Lemma $\mathfrak{m}$ is nilpotent, and is in fact the nilradical of $R.$ A ring  whose nilradical is a maximal ideal is called a \textit{completely primary ring}. Through a series of papers in the 50s, Snapper described the polynomial ideals over this type of rings. Namely the following result follows a characterization that appears on  \cite[Section 12]{snapperI}.

	\begin{lemma}(\cite[Section 12]{snapperI})\label{lem:regularMonic}
		\begin{itemize}
			\item[(a)] Let $f$ be a regular polynomial in $R[x].$ Then there exists a monic polynomial $g\in R[x]$ and a unit polynomial $u\in R[x]$ such that $f=ug.$
			\item[(b)] Let $I \unlhd R[x]$ be an ideal such that $\mu(I)\neq (0).$ There exists a monic polynomial $f\in I$ with least degree among the monic polynomials of $I$ such that $I=(f, N)$ where $N=I\cap \mathfrak{m}[x].$
			\end{itemize}
	\end{lemma}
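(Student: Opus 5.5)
We prove the two parts in turn, using that $\mathfrak{m}$ is nilpotent, say $\mathfrak{m}^t=0$, and that $\mathfrak{m}$ is the nilradical of $R$.

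For part (a), write $f=\sum_{i=0}^d a_ix^i$. Since $f$ is regular, some coefficient is a unit. In an Artinian local ring every element is either a unit or nilpotent, and elements of $\mathfrak{m}$ are zero divisors because $\mathfrak{m}$ annihilates $\mathfrak{m}^{t-1}\neq 0$. Let $s$ be the largest index with $a_s\notin\mathfrak{m}$, and split
\[
f=h+\varepsilon, \qquad h=\sum_{i\le s}a_ix^i,\qquad \varepsilon=\sum_{i>s}a_ix^i\in\mathfrak{m}[x].
\]
The polynomial $h$ has unit leading coefficient, so after scaling we may assume $h$ is monic of degree $s$. We then run a Hensel-type lifting. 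We seek a monic $g$ of degree $s$ and a unit $u$ with $f=ug$, where $u\equiv a_s \pmod{\mathfrak{m}}$ is a constant plus an element of $\mathfrak{m}[x]$. Any such $u$ is a unit of $R[x]$, since its non-constant part is nilpotent.

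The construction proceeds by induction on $j$, building $g_j$ monic of degree $s$ and $u_j$ such that $f\equiv u_jg_j \pmod{\mathfrak{m}^j[x]}$. To correct the error $e\in\mathfrak{m}^j[x]$, we divide $e$ by the monic $g_j$, which is possible over any ring, writing $e=qg_j+r$ with $\deg r<s$. We then set $u_{j+1}=u_j+q\,u_j^{-1}\cdot(\text{unit adjustment})$ and $g_{j+1}=g_j+u_j^{-1}r$, keeping $g_{j+1}$ monic because $\deg r<s$. The cross terms lie in $\mathfrak{m}^{2j}\subseteq\mathfrak{m}^{j+1}$. Stopping at $j=t$ gives $f=ug$ exactly. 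The main obstacle is this bookkeeping: one has to verify that $q$ and $r$ have coefficients in $\mathfrak{m}^j$, which holds because division by a monic polynomial is $R$-linear in the dividend.

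For part (b), $\mu(I)\neq 0$ means $I\not\subseteq\mathfrak{m}[x]$. So $I$ contains a polynomial $p$ with a unit coefficient, and such a $p$ is regular. By (a), $p=ug$ with $u$ a unit, hence $g=u^{-1}p\in I$ is monic. Therefore $I$ contains monic polynomials, and we choose $f\in I$ monic of least degree $d$.

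Clearly $(f,N)\subseteq I$. Conversely, take $p\in I$ and divide by $f$ to get $p=qf+r$ with $r\in I$ and $\deg r<d$. We claim $r\in\mathfrak{m}[x]$. Suppose not. Then $r$ is regular, and by (a) $r=u'g'$ with $g'$ monic and $g'\in I$. Since $u'\equiv c \pmod{\mathfrak{m}}$ with $c$ a unit constant, the element $u'$ has degree $0$ modulo $\mathfrak{m}$. It remains to show $\deg g'\le\deg r$.

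This is the delicate point, since $u'$ may have nilpotent higher terms. The fix is to extract the bound from the construction in (a): $\deg g'$ equals the last unit-coefficient index $s$ of $r$, and $s\le\deg r<d$. This contradicts the minimality of $d$. Hence $r\in I\cap\mathfrak{m}[x]=N$, and so $I=(f,N)$.
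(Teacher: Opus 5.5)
The paper gives no proof of this lemma; it cites Snapper. Your route, a Hensel/Newton-type lifting for (a) followed by division by a monic element of least degree for (b), is the standard self-contained argument, and part (b) is correct. However, the inductive step of (a) is wrong as written and needs a one-line repair.

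The update $g_{j+1}=g_j+u_j^{-1}r$ need not be monic of degree $s$. The unit $u_j$ becomes non-constant as soon as some quotient $q$ is non-constant; already $f=x+\gamma x^2$ gives $u_2=1+\gamma x$. After that, $u_j^{-1}r$ can have nonzero nilpotent coefficients in degrees $\ge s$.

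For instance, take $k[\epsilon]/(\epsilon^4)$ with $\operatorname{char}k\neq 2$ and $f=\epsilon x^2+x+1$. Your scheme gives $g_2=x+1+\epsilon$, $u_2=1+\epsilon(x-1)$ and $r_2=2\epsilon^2+\epsilon^3$. Then $g_2+u_2^{-1}r_2$ has leading coefficient $1-2\epsilon^3$, so it is not monic.

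This matters because the next division step needs $g_j$ to be monic of degree $s$, and so does your degree count in (b). The fix is to use the constant $a_s$ instead of $u_j$; note that $a_s\equiv u_j \pmod{\mathfrak m}$ for every $j$, since $u_1=a_s$ and $g_1=a_s^{-1}h$. Set
\[
g_{j+1}=g_j+a_s^{-1}r,\qquad u_{j+1}=u_j+q .
\]
Then $g_{j+1}$ is monic of degree $s$, and $u_{j+1}\equiv a_s \pmod{\mathfrak m}$ because $q\in\mathfrak m^j[x]$. Moreover
\[
u_{j+1}g_{j+1}-f=a_s^{-1}(u_j-a_s)\,r+a_s^{-1}q\,r\in\mathfrak m^{j+1}[x],
\]
since $u_j-a_s\in\mathfrak m[x]$ and $q,r\in\mathfrak m^j[x]$. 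No ``unit adjustment'' is needed.

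Finally, the ``delicate point'' in (b) is not delicate, and it does not depend on how (a) was proved. From $r=u'g'$ you get $\mu(r)=\mu(u')\mu(g')$ with $\mu(u')\in k^\times$. Hence $\deg g'=\deg\mu(g')=\deg\mu(r)\le\deg r<d$ for any such factorization.
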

	
		Using the previous result, it is easy to see that over an Artinian local ring, we have Euclidean division provided that the divisor is regular.
	\begin{lemma}(Euclidean division)\label{lem:EuclideanDiv}
		Let $f$ and $g$ be two nonzero polynomials in $R[x].$ If $g$ is regular then there exist polynomials $q$ and $r$ in $R[x]$ with $f=qg+r$ and $\deg(r)<\deg(g).$
	\end{lemma}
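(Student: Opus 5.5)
The plan is to reduce to division by a monic polynomial using Lemma \ref{lem:regularMonic}(a), and then carry out ordinary long division, which works over any commutative ring when the leading coefficient of the divisor is $1$.

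First, since $g$ is regular, Lemma \ref{lem:regularMonic}(a) gives a monic polynomial $h\in R[x]$ and a unit $u\in R[x]$ with $g=uh$. For a monic divisor $h$, the standard induction on $\deg f$ gives $q_0,r$ with $f=q_0h+r$ and $\deg r<\deg h$. If $\deg f<\deg h$, take $q_0=0$ and $r=f$. Otherwise, let $a$ be the leading coefficient of $f$ and replace $f$ by $f-ax^{\deg f-\deg h}h$. This has strictly smaller degree, because $h$ is monic, so the leading terms cancel and no invertibility of coefficients is needed. Then $f=q_0h+r=(q_0u^{-1})g+r$, and we set $q=q_0u^{-1}\in R[x]$.

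The remaining point, and the only real obstacle, is to compare $\deg h$ with $\deg g$: we need $\deg r<\deg g$, and we only have $\deg r<\deg h$. It therefore suffices to show $\deg h\le \deg g$.

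The plan is to use the structure of units in $R[x]$ over an Artinian local ring. A polynomial $u$ is a unit if and only if its constant term is a unit and its other coefficients lie in the nilradical $\mathfrak{m}$. Applying the projection $\mu$ then gives $\mu(g)=\mu(u)\mu(h)$, where $\mu(u)$ is a nonzero constant in $k$ and $\mu(h)$ is monic of degree $\deg h$. Hence
\[
\deg h=\deg\mu(h)=\deg\mu(g)\le\deg g .
\]
This needs $\mu(g)\neq 0$. That holds because a regular polynomial has a non-zero-divisor coefficient, and over an Artinian local ring a non-zero-divisor is a unit, since every element of $\mathfrak{m}$ is nilpotent.

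Thus $\deg r<\deg h\le\deg g$, which completes the argument. The hypothesis $f\neq 0$ is only notational: if $r$ happens to be $0$, it is read as having degree $<\deg g$ in the usual convention.
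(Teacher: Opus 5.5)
Your proof is correct and takes the route the paper indicates. The paper gives no argument beyond saying the lemma follows from Lemma \ref{lem:regularMonic}(a): write $g=uh$ with $h$ monic and $u$ a unit, then divide by $h$. You supply the check $\deg h\le\deg g$ that the paper leaves implicit; it can be done more directly, since $h$ monic gives $\deg g=\deg(uh)=\deg u+\deg h$, so your separate justification that $\mu(g)\neq 0$ is not needed.
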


	\begin{proposition}\label{prop:projOfMinIsUnique}
		Let $I \unlhd R[x]$ be an ideal such that $\mu(I)\neq (0),$ and let $f,g \in I$ be two monic polynomials of least degree among the monic polynomials in $I.$ We have then $f-g\in I\cap\mathfrak{m}[x].$ In particular $\mu(f)=\mu(g).$
	\end{proposition}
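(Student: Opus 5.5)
The plan is to compare $f$ and $g$ through their difference and then use the minimality of the degree. Let $d$ be the common degree of $f$ and $g$, which is the least degree of a monic polynomial in $I$. Put $h=f-g\in I$. Since $f$ and $g$ are both monic of degree $d$, the leading terms cancel, so either $h=0$ or $\deg(h)<d$. If $h=0$ we are done, so assume $h\neq 0$. The goal is to show that every coefficient of $h$ lies in $\mathfrak{m}$, that is, $h\in I\cap\mathfrak{m}[x]$.

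Suppose, for contradiction, that $h\notin \mathfrak{m}[x]$. Then $h$ has some coefficient outside $\mathfrak{m}$, and that coefficient is a unit since $R$ is local. A unit is a non-zero-divisor, so $h$ is regular. By Lemma \ref{lem:regularMonic}(a) we can write $h=uq$ with $q$ monic and $u$ a unit of $R[x]$. Then $q=u^{-1}h\in I$, so $q$ is a monic polynomial in $I$.

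The step needing care is bounding $\deg(q)$, because multiplying by a unit polynomial could in principle change the degree. To handle this, reduce modulo $\mathfrak{m}$. A unit of $R[x]$ maps to a unit of $k[x]$, so $\mu(u)$ is a nonzero constant. Hence
\[
\deg(q)=\deg\mu(q)=\deg\mu(h)\le \deg(h)<d,
\]
where the first equality holds because $q$ is monic. This gives a monic polynomial in $I$ of degree less than $d$, contradicting the minimality of $d$. Therefore $h\in\mathfrak{m}[x]$, so $f-g\in I\cap\mathfrak{m}[x]$, and applying $\mu$ gives $\mu(f)=\mu(g)$.
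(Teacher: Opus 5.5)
Your proof is correct, and it takes a different route from the paper's.

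The paper uses part (b) of Lemma \ref{lem:regularMonic}. It writes $I=(f_0,N)$ with $f_0$ monic of least degree and $N=I\cap\mathfrak{m}[x]$. It obtains $f-g=r$ with $\deg(r)<\deg(f_0)$, via a Euclidean division whose quotient is forced to be $1$, and then writes $r=\alpha f_0+\beta$ with $\beta\in N$. It argues that $\alpha\notin\mathfrak{m}[x]$ would give $r$ a term of degree at least $\deg(f_0)$. Hence $\alpha\in\mathfrak{m}[x]$ and $r\in N$.

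You use part (a) instead. If $f-g$ had a unit coefficient, it would be regular, hence a unit polynomial times a monic $q\in I$. Reducing modulo $\mathfrak{m}$, where units of $R[x]$ become nonzero constants, gives $\deg(q)=\deg\mu(f-g)<d$, which contradicts minimality directly.

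Your version needs neither the generator description of $I$ nor Euclidean division; you correctly note that $f-g$ already has degree below $d$. It also makes the degree bookkeeping explicit through $\mu$. The paper's step ``$r$ has a term of degree at least $\deg(f_0)$'' silently relies on the same reduction: the unit leading coefficient of $\mu(\alpha)\mu(f_0)$ cannot be cancelled by $\beta\in\mathfrak{m}[x]$.

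The paper's route, in exchange, stays within the structural description $I=(f_0,N)$. It never has to manufacture a new monic element of $I$ from a regular one, and it places $f-g$ directly in the distinguished component $N$. Both arguments ultimately rest on Snapper's results, just on different parts of Lemma \ref{lem:regularMonic}.
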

	\begin{proof}
		Following Lemma \ref{lem:regularMonic}, let $I=(f_0,N)$ with $f_0$ monic, $\deg(f_0)=\deg(f)$ and $N=I\cap \mathfrak{m}[x].$ By Euclidean division, we have $f=qg+r$ with $q,r \in R[x]$ and $\deg(r) < \deg(g).$ But as $f$ and $g$ are monic with the same degree, they have the same leading term, hence $q=1$ and $f-g=r \in I.$ Let $\alpha \in R[x]$ and $\beta \in N$ such that $r=\alpha f_0 + \beta.$ If $\alpha \notin \mathfrak{m}[x],$ then the sum $r=\alpha f_0 + \beta$  has a term of degree at least $\deg(f_0),$ but as $\deg(r) < \deg(g)=\deg(f_0)$ we must have $\alpha \in \mathfrak{m}[x],$ and thus $f-g=r \in N = I\cap \mathfrak{m}[x].$		
	\end{proof}

	It is a well-known fact that an Artinian local ring is complete. We have therefore, by \cite[Theorem 8.3]{matsumura1989commutative}, $R$ is a Henselian ring, i.e. a local ring in which Hensel's lemma holds.
		
	\begin{proposition}(Hensel's lemma)\label{prop:hensel}
		Let $f(x) \in R[x]$ be a monic polynomial and let  $\mu(f)=\prod^r_{i=1} g_i,$ where $g_1 ,\dots, g_r \in k[x]$ are monic, pairwise coprime. 	Then there exist monic, pairwise coprime polynomials $f_1 , \dots, f_r \in R[x]$ such that $f=\prod^r_{i=1} f_i$	and $\mu(f_i) = g_i$ for $1 \leq i \leq r$.		
	\end{proposition}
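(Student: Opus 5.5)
We prove Hensel's lemma by lifting the factorization step by step along the powers of the nilpotent maximal ideal, then induct on $r$. (Alternatively, one can simply invoke that $R$ is complete and hence Henselian, \cite[Theorem 8.3]{matsumura1989commutative}; the plan below gives a self-contained argument.)

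\textbf{Reduction to $r=2$.} Suppose the case $r=2$ is known. Write $\mu(f)=g_1\cdot(g_2\cdots g_r)$. The factors $g_1$ and $g_2\cdots g_r$ are coprime in $k[x]$, since $g_1$ is coprime to each $g_i$ with $i\ge 2$. The case $r=2$ gives $f=f_1h$ with $f_1,h$ monic, $\mu(f_1)=g_1$, $\mu(h)=g_2\cdots g_r$. Induction applied to $h$ then splits it as $f_2\cdots f_r$. For pairwise coprimality in $R[x]$, we use the following fact: if $\mu(a)$ and $\mu(b)$ are coprime, then $a$ and $b$ are coprime. Indeed, choose $u,v$ with $ua+vb=1+m$ where $m\in\mathfrak{m}[x]$. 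Since $\mathfrak{m}$ is nilpotent, $m$ is nilpotent, so $1+m$ is a unit and $(a,b)=R[x]$.

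\textbf{Case $r=2$, lifting.} Since $\mathfrak{m}$ is nilpotent, fix $N$ with $\mathfrak{m}^N=0$. We construct by induction on $j\ge 1$ monic polynomials $a_j,b_j\in R[x]$ with $\deg a_j=\deg g_1$, $\deg b_j=\deg g_2$, $\mu(a_j)=g_1$, $\mu(b_j)=g_2$, and $f-a_jb_j\in\mathfrak{m}^j[x]$.

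For $j=1$, lift $g_1,g_2$ to monic polynomials of the same degrees. Then $\mu(f-a_1b_1)=0$, so $f-a_1b_1\in\mathfrak{m}[x]$.

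For the inductive step, set $e=f-a_jb_j\in\mathfrak{m}^j[x]$. Since $f$ and $a_jb_j$ are monic of the same degree, $\deg e<\deg f$. Fix $s,t\in R[x]$ lifting a Bézout relation $\mu(s)g_1+\mu(t)g_2=1$. Then $sa_j+tb_j=1+c$ with $c\in\mathfrak{m}[x]$. Hence
\[
e(sa_j+tb_j)=e+ec,\qquad ec\in\mathfrak{m}^{j+1}[x].
\]
Divide $te$ by the monic polynomial $a_j$ using Lemma \ref{lem:EuclideanDiv}: $te=qa_j+\rho$ with $\deg\rho<\deg a_j$. Then put
\[
a_{j+1}=a_j+\rho,\qquad b_{j+1}=b_j+\sigma,\qquad \sigma=se+qb_j.
\]

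\textbf{Main obstacle.} The delicate step is to check that the correction terms stay in $\mathfrak{m}^j[x]$ and that the degrees are preserved, so that $a_{j+1}$ and $b_{j+1}$ remain monic with the same reductions.

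First, $\rho\in\mathfrak{m}^j[x]$. Division by a monic polynomial is compatible with reduction modulo the ideal $\mathfrak{m}^j[x]$: since $te\equiv 0$ modulo $\mathfrak{m}^j[x]$ and division by a monic polynomial is unique, both $q$ and $\rho$ lie in $\mathfrak{m}^j[x]$. Consequently $\sigma\in\mathfrak{m}^j[x]$ as well.

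Next, the degrees. We have $\deg\rho<\deg a_j$. Also
\[
\sigma a_j=sea_j+qa_jb_j\equiv e-\rho b_j \pmod{\mathfrak{m}^{j+1}[x]},
\]
using $sea_j+teb_j=e+ec$ and $te=qa_j+\rho$. We want $\deg\sigma<\deg b_j$. Working modulo $\mathfrak{m}^{j+1}[x]$, the right-hand side $e-\rho b_j$ has degree less than $\deg a_j+\deg b_j$. Since $a_j$ is monic, $\deg(\sigma a_j)=\deg\sigma+\deg a_j$ in any ring. So $\sigma$ may be replaced by its truncation modulo $\mathfrak{m}^{j+1}[x]$, which has degree $<\deg b_j$. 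Concretely, the leading coefficients of $\sigma$ in degrees $\ge \deg b_j$ lie in $\mathfrak{m}^{j+1}$, and we drop them.

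Finally,
\[
f-a_{j+1}b_{j+1}=e-\rho b_j-\sigma a_j-\rho\sigma\in\mathfrak{m}^{j+1}[x],
\]
because $e-\rho b_j-\sigma a_j\in\mathfrak{m}^{j+1}[x]$ by the congruence above and $\rho\sigma\in\mathfrak{m}^{2j}[x]\subseteq\mathfrak{m}^{j+1}[x]$.

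\textbf{Conclusion.} At $j=N$ we get $f=a_Nb_N$ exactly. The factors $a_N,b_N$ are monic with $\mu(a_N)=g_1$, $\mu(b_N)=g_2$, and they are coprime by the fact established in the reduction to $r=2$. Together with the induction on $r$, this proves the statement.
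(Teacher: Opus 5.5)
Your proof is correct, but it takes a different route from the paper. The paper proves nothing at this point: it only remarks that an Artinian local ring is complete and cites \cite[Theorem 8.3]{matsumura1989commutative} to conclude that $R$ is Henselian.

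You instead give a self-contained argument:
\begin{itemize}
\item reduction to $r=2$ by induction on $r$;
\item lifting a factorization modulo $\mathfrak{m}^j[x]$ to one modulo $\mathfrak{m}^{j+1}[x]$, using a lifted B\'ezout identity and division by the monic $a_j$;
\item controlling the degree of the correction $\sigma$ by truncation, which is legitimate because the discarded coefficients lie in $\mathfrak{m}^{j+1}$, so both $\sigma\in\mathfrak{m}^j[x]$ and the congruence $\sigma a_j\equiv e-\rho b_j \pmod{\mathfrak{m}^{j+1}[x]}$ survive;
\item termination because $\mathfrak{m}^N=0$.
\end{itemize}

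Your route uses only the nilpotency of $\mathfrak{m}$, with no completion, no inverse limits and no general theory of Henselian rings, and the construction is a finite process. It also proves the pairwise coprimality of the lifts explicitly, through the unit $1+m$ with $m\in\mathfrak{m}[x]$. That is precisely one direction of Lemma \ref{lem:coprimeIffCoprime}, which the paper imports separately from \cite{NortonSalagean2}.

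The paper's citation is shorter and places the statement in its natural generality, all complete local rings, at the cost of deferring all the actual content to the literature.
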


	Although we do not have unique factorization in $R[x]$ in general, the next proposition shows that residually square-free polynomials behave favorably in this regard.
	
	\begin{proposition}\label{prop:sqFreeUniqIrrFacto}
		A residually square-free monic polynomial in $R[x]$ factors uniquely into monic, pairwise coprime basic irreducible polynomials. 
	\end{proposition}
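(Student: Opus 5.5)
Existence comes from Hensel's lemma (Proposition \ref{prop:hensel}), and uniqueness from the coprimality of the Hensel factors together with Euclidean division (Lemma \ref{lem:EuclideanDiv}).

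\textbf{Existence.} Let $f\in R[x]$ be monic with $\mu(f)$ square-free. Factor $\mu(f)=\prod_{i=1}^r g_i$ in $k[x]$ into monic irreducibles. Since $\mu(f)$ is square-free, the $g_i$ are pairwise distinct, hence pairwise coprime. Proposition \ref{prop:hensel} then gives monic, pairwise coprime $f_1,\dots,f_r\in R[x]$ with $f=\prod f_i$ and $\mu(f_i)=g_i$. Each $f_i$ is therefore basic irreducible.

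\textbf{Uniqueness.} Suppose $f=\prod_{j=1}^s h_j$ is a second factorization into monic, pairwise coprime, basic irreducible polynomials. Applying $\mu$ gives $\mu(f)=\prod\mu(h_j)$, a factorization into monic irreducibles in the UFD $k[x]$. Hence $s=r$ and, after reindexing, $\mu(h_i)=g_i=\mu(f_i)$. It remains to show $h_i=f_i$.

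First, $f_i$ and $h_j$ are coprime for $j\ne i$. Indeed, $\mu(f_i)$ and $\mu(h_j)$ are coprime in $k[x]$, so $af_i+bh_j=1+m$ with $m\in\mathfrak m[x]$. Since $\mathfrak m$ is nilpotent, $1+m$ is a unit of $R[x]$, so $(f_i,h_j)=R[x]$. Consequently $f_i$ is coprime to $H_i:=\prod_{j\ne i}h_j$, since a product of elements comaximal with $f_i$ is comaximal with $f_i$.

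Now $f_i$ divides $f=h_iH_i$. Write $uf_i+vH_i=1$ and multiply by $h_i$: $h_i=uf_ih_i+vf$. Since $f_i\mid f$, this shows $f_i\mid h_i$. Symmetrically, $h_i\mid f_i$, using that $h_i$ is coprime to $\prod_{j\ne i}f_j$.

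Write $h_i=qf_i$. Both polynomials are monic and $\deg h_i=\deg\mu(h_i)=\deg g_i=\deg f_i$. Comparing leading coefficients, which is legitimate because $f_i$ is monic, gives $\deg q=0$ and $q=1$, so $h_i=f_i$. (The Euclidean division of Lemma \ref{lem:EuclideanDiv} also delivers this, since dividing by a monic polynomial is exact.)

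\textbf{Main obstacle.} The delicate step is the lifting of coprimality from $k[x]$ to $R[x]$, where we need nilpotence of $\mathfrak m$ to make $1+m$ a unit. We also have to keep track of degrees in the final comparison: in $R[x]$, non-monic factors could have unit leading coefficient issues, and monicity is what rules this out.
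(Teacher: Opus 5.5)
Your proof is correct and takes the paper's route: existence from Hensel's lemma, and uniqueness by the McDonald-style argument that the paper only cites (Theorem XIII.11). You write that argument out in full: you match residues via unique factorization in $k[x]$, lift coprimality (this is exactly Lemma~\ref{lem:coprimeIffCoprime}), and conclude $h_i=f_i$ from $f_i\mid h_i$ with both polynomials monic of equal degree. The symmetric divisibility $h_i\mid f_i$ is therefore not actually needed.
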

	\begin{proof}
		The existence of such a decomposition follows directly from Hensel's lemma. The proof of the uniqueness is similar to that given in \cite[Theorem XIII.11]{mcdonald1974finite} for finite local rings.
	\end{proof}
	
	The next lemma is given in \cite[Lemma 2.8]{NortonSalagean2} for the case of finite chain rings. The proof is the same for Artinian local rings.
	\begin{lemma}\label{lem:coprimeIffCoprime}  Let $f$ and $g$ be two regular
		polynomials in $R[x]$. Then $f$ and $g$ are coprime  if and only if $\mu(f)$ and $\mu(g)$ are coprime polynomials in $k[x]$.
	\end{lemma}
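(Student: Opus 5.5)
The forward direction is immediate from the ring map $\mu$. If $f$ and $g$ are coprime, there are $a,b\in R[x]$ with $af+bg=1$. Applying the surjective ring morphism $\mu:R[x]\to k[x]$ gives $\mu(a)\mu(f)+\mu(b)\mu(g)=1$, so $\mu(f)$ and $\mu(g)$ are coprime in $k[x]$. This direction does not even use regularity.

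For the converse, first reduce to monic polynomials. By Lemma \ref{lem:regularMonic}(a), write $f=uf'$ and $g=vg'$, where $u,v$ are unit polynomials and $f',g'$ are monic. Then $(f,g)=(f',g')$. Moreover $\mu(u),\mu(v)$ are units of $k[x]$, hence nonzero constants, so $\mu(f'),\mu(g')$ are coprime whenever $\mu(f),\mu(g)$ are. So we may assume $f$ and $g$ are monic.

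Suppose $\mu(f),\mu(g)$ are coprime. Lift a Bézout relation in $k[x]$ to obtain $a,b\in R[x]$ with $af+bg=1-h$ and $h\in\mathfrak{m}[x]$. Since $\mathfrak{m}$ is nilpotent (Nakayama, $R$ Artinian), say $\mathfrak{m}^t=0$, we get $h^t\in\mathfrak{m}^t[x]=0$. Hence $1-h$ is a unit of $R[x]$ with inverse $1+h+\dots+h^{t-1}$. Therefore $1=(1-h)^{-1}(af+bg)\in(f,g)$, and $f,g$ are coprime.

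The only point needing care is that the lift $1-h$ is genuinely a unit of $R[x]$. This rests on the nilpotence of $\mathfrak{m}$: in a general local ring, $1+\mathfrak{m}[x]$ need not consist of units. Regularity is used only to justify the reduction to monic polynomials; the nilpotent-unit argument would in fact work without it.
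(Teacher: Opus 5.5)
Your proof is correct and is essentially the standard argument the paper imports from the finite chain ring case. For one direction you push a B\'ezout identity through $\mu$; for the other you lift a B\'ezout identity from $k[x]$ and use that $1+\mathfrak{m}[x]$ consists of units of $R[x]$ because $\mathfrak{m}$ is nilpotent. Your reduction to monic polynomials is harmless but superfluous, and you are right that regularity plays no role in the argument.
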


	While no additional condition is required on a local ring  for the generating property of a subset to be transferred via $\mu$ and vice-versa as seen in Theorem \ref{thm:matsumura2.3}, the transfer of linear independence needs more. It turns out, as we will see in Proposition \ref{prop:steinitz}, that for a local ring to have this linear independence transfer property, it needs to satisfy a sort of incomplete basis property.	
	A ring $S$ is said to be \textit{weakly semi-Steinitz},  if every linearly independent finite subset of a finitely generated free $S$-module $F$ can be extended to a basis of $F.$
	See \cite{nashier1991steinitz,chwe1970extension} for more details about weakly semi-Steinitz and  Steinitz rings in general. In particular, \cite[Corollary 2.5.]{nashier1991steinitz} states that a commutative Noetherian ring is weakly semi-Steinitz if and only if every non-zero-divisor is a unit, hence it is clear that an Artinian local ring is weakly semi-Steinitz. As a consequence, we have the following straightforward properties.
	
	\begin{proposition}\label{prop:steinitz}
		Let $F$ be a finitely generated free $R$-module.  We have the following:
		\begin{itemize}
			\item[(i)] If $\{w_1,\dots,w_r\}$ is a linearly independent subset of $F,$ then $\{w_1+\mathfrak{m}F,\dots,w_r+\mathfrak{m}F\}$ is linearly independent in $F/\mathfrak{m}F.$
			\item[(ii)] Each linearly independent subset of $F$ with cardinality $\operatorname{rank}_R(F)$ is a basis of $F.$
			\item[(iii)] For any free submodule $V$ of $F,$ $V$ is a direct summand of $F.$
		\end{itemize}
	\end{proposition}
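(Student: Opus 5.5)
The plan is to derive all three items from the weakly semi-Steinitz property of $R$, recalled just above from \cite[Corollary 2.5.]{nashier1991steinitz}, together with Theorem \ref{thm:matsumura2.3} and Lemma \ref{lem:minimalGenSetIsLinIndep}. Throughout, let $n=\operatorname{rank}_R(F)$, so that $F/\mathfrak{m}F\simeq k^n$.

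For (i), we take a linearly independent subset $\{w_1,\dots,w_r\}$ of $F$. Since $R$ is weakly semi-Steinitz, it extends to a basis $\{w_1,\dots,w_r,w_{r+1},\dots,w_n\}$ of $F$; the extension has exactly $n$ elements because all bases of a free module over a commutative ring have the same cardinality. A basis is in particular a minimal generating set of size $n=\nu_R(F)$. By Theorem \ref{thm:matsumura2.3}(ii), the images $w_i+\mathfrak{m}F$ then form a basis of the $n$-dimensional $k$-space $F/\mathfrak{m}F$, so any subfamily is linearly independent. This gives (i).

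For (ii), we take a linearly independent set of cardinality $n$. By the Steinitz property it extends to a basis. That basis has exactly $n$ elements, so the extension adds nothing and the set is already a basis. Alternatively, (i) shows its image is a linearly independent set of $n$ vectors in $k^n$, hence a basis; Theorem \ref{thm:matsumura2.3}(i) then makes the set generating, and combined with independence it is a basis.

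For (iii), let $V$ be a free submodule with basis $\{v_1,\dots,v_r\}$; this basis is finite. Indeed, any linearly independent subset of $F$ has independent image in $F/\mathfrak{m}F$ by (i), so it has at most $n$ elements, and hence $V$ is finitely generated. We extend $\{v_1,\dots,v_r\}$ to a basis $\{v_1,\dots,v_r,u_1,\dots,u_{n-r}\}$ of $F$. Setting $W=\langle u_1,\dots,u_{n-r}\rangle$, we get $F=V\oplus W$. The only mildly delicate point is this finiteness of the basis of $V$, which is handled by (i); otherwise everything is a direct application of the Steinitz property.
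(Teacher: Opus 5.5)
Your proposal is correct and takes essentially the paper's route: the paper gives no detailed argument and presents (i)--(iii) as straightforward consequences of $R$ being weakly semi-Steinitz (via \cite[Corollary 2.5.]{nashier1991steinitz}). Your extension-to-a-basis arguments, together with Theorem~\ref{thm:matsumura2.3} and invariance of rank, supply exactly those details, including the finiteness of a basis of $V$ in (iii), which the paper leaves implicit.
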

	
		\begin{remark}
		Let $F$ be a finitely generated free $R$-module and $M$ a free submodule of $F.$ $\mu(M)$ may refer to two  objects: the image of $M$ under the projection $\mu:F \longrightarrow F/\mathfrak{m}F,$ or $M/\mathfrak{m}M$ the image of the induced projection $\mu:M \longrightarrow M/\mathfrak{m}M$ as in diagram \ref{eq:diagram}. However, as $R$ is weakly semi-Steinitz, from  Proposition \ref{prop:steinitz}(iii) $M$ is a direct summand of $F,$ hence the natural map $M/\mathfrak{m}M \rightarrow F/\mathfrak{m}F$ is injective and the two objects coincide.
	\end{remark}

		\begin{proposition}\label{prop:charaDividesChara}
		Let $F$ be a finitely generated free $R$-module and $\varphi \in \text{End}_R(F).$ Let $W\le F$ be a free submodule invariant under $\varphi.$ Then $\rchi_{\varphi_{\restriction W}}$ divides $\rchi_\varphi.$ In particular, if $F=W\oplus W'$ such that $W'$ is also $\varphi$-invariant, then  $\rchi_\varphi=\rchi_{\varphi_{\restriction W}}\rchi_{\varphi_{\restriction W'}}.$
		
	\end{proposition}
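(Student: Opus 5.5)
By Proposition \ref{prop:steinitz}(iii), the free submodule $W$ is a direct summand of $F$, so we can write $F=W\oplus W''$ for some submodule $W''$. Since $W''\cong F/W$ is a direct summand of a free module over a local ring, it is projective, hence free by Proposition \ref{prop:projectiveFreeOverLocal}. We then choose a basis $\{w_1,\dots,w_r\}$ of $W$ and a basis $\{w_{r+1},\dots,w_n\}$ of $W''$. Their union is a basis $\mathcal{B}$ of $F$, because $F=W\oplus W''$ and the ranks add up.

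Next we write the matrix of $\varphi$ in $\mathcal{B}$. Because $W$ is $\varphi$-invariant, each $\varphi(w_j)$ with $j\le r$ lies in the span of $w_1,\dots,w_r$. The matrix is therefore block upper triangular,
$$M=\begin{bmatrix} A & B\\ 0 & C\end{bmatrix},$$
where $A\in M_r(R)$ is the matrix of $\varphi_{\restriction W}$ in the basis $\{w_1,\dots,w_r\}$ and $C\in M_{n-r}(R)$. The characteristic polynomial is independent of the chosen basis, since similar matrices have the same characteristic polynomial. Hence $\rchi_\varphi=\det(xI_n-M)$.

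The matrix $xI_n-M$ is block upper triangular over the commutative ring $R[x]$. The determinant of a block triangular matrix over any commutative ring is the product of the determinants of the diagonal blocks; this follows, for instance, by Laplace expansion along the first columns or directly from the Leibniz formula. Therefore
$$\rchi_\varphi=\det(xI_r-A)\det(xI_{n-r}-C)=\rchi_{\varphi_{\restriction W}}\cdot\det(xI_{n-r}-C),$$
which proves that $\rchi_{\varphi_{\restriction W}}$ divides $\rchi_\varphi$.

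For the particular case, we take $W''=W'$ and assume $W'$ is $\varphi$-invariant. We must first check that $W'$ is free, so that $\rchi_{\varphi_{\restriction W'}}$ makes sense; this holds by the same projective-implies-free argument as above. The block $B$ then vanishes, and $C$ is exactly the matrix of $\varphi_{\restriction W'}$ in the basis $\{w_{r+1},\dots,w_n\}$. This gives $\rchi_\varphi=\rchi_{\varphi_{\restriction W}}\rchi_{\varphi_{\restriction W'}}$.

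The only step that needs care is the existence of a free complement. The rest is standard linear algebra over a commutative ring, and that step is settled by Proposition \ref{prop:steinitz}(iii) together with Proposition \ref{prop:projectiveFreeOverLocal}.
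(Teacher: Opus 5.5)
Your proof is correct and follows essentially the same route as the paper. You take a complement of $W$ via Proposition~\ref{prop:steinitz}(iii), choose bases to obtain a block upper triangular matrix, and factor the determinant, noting that the off-diagonal block vanishes when the complement is invariant. Your one addition is to state explicitly, via Proposition~\ref{prop:projectiveFreeOverLocal}, that the complement is free, which the paper leaves implicit.
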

	
	\begin{proof}
		As $R$ is weakly semi-Steinitz, from  Proposition \ref{prop:steinitz}(iii), $W$ is a direct summand of $F.$ Let $F=W\oplus W'$ where $W'$ is some free submodule of $F.$ Let $\mathcal{B}$ and $\mathcal{B'}$ be bases of $W$ and $W'$ respectively. As $W$ is $\varphi$-invariant, $\varphi$ is represented in the ordered basis $\mathcal{B}\cup \mathcal{B'}$ as a matrix $A=\begin{pmatrix}
			B & C \\
			0 & D
		\end{pmatrix},$ with $B$ the matrix of $\varphi_{\restriction W}$ in $\mathcal{B}.$ It follows that $det(xI-A)=det(xI-B)det(xI-D),$ and thus $\rchi_{\varphi_{\restriction W}}$ divides $\rchi_\varphi$ (note that $I$ denotes different identity matrices in the previous by abuse of notation). Moreover, if $W'$ is also $\varphi$-invariant, then $C=0$ and $D$ is the matrix of $\varphi_{\restriction W'}$ in $\mathcal{B'}.$ Hence $det(xI-D)=\rchi_{\varphi_{\restriction W'}}$   and $\rchi_\varphi=\rchi_{\varphi_{\restriction W}}\rchi_{\varphi_{\restriction W'}}.$		
	\end{proof}

	\section{Finitely generated free modules over an SPIR}\label{sec:SPIR}
	
	In this section,  we describe the structure of submodules of a finitely generated free module over an Artinian local principal ideal ring. We start with some useful results concerning the structure of these rings. For a detailed discussion about SPIRs see \cite[Volume 1, Chapter 4, Section 15]{zariski1960commutative}.	
	Let $R$ be an SPIR with maximal ideal $\mathfrak{m},$ and let $\gamma\in R$ such that $\mathfrak{m}=(\gamma).$ As $R$ is Artinian,  the ideal $\mathfrak{m}$ is nilpotent. Let $\nu\in \mathbb{N}^*$ denote the nilpotency index of $\mathfrak{m}.$ The ideals of $R$ form a strictly increasing chain 
	\begin{equation}\label{eq:idealChain}
		(0)=\mathfrak{m}^\nu \subsetneq\mathfrak{m}^{\nu-1}\subsetneq\dots \subsetneq \mathfrak{m} \subsetneq R\,.
	\end{equation}

	\begin{lemma}\label{lem:uniqueExpressionSPIR}
		Let $F$ be a finitely generated free $R$-module. For any element $w \in F \setminus \{0\},$ there is a unique integer $s$ and an element $w'\in F\setminus \mathfrak{m}F$ such that $w=\gamma^sw',$ where $0\leq s\leq \nu-1.$
	\end{lemma}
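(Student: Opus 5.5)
We work in a fixed basis $\{e_1,\dots,e_n\}$ of $F$, so that $w=\sum_i a_ie_i$ with $a_i\in R$. The plan is to reduce the whole statement to a statement about single elements of $R$, and then reassemble.

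\textbf{Scalars.} We first show that every nonzero $a\in R$ can be written as $a=\gamma^t u$ with $u$ a unit and $0\le t\le \nu-1$, where $t$ is unique. By the chain \eqref{eq:idealChain}, the ideal $(a)$ equals $\mathfrak{m}^t=(\gamma^t)$ for a unique $t$ with $0\le t\le\nu-1$. Hence $a=\gamma^t u$ for some $u\in R$. If $u\in\mathfrak{m}$, then $a\in\mathfrak{m}^{t+1}$, so $\mathfrak{m}^t=(a)\subseteq\mathfrak{m}^{t+1}$, contradicting strictness of the chain. Therefore $u\notin\mathfrak{m}$, and $u$ is a unit since $R$ is local.

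\textbf{Existence.} Put $t_i$ for the exponent of each nonzero $a_i$, and let $s=\min_i t_i$. Then $s\le\nu-1$. Set
\[
w'=\sum_i \gamma^{t_i-s}u_ie_i ,
\]
summing over the nonzero $a_i$. Clearly $w=\gamma^s w'$. For an index $i$ with $t_i=s$, the $e_i$-coefficient of $w'$ is the unit $u_i$, so this coefficient lies outside $\mathfrak{m}$. Since $\mathfrak{m}F=\{\sum b_ie_i : b_i\in\mathfrak{m}\}$, it follows that $w'\notin\mathfrak{m}F$.

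\textbf{Uniqueness of $s$.} This is the only step needing care, and it is the main obstacle: $w'$ itself is not unique, so we must pin down $s$ intrinsically. We do this by showing that $s$ is the largest $j$ with $w\in\mathfrak{m}^jF$.

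Suppose $w=\gamma^s w'$ with $w'\notin\mathfrak{m}F$. Then $w\in\mathfrak{m}^sF$ directly. Now suppose also $w\in\mathfrak{m}^{s+1}F$, and let $c$ be a coefficient of $w'$ that is a unit. The corresponding coefficient of $w$ is $\gamma^s c$, and it must lie in $\mathfrak{m}^{s+1}$. Multiplying by $c^{-1}$ gives $\gamma^s\in\mathfrak{m}^{s+1}$, so $\mathfrak{m}^s\subseteq\mathfrak{m}^{s+1}$. This contradicts strictness of the chain (note $s\le\nu-1$, so $\mathfrak{m}^s\ne 0$).

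Hence $s=\max\{j : w\in\mathfrak{m}^jF\}$. This quantity depends only on $w$, which proves that $s$ is unique.

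Only $s$ is claimed to be unique. Indeed, $w'$ is determined only modulo the annihilator of $\gamma^s$, which is $\mathfrak{m}^{\nu-s}$.
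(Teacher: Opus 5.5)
Your proof is correct and follows the same route as the paper: expand $w$ in a basis, write each nonzero coordinate as a unit times $\gamma^{s_i}$, and take $s=\min_i s_i$. The only differences are that you derive the scalar factorization directly from the ideal chain (the paper cites an external result for it), and that you prove the uniqueness of $s$ explicitly via the basis-free characterization $s=\max\{j : w\in\mathfrak{m}^jF\}$, which the paper leaves implicit.
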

	\begin{proof}
		Let $n\in \mathbb{N}$ such that $F\simeq_R R^n,$ and let $(w_1,\dots,w_n) \in R^n\setminus \{0\}$ be the vector corresponding to $w$ in $R^n.$
		For each $1\leq i\leq n$ such that $w_i\neq 0,$  by \cite[Proposition 2.2]{norton2001strong}, there exists a unique $s_i$ and a unit $u_i\in R$ such that $w_i=u_i\gamma^{s_i}.$ The result follows then by  taking $s$ to be $\min_{1\leq i\leq n}\{s_i\}.$
	\end{proof}

	\begin{lemma}\label{lem:uniqueExpressionNorton2000} 
		Let $F$ be a finitely generated free $R$-module.  Then for each $w\in F,$ if $1\leq i\leq j\leq \nu$ and $\gamma^iw\in \mathfrak{m}^jF$ then $w\in \mathfrak{m}^{j-i}F.$ 
	\end{lemma}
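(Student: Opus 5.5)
The plan is to reduce to coordinates and then to a statement about elements of $R$. Fix an isomorphism $F\simeq_R R^n$ and write $w=(w_1,\dots,w_n)$. Because $\mathfrak{m}^jF$ corresponds to $(\mathfrak{m}^j)^n$, the hypothesis $\gamma^iw\in\mathfrak{m}^jF$ means $\gamma^iw_l\in\mathfrak{m}^j$ for each $l$. Likewise, the conclusion $w\in\mathfrak{m}^{j-i}F$ means $w_l\in\mathfrak{m}^{j-i}$ for each $l$. So it suffices to prove the claim for a single element $a\in R$: if $1\le i\le j\le\nu$ and $\gamma^ia\in\mathfrak{m}^j$, then $a\in\mathfrak{m}^{j-i}$.

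For the scalar claim, the case $a=0$ is trivial. Otherwise, by \cite[Proposition 2.2]{norton2001strong} (used in the proof of Lemma \ref{lem:uniqueExpressionSPIR}), write $a=u\gamma^s$ with $u$ a unit and $0\le s\le\nu-1$ unique. Then $\gamma^ia=u\gamma^{i+s}$.

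If $i+s\ge\nu$, then $s\ge\nu-i\ge j-i$, because $j\le\nu$. Hence $a\in\mathfrak{m}^s\subseteq\mathfrak{m}^{j-i}$.

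If $i+s<\nu$, then $\gamma^ia=u\gamma^{i+s}$ is a unit times $\gamma^{i+s}$ with $0\le i+s\le\nu-1$. So $(\gamma^ia)=\mathfrak{m}^{i+s}$, and this ideal is nonzero. The hypothesis gives $\mathfrak{m}^{i+s}\subseteq\mathfrak{m}^j$. Since the chain \eqref{eq:idealChain} is strictly decreasing, this forces $i+s\ge j$, that is, $s\ge j-i$, so again $a\in\mathfrak{m}^{j-i}$.

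The main point needing care is this second case: passing from the containment of ideals to the inequality on exponents. It relies on the strictness of the chain \eqref{eq:idealChain}, equivalently on $\gamma^t\notin\mathfrak{m}^{t+1}$ for $t<\nu$, which is exactly what the uniqueness of $s$ in the cited proposition encodes. Once that is in place, the rest is bookkeeping, and reassembling the coordinates gives $w\in\mathfrak{m}^{j-i}F$.
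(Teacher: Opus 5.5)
Your proof is correct, including the boundary case $j=\nu$: there the branch $i+s<\nu$ cannot occur.

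The paper gives no argument of its own for this lemma. It only says the statement follows as in the finite-chain-ring case of \cite[Section 2.3]{NortonSalagean2}. You instead reduce coordinatewise to a claim about $R$, then use the normal form $a=u\gamma^s$ and the strictness of the chain \eqref{eq:idealChain}. This gives a complete proof directly for SPIRs. It uses the same tools the paper uses in its proof of Lemma \ref{lem:uniqueExpressionSPIR}, so it fits naturally into the paper.

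Your version also makes explicit what the citation leaves implicit. Finiteness of $R$, which the cited finite-chain-ring setting assumes, is never used. Only two facts are needed: every nonzero element is a unit times a unique power $\gamma^s$ with $s<\nu$, and $\mathfrak{m}^\nu=0$. Both hold in any SPIR.
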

	\begin{proof}
	Follows as in the case of finite chain rings presented in \cite[Section 2.3]{NortonSalagean2}.
	\end{proof}

	The next result was given in  \cite[3.1 Lemma]{1317117} for the case of finite chain rings. We present it here for SPIRs with a shorter proof.
	\begin{proposition}\label{prop:quotientByIrrIsSPIR}
		
		Let $f\in R[x]$ be a basic irreducible polynomial. Then $R[x]/(f)$ is an SPIR, with nilpotency index $\nu,$ and ideals 
		\begin{equation}
			(0)\subsetneq(\gamma^{\nu-1} +(f))\subsetneq\dots \subsetneq (\gamma + (f))\subsetneq R[x]/(f)\,.
		\end{equation}
	\end{proposition}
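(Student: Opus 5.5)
Set $S=R[x]/(f)$. We may take $f$ monic: a basic irreducible $f$ is regular, since $\mu(f)\neq 0$ gives it a unit coefficient, so by Lemma~\ref{lem:regularMonic}(a) it is a unit times a monic polynomial, and the quotient is unchanged. The strategy is to show that $S$ is local with maximal ideal generated by the image $\bar\gamma$ of $\gamma$, that $\bar\gamma$ is nilpotent of index exactly $\nu$, and that every element of $S$ is a unit times a power of $\bar\gamma$. These facts together give the principal ideal ring structure and the displayed chain.

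\textbf{The maximal ideal.} Since $\mathfrak{m}=(\gamma)$, we have
\[
S/\bar\gamma S\simeq R[x]/(\gamma,f)\simeq k[x]/(\mu(f)),
\]
which is a field because $\mu(f)$ is irreducible. So $\mathfrak{n}=\bar\gamma S$ is a maximal ideal, and it is nilpotent since $\gamma^\nu=0$. Any maximal ideal of $S$ contains the nilradical, hence contains $\mathfrak{n}$, hence equals $\mathfrak{n}$. Thus $S$ is local with principal maximal ideal $\mathfrak{n}$. Moreover $S$ is a free $R$-module of rank $\deg f$, with basis $1,\bar x,\dots,\bar x^{\deg f-1}$ by Euclidean division (Lemma~\ref{lem:EuclideanDiv}). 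So $S$ is finitely generated over the Artinian ring $R$ and is therefore Artinian; hence it has Krull dimension $0$ and $\mathfrak{n}$ is its unique prime.

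\textbf{Every ideal is a power of $\mathfrak{n}$.} Take $0\neq a\in S$. Viewing $S\simeq R^{\deg f}$ as a free $R$-module, Lemma~\ref{lem:uniqueExpressionSPIR} gives $a=\gamma^s a'$ with $a'\notin \mathfrak{m}S$ and $0\le s\le\nu-1$. Then $a'\notin \mathfrak{n}$, so $a'$ is a unit of the local ring $S$, and $aS=\bar\gamma^sS$. Since every element generates some $\bar\gamma^sS$, and these ideals form a chain, an arbitrary ideal $J$ equals $\bar\gamma^{s_0}S$, where $s_0$ is the least exponent attained by elements of $J$. Hence $S$ is a PIR, and so an SPIR, and its ideals are exactly the $\bar\gamma^sS$.

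\textbf{Strictness of the chain and nilpotency index $\nu$.} This is the step needing care. Using freeness of $S$ over $R$ with the basis $1,\bar x,\dots$, we get $\bar\gamma^sS=\gamma^s S\simeq(\gamma^sR)^{\deg f}$ as $R$-modules. Since $\gamma^{s}R\supsetneq\gamma^{s+1}R$ for $s<\nu$ by the chain~\eqref{eq:idealChain}, the ideals $\bar\gamma^sS$ are strictly decreasing for $0\le s\le\nu$, and $\bar\gamma^{\nu-1}\neq0$ while $\bar\gamma^\nu=0$. This gives the displayed chain and nilpotency index $\nu$. The main point to verify is that $\gamma^sS$ really corresponds to $(\gamma^sR)^{\deg f}$ in coordinates, which follows because multiplication by $\gamma^s$ acts coordinatewise on the $R$-basis.
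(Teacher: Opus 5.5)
Your proof is correct. It shares the paper's skeleton: factor an element as $\gamma^s$ times something with a unit coefficient, show that cofactor is a unit of $S=R[x]/(f)$, and conclude that every ideal is some $\bar\gamma^{s}S$. In the paper the element being factored is the remainder $r_i$ modulo $f$ of each generator of an ideal $I\supseteq(f)$.

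The genuine difference is in how you show the cofactor is invertible.
\begin{itemize}
\item The paper argues polynomial by polynomial. $\mu(r_i^*)$ is nonzero of degree $<\deg f$, hence coprime to the irreducible $\mu(f)$. Lemma~\ref{lem:coprimeIffCoprime} then lifts this to coprimality of $r_i^*$ and $f$ in $R[x]$.
\item You instead show once and for all that $S$ is local with maximal ideal $\bar\gamma S$. This follows because $S/\bar\gamma S\simeq k[x]/(\mu(f))$ is a field and $\bar\gamma$ is nilpotent. Hence anything outside $\bar\gamma S=\mathfrak{m}S$ is automatically a unit.
\end{itemize}

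What each route buys:
\begin{itemize}
\item Your route avoids the coprimality-transfer lemma, which the paper imports from the finite chain ring setting, and it makes the unique-prime property explicit.
\item The paper's route stays inside $R[x]$ and exhibits the generator of each ideal $I\supseteq(f)$ directly from generators of $I$.
\end{itemize}

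Your strictness argument via the $R$-basis $1,\bar x,\dots,\bar x^{\deg f-1}$ is also more explicit than the paper's one-line appeal to $R\hookrightarrow R[x]/(f)$. That appeal tacitly uses two things: that $f$ is monic of positive degree, and that $\bar\gamma^{\nu-1}\neq 0$ forces $\bar\gamma^{s}S\neq\bar\gamma^{s+1}S$ for $s<\nu$ in a local ring with principal maximal ideal.

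One small point: Lemma~\ref{lem:EuclideanDiv} only gives that the $\bar x^i$ span $S$. Their $R$-linear independence needs the further observation that a nonzero multiple of the monic $f$ has degree at least $\deg f$. This is immediate.
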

	\begin{proof}
		By Lemma \ref{lem:regularMonic}, we may assume $f$ is monic \textit{w.l.o.g}.
		Let $I \unlhd R[x]$ be an ideal containing $f,$ and let $f_1,\dots,f_q$ be such that $I=(f_1,\dots,f_q).$ For all $i,$ let $r_i\in R[x]$ be the rest of the Euclidean division of $f_i$ by $f.$ If $r_i=0$ let $\lambda_i=\nu.$ Otherwise, let  $\lambda_i \in \mathbb{N}$ and $r^*_i \in R[x]$ regular such that $r_i=\gamma^{\lambda_i}r^*_i$ and $\text{deg}(r_i)=\text{deg}(r^*_i).$ As $\mu(f)$ is basic irreducible and $\deg(\mu(r^*_i))<\deg(f),$ $\mu(r^*_i)$ and $\mu(f)$ are coprime, hence by Lemma \ref{lem:coprimeIffCoprime}, $r^*_i$ and $f$ are coprime, which means that $r^*_i$ is a unit modulo $f.$ Hence, $I/(f)=(\gamma^{\lambda_1}+(f),\dots,\gamma^{\lambda_q}+(f))=(\gamma^{min\{\lambda_i\}_i}+(f)).$ Furthermore, as $R$ can naturally be embedded in $R[x]/(f),$ the nilpotency index cannot be less than $\nu,$ which implies that it equals $\nu$ and that the ideal chain is strict.
	\end{proof}
	
	As an immediate corollary, we get the following result which is well known in the context of finite chain rings (see for example \cite[Corollary 4.6]{NortonSalagean2} or \cite[3.7 Corollary]{1317117}).
	\begin{corollary}\label{cor:quotientRingIsPIR}		
		Let $f\in R[x]$ be a residually square-free polynomial. Then $R[x]/(f)$ is a principal ideal ring.
	\end{corollary}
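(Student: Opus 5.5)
Since $f$ is residually square-free, it is regular: $\mu(f)\neq 0$, so some coefficient of $f$ lies outside $\mathfrak{m}$ and is a unit. By Lemma \ref{lem:regularMonic}(a) we may write $f=ug$ with $u$ a unit of $R[x]$ and $g$ monic. Then $(f)=(g)$ and $\mu(g)=\mu(u)^{-1}\mu(f)$ is still square-free. So we may assume without loss of generality that $f$ is monic. (If $\mu(f)$ is a nonzero constant, $f$ is a unit and $R[x]/(f)=0$; we take this degenerate case as trivial or excluded.)

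By Proposition \ref{prop:sqFreeUniqIrrFacto}, or directly by Hensel's lemma (Proposition \ref{prop:hensel}) applied to the factorization of $\mu(f)$ into distinct monic irreducibles, we write $f=f_1\cdots f_r$. Here the $f_i$ are monic, basic irreducible and pairwise coprime in $R[x]$. Coprimality means $(f_i,f_j)=R[x]$ for $i\neq j$, which is exactly comaximality. Hence, by the Chinese Remainder Theorem,
\[
R[x]/(f)\;\simeq\;\prod_{i=1}^r R[x]/(f_i).
\]

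Each factor $R[x]/(f_i)$ is an SPIR by Proposition \ref{prop:quotientByIrrIsSPIR}, and in particular a principal ideal ring. A finite direct product of principal ideal rings is again a principal ideal ring. Indeed, every ideal of $\prod_i S_i$ has the form $\prod_i J_i$ with $J_i\unlhd S_i$: multiply by the idempotents $e_i$ to see this. If $J_i=(a_i)$, then $\prod_i J_i$ is generated by $(a_1,\dots,a_r)$. This concludes the argument.

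There is no serious obstacle here. The only points needing care are the reduction to the monic case, so that Hensel's lemma applies, and checking that the coprimality produced by Hensel's lemma is comaximality in $R[x]$, which is the paper's definition and what the Chinese Remainder Theorem requires. If one only had residual coprimality, Lemma \ref{lem:coprimeIffCoprime} would convert it, since monic polynomials are regular.
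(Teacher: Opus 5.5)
Your proof is correct and follows the route the paper intends. The paper states this result without proof, as an immediate corollary of Proposition \ref{prop:quotientByIrrIsSPIR}: reduce to monic $f$, factor it into pairwise coprime basic irreducibles by Hensel's lemma (Proposition \ref{prop:hensel}, or Proposition \ref{prop:sqFreeUniqIrrFacto}), apply the Chinese Remainder Theorem, and note that a finite product of SPIRs is a PIR — your added care with regularity, the degenerate unit case, and comaximality only makes explicit what the paper leaves implicit.
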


	We now investigate the structure of the submodules of a finitely generated free $R$-module. The next lemma is an adaptation  of  \cite[Lemma 1, p. 246]{zariski1960commutative} to the SPIR case.
	
		\begin{lemma}(\cite[Lemma 1, p. 246]{zariski1960commutative})\label{lem:numMinGenSubModSPIR} Let $M$ be a finitely generated $R$-module and $N$ a submodule of $M.$ Then we have $N$ is finitely generated and $\nu_R(N)\leq \nu_R(M).$
	\end{lemma}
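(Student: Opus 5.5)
We argue by induction on $n=\nu_R(M)$, which by Theorem \ref{thm:matsumura2.3} is the common size of all minimal generating sets of $M$.

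\emph{Reduction to free modules.} Choose a minimal generating set $\{u_1,\dots,u_n\}$ of $M$ and the surjection $\pi:R^n\to M$ sending $e_i\mapsto u_i$. Put $N'=\pi^{-1}(N)$. If $N'$ is generated by at most $n$ elements, then so is $N=\pi(N')$. So it suffices to prove the statement when $M=R^n$ is free. In that case finite generation is automatic because $R$ is Noetherian; the content is the bound $\nu_R(N)\le n$.

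\emph{Induction on $n$.} For $n=1$, $N$ is an ideal of $R$, and it is principal since $R$ is a PIR, so $\nu_R(N)\le 1$. For $n>1$, write $R^n=R^{n-1}\oplus R$ with projection $p$ onto the last coordinate. Then $p(N)$ is an ideal of $R$, so $p(N)=(a)$ for some $a$. Pick $w\in N$ with $p(w)=a$. For any $y\in N$ we have $p(y)=ra$ for some $r$, so $y-rw\in N\cap R^{n-1}$. Hence
\[
N=(N\cap R^{n-1})+Rw .
\]
By the induction hypothesis $N\cap R^{n-1}$ is generated by at most $n-1$ elements, so $N$ is generated by at most $n$ elements. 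Since $R$ is local, $\nu_R(N)$ is well defined and equals the size of any minimal generating set, which we may extract from this one; therefore $\nu_R(N)\le n$.

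\emph{Where the difficulty lies.} There is no real obstacle. The one point needing care is that the argument bounds the number of generators of some generating set, and we must transfer this to $\nu_R(N)$. That transfer is justified by Theorem \ref{thm:matsumura2.3}, because every generating set of $N$ contains a minimal one, and over a local ring all minimal generating sets have cardinality $\nu_R(N)$.

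Alternatively, one could use that every ideal of $R$ is principal together with $\dim_k N/\mathfrak{m}N$. The projection argument above is simpler and needs only that $R$ is a PIR.
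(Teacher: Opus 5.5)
Your proof is correct. The paper gives no argument of its own and only cites Zariski--Samuel. Your induction on the number of generators is the standard argument for that lemma: split off one coordinate, use that its image ideal is principal, and extract a minimal generating set at the end. It can also be run directly on $M$ by intersecting $N$ with the cyclic submodule $Ru_n$ and passing to $M/Ru_n$, so your reduction to $R^n$ is harmless but not needed.
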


	A matrix $D=(d_{i,j})\in M_{m\times n}(R)$ is called a diagonal matrix if
	$d_{i,j}=0$ whenever $i\neq j.$ Such a matrix can be expressed as
	$D=\operatorname{diag}(d_1,\ldots,d_r),
	$
	where \(r=\min\{m,n\}\) and \(d_i=d_{i,i}\) for
	\(i=1,\ldots,r\).	
	According to \cite[Theorem 15.24]{brown1993matrices}, for every matrix
	$A\in M_{m\times n}(R)$, there exist invertible matrices \(P\) and \(Q\)
	and a diagonal matrix
	$D=\operatorname{diag}(d_1,d_2,\ldots,d_r)$	such that $A=PDQ,$
	where the diagonal entries satisfy the divisibility relations
	$d_1\mid d_2\mid\cdots\mid d_r.$
	The elements \(d_1,d_2,\ldots,d_r\) are uniquely determined up to
	associates, and \(D\) is called the Smith normal form of \(A\). A version of the following theorem is given in \cite[Proposition 3.2]{kamche2019rank} for the case of finite PIRs.
	
	\begin{theorem}\label{thm:subModEnumFromAGivenBasis}
		Let $F$ be a finitely generated free module of rank $n$ and let $\mathcal{B}=\{u_1,\dots,u_n\}$ be a basis of $F.$ For each integer $1\leq m\leq n,$ denote $\mathcal{M}_m$ the set of submodules of $F$ with a minimal number of generators equals to $m.$ Then we have
		\begin{align*}
			\mathcal{M}_m=\left\{\langle\gamma^{k_1}\psi(u_1),\dots,\gamma^{k_m}\psi(u_m)\rangle ~|~ 0\leq k_1,\dots,k_m\leq \nu-1  \text{ and } \psi\in \operatorname{Aut}_R(F)\right\}.
		\end{align*} 		
	\end{theorem}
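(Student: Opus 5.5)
We prove the two inclusions separately. The direct argument goes through Smith normal form, after a small normalization. Write the right-hand side as $\mathcal{S}_m$.

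\emph{Inclusion $\mathcal{S}_m\subseteq\mathcal{M}_m$.} Let $N=\langle\gamma^{k_1}\psi(u_1),\dots,\gamma^{k_m}\psi(u_m)\rangle$. Since $\psi$ is an automorphism, $\{\psi(u_1),\dots,\psi(u_n)\}$ is a basis of $F$. Hence $N\simeq\bigoplus_{i=1}^m \gamma^{k_i}R$ as an internal direct sum. Each summand $\gamma^{k_i}R$ is cyclic and nonzero, because $k_i\le\nu-1$ and $\gamma^{\nu-1}\neq0$ by strictness of the chain (\ref{eq:idealChain}). So $\nu_R(\gamma^{k_i}R)=1$. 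By additivity of $\nu_R$ over direct sums (the consequence of Theorem \ref{thm:matsumura2.3}), we get $\nu_R(N)=m$.

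\emph{Inclusion $\mathcal{M}_m\subseteq\mathcal{S}_m$.} Let $N\le F$ with $\nu_R(N)=m$, and choose generators $w_1,\dots,w_m$ of $N$. Write $w_j=\sum_i a_{ij}u_i$ and set $A=(a_{ij})\in M_{n\times m}(R)$. By \cite[Theorem 15.24]{brown1993matrices}, $A=PDQ$ with $P\in GL_n(R)$, $Q\in GL_m(R)$ and $D=\operatorname{diag}(d_1,\dots,d_m)$.

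Right multiplication by the invertible $Q$ replaces the generating family $(w_j)$ by another generating family of the same $N$. So $N$ is generated by the columns of $PD$, namely $d_1v_1,\dots,d_mv_m$, where $v_i=\sum_l p_{li}u_l$ is the $i$-th column of $P$. Define $\psi\in\operatorname{Aut}_R(F)$ by $\psi(u_i)=v_i$; it is an automorphism because $P$ is invertible.

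By \cite[Proposition 2.2]{norton2001strong} (as in Lemma \ref{lem:uniqueExpressionSPIR}), each nonzero $d_i$ equals $\epsilon_i\gamma^{k_i}$ with $\epsilon_i$ a unit and $0\le k_i\le\nu-1$. Units can be dropped without changing the submodule generated. Thus $N=\langle \gamma^{k_1}\psi(u_1),\dots,\gamma^{k_m}\psi(u_m)\rangle$, provided no $d_i$ vanishes.

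\emph{Main obstacle: excluding $d_i=0$.} If $d_i=0$ for some $i$, then $N$ is generated by fewer than $m$ elements, contradicting $\nu_R(N)=m$. Hence all $d_i\neq0$ and $N\in\mathcal{S}_m$.

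The case $m\le n$ is automatic. Lemma \ref{lem:numMinGenSubModSPIR} gives $\nu_R(N)\le\nu_R(F)=n$, so $A$ has at most $n$ columns and $D$ has exactly $m$ diagonal slots.
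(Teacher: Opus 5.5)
Your proof is correct and follows the paper's argument for the hard inclusion: the same Smith normal form reduction, with coordinates written as columns so that $P$ rather than $Q$ produces $\psi$ (strictly, the identity you use is $AQ^{-1}=PD$), and with $d_i\neq 0$ forced by $\nu_R(N)=m$. The only difference is in the easy inclusion, where you apply additivity of $\nu_R$ to the internal direct sum $\bigoplus_i R\gamma^{k_i}\psi(u_i)$. The paper instead argues by contradiction that no generator is redundant, which implicitly needs the Nakayama fact that a non-minimal generating set over a local ring contains a redundant element; your version is slightly cleaner because it avoids that step.
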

	\begin{proof}
		For each automorphism $\psi \in  \operatorname{Aut}_R(F),$ $\{\psi(u_1),\dots,\psi(u_m)\}$ is a linearly independent set. Let $k_1,\dots,k_m \in\{0,\dots,\nu-1\}.$ Denote $\mathcal{E}$ the set $\{\gamma^{k_1}\psi(u_1),\dots,\gamma^{k_m}\psi(u_m)\}$ and $M=\langle \mathcal{E} \rangle.$  Suppose $\mathcal{E}$ is not a minimal generating set of $M.$		
		 \textit{W.l.o.g} we can then suppose that there exist $\alpha_2,\dots,\alpha_m \in R$ such that $\gamma^{k_1}\psi(u_1)=\sum_{i=2}^{m}$$\alpha_i \gamma^{k_i}\psi(u_i),$ but this implies that $\gamma^{k_1}=\alpha_2 \gamma^{k_2}=\dots=\alpha_m \gamma^{k_m}=0$ which cannot be as $k_1<\nu.$ Hence $M \in \mathcal{M}_m.$
		 
		 Conversely, let $M\in\mathcal{M}_m$ with a minimal generating set $\{v_1,\dots,v_m\}.$ Let $A\in M_{m\times n}(R)$ be the matrix whose rows are the coordinates of $v_1,\dots,v_m,$ in the basis $\mathcal{B}.$ By \cite[Theorem 15.24]{brown1993matrices}, let $D=\operatorname{diag}(d_1,d_2,\ldots,d_m)$ be the Smith normal from of $A,$ and $P\in GL_m(R)$ and $Q\in GL_n(R)$ such that $A=PDQ.$ Right-multiplying by $Q$ corresponds to changing the basis of $F,$ while left-multiplying by $P$ corresponds to changing the minimal generating set of $M.$
		 Hence there exists an automorphism $\psi'\in Aut_R(F)$ such that $\{d_1\psi'(u_1),\dots,d_m \psi'(u_m)\}$ is a minimal generating set of $M.$ But as $\nu_R(M)=m,$ that means that no $d_i$ is zero, and thus by Lemma \ref{lem:uniqueExpressionSPIR}, for all $1\leq i\leq m,$ there exist $0\leq k_i\leq \nu-1$ and a unit $d'_i\in R\setminus \mathfrak{m}$ such that $d_i=\gamma^{k_i}d'_i.$ Finally, it follows immediately that there exists an automorphism $\psi\in Aut_R(F)$ such that $\{\gamma^{k_1}\psi(u_1),\dots,\gamma^{k_m}\psi(u_m)\}$ is a minimal generating set of $M.$	\end{proof}

	For a type of submodules of a finitely generated free module, we can say more about their minimal generating sets.

	\begin{proposition}\label{prop:moduleChainHasSameMinGenNum}
		Let $F$ be a finitely generated free $R$-module with basis $\{u_1,\dots,u_n\}.$
		\begin{itemize}
			\item[(a)] For all $0\leq i\leq \nu-1,$ $\mathfrak{m}^iF/\mathfrak{m}^{i+1}F$ and $F/\mathfrak{m}F$ are isomorphic both as $R$-modules and as $k$-vector space.
			\item[(b)]  For all $0\leq i\leq \nu-1,$ the $R$-module $\mathfrak{m}^iF$ has a minimal generating set $\{\gamma^iu_1,\dots,\gamma^iu_n\}.$ In particular, we have 
			\begin{equation}
				\nu_R(\mathfrak{m}^{\nu-1}F)=\nu_R(\mathfrak{m}^{\nu-2}F)=\dots=\nu_R(\mathfrak{m}F)=\text{rank}_R(F).
			\end{equation}
		\end{itemize}
	\end{proposition}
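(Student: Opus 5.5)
We prove (b) first, since (a) follows easily from it. Fix $0\leq i\leq \nu-1$. Every element of $\mathfrak{m}^iF$ is a finite sum of terms $r w$ with $r\in\mathfrak{m}^i=(\gamma^i)$ and $w\in F$. Hence it can be written as $\gamma^i w'$ with $w'=\sum_j a_j u_j\in F$, that is, as $\sum_j a_j\gamma^i u_j$. So $\{\gamma^iu_1,\dots,\gamma^iu_n\}$ generates $\mathfrak{m}^iF$.

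For minimality we use Theorem \ref{thm:subModEnumFromAGivenBasis}. Take $\psi=\mathrm{id}_F$, $m=n$ and $k_1=\dots=k_n=i\leq\nu-1$. The first half of that proof then shows directly that $\{\gamma^iu_1,\dots,\gamma^iu_n\}$ is a minimal generating set of its span, which is $\mathfrak{m}^iF$. Spelled out: if some $\gamma^iu_1=\sum_{j\geq 2}\alpha_j\gamma^iu_j$, then linear independence of the $u_j$ gives $\gamma^i=0$. This contradicts $i<\nu$. By Theorem \ref{thm:matsumura2.3}, every minimal generating set has the same cardinality, so $\nu_R(\mathfrak{m}^iF)=n=\operatorname{rank}_R(F)$ for all such $i$, which is the displayed chain of equalities.

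For (a), consider the $R$-linear map $\theta_i:F\to\mathfrak{m}^iF/\mathfrak{m}^{i+1}F$ given by $w\mapsto\gamma^iw+\mathfrak{m}^{i+1}F$. It is surjective because every element of $\mathfrak{m}^iF$ has the form $\gamma^iw$, as shown above. Its kernel contains $\mathfrak{m}F$, since $\gamma^i\mathfrak{m}F=\mathfrak{m}^{i+1}F$.

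The main point is the reverse inclusion $\ker\theta_i\subseteq\mathfrak{m}F$. Suppose $\gamma^iw\in\mathfrak{m}^{i+1}F$. If $i\geq1$, apply Lemma \ref{lem:uniqueExpressionNorton2000} with the indices $i\leq j=i+1\leq\nu$; it gives $w\in\mathfrak{m}F$. If $i=0$ the statement is trivial.

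Therefore $\theta_i$ induces an $R$-isomorphism $F/\mathfrak{m}F\cong\mathfrak{m}^iF/\mathfrak{m}^{i+1}F$. Both modules are annihilated by $\mathfrak{m}$, so they are $k$-vector spaces, and an $R$-linear isomorphism between them is automatically $k$-linear. This gives the isomorphism as $k$-vector spaces too.

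The only nontrivial step is this kernel computation, and it is handled by Lemma \ref{lem:uniqueExpressionNorton2000}. If one prefers a direct argument, work in coordinates: write $w=\sum a_ju_j$. The condition $\gamma^ia_j\in\mathfrak{m}^{i+1}$ for each $j$ forces $a_j\in\mathfrak{m}$. Indeed, otherwise $a_j$ is a unit, so $\gamma^i\in(\gamma^{i+1})$, and the chain \eqref{eq:idealChain} is strict, which rules this out.
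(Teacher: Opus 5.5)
Your proof is correct. Part (a) is essentially the paper's argument. The paper writes down the inverse map $\mathfrak{m}^iF/\mathfrak{m}^{i+1}F\to F/\mathfrak{m}F$, $\gamma^iw+\mathfrak{m}^{i+1}F\mapsto w+\mathfrak{m}F$, and uses Lemma~\ref{lem:uniqueExpressionNorton2000} to show it is well defined. That is the same fact you use to compute $\ker\theta_i$. Your separate treatment of $i=0$, where that lemma does not formally apply, is a small improvement.

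Where you genuinely differ is (b). The paper derives (b) from (a): the isomorphism carries the $k$-basis $\{u_j+\mathfrak{m}F\}$ to $\{\gamma^iu_j+\mathfrak{m}^{i+1}F\}$, which it then lifts by Theorem~\ref{thm:matsumura2.3}(i). You instead prove irredundance directly from the linear independence of the $u_j$ and $\gamma^i\neq0$ (the argument in the first half of Theorem~\ref{thm:subModEnumFromAGivenBasis}), and then apply Theorem~\ref{thm:matsumura2.3}(ii).

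The paper's ordering makes (b) a short corollary of (a), with Lemma~\ref{lem:uniqueExpressionNorton2000} as the only real input. Your ordering shows that this lemma is not needed at all. Once (b) holds, Theorem~\ref{thm:matsumura2.3}(ii) gives $\dim_k(\mathfrak{m}^iF/\mathfrak{m}^{i+1}F)=n=\dim_k(F/\mathfrak{m}F)$. The map $F/\mathfrak{m}F\to\mathfrak{m}^iF/\mathfrak{m}^{i+1}F$ induced by your $\theta_i$ is already surjective, so it is an isomorphism of $k$-spaces, and hence of $R$-modules. So your opening remark that (a) follows easily from (b) is literally true, and the kernel computation you call the main point could be dropped.
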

	\begin{proof}
		\textit{(a)} Fix $0\leq i\leq \nu-1,$ and let the map $\theta : \mathfrak{m}^iF/\mathfrak{m}^{i+1}F \longrightarrow F/\mathfrak{m}F $ be such that for all $w\in F,$ $\theta(\gamma^iw+\mathfrak{m}^{i+1}F)=w+\mathfrak{m}F.$ $\theta$ is a well-defined $R$-linear map. Indeed, the linearity can be checked easily and for all $w_1,w_2\in F$ such that $\gamma^iw_1-\gamma^iw_2 \in \mathfrak{m}^{i+1}F,$ we have $\gamma^i(w_1-w_2)\in \mathfrak{m}^{i+1}F,$ hence by Lemma \ref{lem:uniqueExpressionNorton2000}, $w_1-w_2\in \mathfrak{m} F,$ and thus $\theta(\gamma^iw_1+\mathfrak{m}^{i+1}F)=\theta(\gamma^iw_2+\mathfrak{m}^{i+1}F).$
		Moreover, $\theta$ is clearly onto, and for all $w\in F,$ if $w\in \mathfrak{m}F$ then $\gamma^iw\in \mathfrak{m}^{i+1}F,$ hence $\theta$ is one-to-one and thus it is an $R$-module isomorphism. 	
		On the other hand, endowing $\mathfrak{m}^iF/\mathfrak{m}^{i+1}F$ with the $k$-vector space structure such that for all $\alpha \in k,$ if $\beta \in R$ is such that $\mu(\beta)=\alpha$ then for each $w\in \mathfrak{m}^iF/\mathfrak{m}^{i+1}F,$ $\alpha w=\beta w,$ we can easily verify that $\theta$ is a $k$-linear map w.r.t this structure, hence  $\theta$ is also an isomorphism between  $k$-vector spaces.
		
		\textit{(b)} By Theorem \ref{thm:matsumura2.3}, $\{u_1+\mathfrak{m}F,\dots,u_n+\mathfrak{m}F\}$ is a $k$-basis of $F/\mathfrak{m}F.$ Fix $0\leq i\leq\nu-1.$ From the proof of \textit{(a)} above, we see that  $\{\gamma^iu_1 + \mathfrak{m}^{i+1}F,\dots,\gamma^iu_n + \mathfrak{m}^{i+1}F\}$ is a basis of the $k$-vector space $\mathfrak{m}^iF/\mathfrak{m}^{i+1}F,$ hence by Theorem \ref{thm:matsumura2.3}, $\{\gamma^iu_1,\dots,\gamma^iu_n\}$ is a minimal generating set of $\mathfrak{m}^iF,$ and $\nu_R(\mathfrak{m}^iF)=\operatorname{rank}_R(F).$
			\end{proof}

	\begin{remark}
		Note that unlike the case of modules of the form $\mathfrak{m}^iF$ as in  Proposition \ref{prop:moduleChainHasSameMinGenNum}, where each basis of $F$ can be turned into a minimal generating set of $\mathfrak{m}^iF,$ for a general submodule $M\le F$ with $\nu_R(M)=n,$ Theorem \ref{thm:subModEnumFromAGivenBasis} only asserts that we can obtain a minimal generating set of $M$ from a given basis of $F.$ Take $R=\mathbb{Z}/8\mathbb{Z}$ for example, and let $\{e_1,e_2\}$ be the canonical basis of $R^2.$ Let $M=\langle 2(e_1+e_2),4e_2\rangle\le R^2$. We can easily verify that for all $0\leq i,j\leq 2,$ $M\neq \langle 2^ie_1, 2^je_2 \rangle.$
	\end{remark}

	\section{Endomorphisms with residually square-free annihilator polynomials }\label{sec:endWithSqFreeAnnihilator}
	
	Throughout the remainder of the paper, $R$ is an SPIR with maximal ideal $\mathfrak{m}=\gamma R,$ nilpotency index $\nu,$ and residue field $k=R/\mathfrak{m},$ $F$ is a finitely generated free $R$-module of rank $n\in \mathbb{N},$ and $\varphi\in\text{End}_R(F).$

		 Let $\lambda_\varphi \in N_\varphi$ denote a monic polynomial of least degree among the monic polynomials in 
		$N_\varphi.$ Following Proposition \ref{prop:projOfMinIsUnique}, we see that each monic polynomial of least degree in $N_\varphi$ has the same projection $\mu(\lambda_\varphi)$ in $k[x].$ It is easy to see that $\mu(\rchi_\varphi)=\rchi_{\mu(\varphi)}.$ On the other hand, denoting $\lambda_{\mu(\varphi)}$ the minimal polynomial of $\mu(\varphi),$ we have $\lambda_{\mu(\varphi)}$ divides $\mu(\lambda_\varphi)$ as the latter annihilates $\mu(\varphi),$ however, we do not have in general that  $\mu(\lambda_\varphi)=\lambda_{\mu(\varphi)},$ and this makes working with $\varphi$ quite difficult. To deal with this problem, McDonald introduced the concept of regular endomorphisms over SPIRs in \cite{mcdonald1978similarity} which was generalized later in \cite{guralnick1981similarity} for an arbitrary commutative local rings.
		 $\varphi$ is said to be \textit{regular} if $R[\varphi]$ is a free $R$-submodule of $\text{End}_R(F).$ The strength of this notion in the SPIR case, lies in the following proposition.

			\begin{proposition}(\cite[Theorems II.3 and II.4 ]{mcdonald1978similarity})\label{prop:regularCharacterization}
				The Following are equivalent: 
				\begin{itemize}
					\item[(i)] $\varphi$ is regular,
					\item[(ii)] $N_\varphi$ is a principal ideal generated by a monic polynomial,
					\item[(iii)] $\mu(\lambda_\varphi)=\lambda_{\mu(\varphi)}.$
					
				\end{itemize}
						
		\end{proposition}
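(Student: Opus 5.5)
The plan is to prove the cycle (ii)$\Rightarrow$(i)$\Rightarrow$(iii)$\Rightarrow$(ii). Throughout, let $e=\deg\lambda_{\mu(\varphi)}$. The central fact I would isolate first is this: the elements $1,\varphi,\dots,\varphi^{e-1}$ are $R$-linearly independent in $\text{End}_R(F)$ and can be completed to a basis of it. To see this, note that $\text{End}_R(F)\simeq M_n(R)$ is a finitely generated free $R$-module. The reduction $\text{End}_R(F)/\mathfrak{m}\text{End}_R(F)$ identifies with $\text{End}_k(F/\mathfrak{m}F)$, and under it the images of $1,\varphi,\dots,\varphi^{e-1}$ are $1,\mu(\varphi),\dots,\mu(\varphi)^{e-1}$. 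These are $k$-linearly independent by the definition of the minimal polynomial. Corollary \ref{cor:independenceTransfer} then gives the claim.

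\emph{(ii)$\Rightarrow$(i).} Suppose $N_\varphi=(g)$ with $g$ monic of degree $d$. Then $R[\varphi]\simeq R[x]/(g)$ as $R$-modules. By Lemma \ref{lem:EuclideanDiv}, every class has a unique representative of degree $<d$. Uniqueness holds because the difference of two such representatives is a multiple of the monic $g$ of degree $<d$, hence zero. Therefore $1,x,\dots,x^{d-1}$ is a free basis and $R[\varphi]$ is free, i.e.\ $\varphi$ is regular.

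\emph{(i)$\Rightarrow$(iii).} Suppose $R[\varphi]$ is free. By Proposition \ref{prop:steinitz}(iii) it is a direct summand of $\text{End}_R(F)$, so $R[\varphi]/\mathfrak{m}R[\varphi]$ embeds into $\text{End}_k(F/\mathfrak{m}F)$. Its image is exactly $k[\mu(\varphi)]$, which has $k$-basis $1,\mu(\varphi),\dots,\mu(\varphi)^{e-1}$. By Theorem \ref{thm:matsumura2.3}(i), $\{1,\varphi,\dots,\varphi^{e-1}\}$ is then a generating set of $R[\varphi]$. In particular $\varphi^e=\sum_{i<e}a_i\varphi^i$ for some $a_i\in R$, which produces a monic polynomial of degree $e$ in $N_\varphi$. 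Conversely, any monic $h\in N_\varphi$ satisfies $\mu(h)(\mu(\varphi))=0$, so $\deg h\geq e$. Hence $\deg\lambda_\varphi=e$. Now $\mu(\lambda_\varphi)$ is monic of degree $e$ and annihilates $\mu(\varphi)$, so it equals $\lambda_{\mu(\varphi)}$.

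\emph{(iii)$\Rightarrow$(ii).} From $\mu(\lambda_\varphi)=\lambda_{\mu(\varphi)}$ we get $\deg\lambda_\varphi=e$. Take any $g\in N_\varphi$ and apply Lemma \ref{lem:EuclideanDiv} to write $g=q\lambda_\varphi+r$ with $\deg r<e$. Then $r\in N_\varphi$, so $\sum_{i<e}r_i\varphi^i=0$. By the independence of $1,\varphi,\dots,\varphi^{e-1}$ established at the start, all $r_i$ vanish. Thus $N_\varphi=(\lambda_\varphi)$, which is principal and generated by a monic polynomial.

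The main obstacle is the identification used in the first paragraph and in (i)$\Rightarrow$(iii): that the reduction of $R[\varphi]$ modulo $\mathfrak{m}$ sits inside $\text{End}_k(F/\mathfrak{m}F)$ as $k[\mu(\varphi)]$. This needs the direct-summand property from Proposition \ref{prop:steinitz}(iii), together with the remark following that proposition, so that "$\mu$ of a submodule" is unambiguous. Once that is in place, every step reduces to Theorem \ref{thm:matsumura2.3} and Corollary \ref{cor:independenceTransfer}.
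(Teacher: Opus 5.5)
Your proof is correct. The paper gives no proof of this proposition; it imports it from McDonald [Theorems II.3 and II.4]. So what you have written is a self-contained derivation from the paper's Sections 2--3, not a variant of an argument in the text.

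The structure works:
\begin{itemize}
\item Your observation that $1,\varphi,\dots,\varphi^{e-1}$ are $R$-independent in $\operatorname{End}_R(F)$ for \emph{every} $\varphi$ (lifted from $k$ via Corollary \ref{cor:independenceTransfer}) turns (iii)$\Rightarrow$(ii) into a one-line division argument.
\item (ii)$\Rightarrow$(i) is just freeness of $R[x]/(g)$ for monic $g$.
\item (i)$\Rightarrow$(iii) is the only step that uses anything about $R$ beyond locality. There, freeness of $R[\varphi]$ must be converted into $R[\varphi]\cap\mathfrak{m}\operatorname{End}_R(F)=\mathfrak{m}R[\varphi]$, which is exactly what Proposition \ref{prop:steinitz}(iii) supplies.
\end{itemize}
Your first paragraph, by contrast, only needs $\operatorname{End}_R(F)/\mathfrak{m}\operatorname{End}_R(F)\simeq\operatorname{End}_k(F/\mathfrak{m}F)$. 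Your closing remark therefore slightly overstates where the direct-summand property enters.

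What your route buys over the citation is transparency about hypotheses. Principality of $\mathfrak{m}$ is never used, so the equivalence holds over any Artinian local ring. Moreover, (ii)$\Rightarrow$(i) and (iii)$\Rightarrow$(ii) hold over any local ring, while (i)$\Rightarrow$(iii) genuinely needs the weakly semi-Steinitz property. For example, over a DVR with uniformizer $\pi$, take $\varphi=\pi E_{12}$ on $R^2$, where $E_{12}$ is a matrix unit. Then $R[\varphi]$ is free and $N_\varphi=(x^2)$, yet $\mu(\lambda_\varphi)=x^2\neq x=\lambda_{\mu(\varphi)}$.
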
	
		
		It follows that if $\varphi$ is  regular, then $N_\varphi$ has a unique monic polynomial of least degree. We call it the \textit{minimal polynomial} of $\varphi$ and denote it $\lambda_\varphi.$ And so in this case we have $N_\varphi=(\lambda_\varphi).$
			
		\begin{proposition}\label{prop:kernelsProp}
				Suppose there exists a residually square-free polynomial $f\in N_\varphi,$ and let $f=\prod_{1\leq i\leq r}f_i$ be a decomposition of $f$ into  pairwise coprime basic irreducible polynomials. Denote $V_i=ker(f_i(\varphi))$ and $\overline{V_i}=ker(\mu(f_i)(\mu(\varphi)))$ for $i \in \{1,\dots,r\}.$ We have the following:
				\begin{itemize}
					\item[(i)] For each $i\in\{1,\dots,r\},$ $V_i$ is free and invariant under $\varphi,$ and $F=\bigoplus^{r}_{i=1} V_i.$ Moreover, if $N_\varphi=(f)$ then for $i=1,\dots,r,$  $\varphi_{|V_i}$ is regular with $N_{\varphi_{|V_i}}=(f_i).$
					\item[(ii)] For each $i\in\{1,\dots,r\},$ $\overline{V_i}=\mu(V_i)$ and $\operatorname{rank}_R(V_i)=\operatorname{dim}(\overline{V_i}).$ In particular, denoting $k_i$ the multiplicity  of $\mu(f_i)$ in $\rchi_{\mu(\varphi)},$ we have $\operatorname{rank}_R(V_i)=k_i\deg(f_i).$
				\end{itemize}
				
			\end{proposition}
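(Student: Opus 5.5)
The plan is to imitate the classical primary decomposition, using idempotents built from a B\'ezout identity, and to check that every step survives reduction modulo $\mathfrak{m}$.

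\emph{Idempotents and the decomposition in (i).} By Lemma \ref{lem:regularMonic} and Proposition \ref{prop:sqFreeUniqIrrFacto} we may take $f$ and all $f_i$ monic. For each $i$ put $g_i=\prod_{j\neq i}f_j$. Since the $\mu(f_i)$ are pairwise coprime in $k[x]$, the polynomials $\mu(g_1),\dots,\mu(g_r)$ generate $k[x]$. Lifting a relation $\sum\bar a_i\mu(g_i)=1$ gives $\sum a_ig_i=1+\epsilon$ with $\epsilon\in\mathfrak{m}[x]$. As $\mathfrak{m}$ is nilpotent, $1+\epsilon$ is a unit of $R[x]$, so after rescaling we get $\sum_i a_ig_i=1$ in $R[x]$. (Alternatively, apply Lemma \ref{lem:coprimeIffCoprime} inductively.)

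Set $e_i=a_i(\varphi)g_i(\varphi)$. For $i\neq j$ we have $f\mid g_ig_j$, so $f(\varphi)=0$ gives $e_ie_j=0$. Together with $\sum e_i=\mathrm{id}$, this shows that the $e_i$ are orthogonal idempotents commuting with $\varphi$. Hence $F=\bigoplus_i e_i(F)$, and each summand is $\varphi$-invariant. It remains to show $e_i(F)=V_i$:
\begin{itemize}
\item $f_ie_i=a_if$, so $f_i(\varphi)e_i=0$ and $e_i(F)\subseteq V_i$;
\item if $v\in V_i$ and $j\neq i$, then $f_i\mid g_j$, so $e_j(v)=0$ and $v=e_i(v)$.
\end{itemize}
Each $V_i$ is then a direct summand of $F$, hence projective, hence free by Proposition \ref{prop:projectiveFreeOverLocal}.

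\emph{Regularity when $N_\varphi=(f)$.} Clearly $f_i\in N_{\varphi|_{V_i}}$. Conversely, let $h(\varphi|_{V_i})=0$. Then $hg_i$ annihilates $\varphi$: on $V_j$ with $j\neq i$ the factor $g_i(\varphi)$ kills, and on $V_i$ we use that $g_i(\varphi)$ commutes with $h(\varphi)$. So $f_ig_i=f\mid hg_i$. Since $g_i$ is monic, it is a non-zero-divisor and can be cancelled, giving $f_i\mid h$. Thus $N_{\varphi|_{V_i}}=(f_i)$ is principal with a monic generator, and Proposition \ref{prop:regularCharacterization} gives regularity.

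\emph{Part (ii).} Reduce everything through $\mu$. The elements $\mu(e_i)=\mu(a_i)(\mu(\varphi))\,\mu(g_i)(\mu(\varphi))$ are exactly the idempotents obtained from the same construction applied to $\mu(\varphi)$ and the pairwise coprime factorization $\mu(f)=\prod\mu(f_i)$, which annihilates $\mu(\varphi)$. The argument above, run over $k$, therefore gives $\mu(e_i)(F/\mathfrak{m}F)=\overline{V_i}$. Commutativity of diagram \eqref{eq:diagram} gives $\mu(V_i)=\mu(e_i(F))=\mu(e_i)(\mu(F))=\overline{V_i}$.

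Since $V_i$ is a free direct summand of $F$, the remark following Proposition \ref{prop:steinitz} identifies $\mu(V_i)$ with $V_i/\mathfrak{m}V_i$. A basis of $V_i$ then maps to a basis of $\mu(V_i)$ by Theorem \ref{thm:matsumura2.3} and Proposition \ref{prop:steinitz}(i), so $\operatorname{rank}V_i=\dim\overline{V_i}$.

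For the multiplicity count, consider $\mu(\varphi)|_{\overline{V_i}}$. It is annihilated by the irreducible $\mu(f_i)$, so its minimal polynomial is $\mu(f_i)$ and its characteristic polynomial is $\mu(f_i)^{d_i}$ with $d_i\deg f_i=\dim\overline{V_i}$. By the field version of Proposition \ref{prop:charaDividesChara}, $\chi_{\mu(\varphi)}=\prod_i\mu(f_i)^{d_i}$. Since the $\mu(f_i)$ are pairwise coprime irreducibles, this forces $d_i=k_i$.

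\emph{Main obstacle.} The delicate point is the identification $\mu(V_i)=\overline{V_i}$, because reduction mod $\mathfrak{m}$ need not commute with taking kernels. The plan avoids kernels by describing both $V_i$ and $\overline{V_i}$ as images of compatible idempotents, and image formation does commute with $\mu$.
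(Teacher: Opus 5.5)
Your proposal is correct. For (i) it follows the paper's route: the decomposition is exactly the Hoffman--Kunze projection argument the paper cites, and you also justify the B\'ezout identity in $R[x]$ by lifting from $k[x]$ and using nilpotency of $\mathfrak{m}$. Your regularity step (multiply $h\in N_{\varphi|_{V_i}}$ by the complementary factor $g_i$ to land in $(f)$, then cancel the monic $g_i$) is a direct, induction-free version of the paper's degree argument for $r=2$ followed by Euclidean division. For (ii) the paper gives no argument of its own and simply defers to Nechaev's proof for finite chain rings, so your reduction of the idempotents modulo $\mathfrak{m}$, giving $\mu(V_i)=\mu(e_i)(F/\mathfrak{m}F)=\overline{V_i}$, together with the block-diagonal count over $k$, supplies a self-contained proof of that part.
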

		
			\begin{proof}
				\textit{(i)} The proof that $F=\bigoplus^{r}_{i=1} V_i$ with each $V_i$ invariant under $\varphi$ is the same as the case of vector spaces, see \cite[Theorem 12, Chapter 6]{hoffmann1971linear} for example. Moreover, from Proposition \ref{prop:projectiveFreeOverLocal}, each $V_i$ is free.
				Now suppose that $N_\varphi=(f).$ We prove the result for $r=2$ and the general case follows immediately by induction. From Lemma \ref{lem:regularMonic} we can suppose w.l.o.g that $f$ is monic as well as $f_1$ and $f_2.$ For $i=1,2,$ denote $\varphi_i=\varphi_{\restriction V_i}.$		
				We have obviously that  $f_i\in N_{\varphi_i}.$ Now suppose there exists $g\in N_{\varphi_1} $ such that $\deg(g)<\deg(f_1).$ As $F=V_1\oplus V_2,$ for each $v\in F$ there exists $v_1\in V_1$ and $v_2\in V_2$ such that $v=v_1+v_2,$ so we have $(gf_2)(\varphi)(v)=(gf_2)(\varphi_1)(v_1)+(gf_2)(\varphi_2)(v_2)=f_2(\varphi_1)(g(\varphi_1)(v_1))+g(\varphi_2)(f_2(\varphi_2)(v_2))=0.$
				Hence $gf_2 \in N_\varphi=(f),$ but as $\deg(g)<\deg(f_1),$ $\deg(gf_2)<\deg(f)$ which implies that $gf_2=0,$ and as $f_2$ is monic this means that $g=0.$ $f_1$ is then of minimal degree among the nonzero polynomials of $N_{\varphi_1},$ hence by Euclidean division, as $f_1$ is monic, $N_{\varphi_1}=(f_1).$ The case of $\varphi_2$ follows by symmetry.
								
				\textit{(ii)}  Same proof as the one given for finite chain rings in \cite [3.5. Proposition]{nevcaev1973finite}.		\end{proof}

			A standard theorem in linear algebra states that the minimal polynomial of a block diagonal matrix is the least common multiple of the minimal polynomials of its diagonal blocks. We adapt this result to our context as follows.

		\begin{proposition}\label{prop:sqRegularityIsLocal}
		Suppose there exists a residually square-free polynomial $f$ in $N_\varphi$. If $F$ is a direct sum of $\varphi$-invariant submodules such that the restriction of $\varphi$ to each summand is a regular endomorphism, then $\varphi$ is regular.
		\end{proposition}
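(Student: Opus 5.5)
The plan is to verify criterion (ii) of Proposition \ref{prop:regularCharacterization}: I will show that $N_\varphi$ is generated by a single monic polynomial. Write $F=\bigoplus_{j=1}^{s} W_j$ with each $W_j$ $\varphi$-invariant, and put $\varphi_j=\varphi_{\restriction W_j}$.

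\textbf{Step 1: reduce to $f$ monic.} Since $\mu(f)$ is square-free, it is nonzero. Hence $f$ has a unit coefficient and is regular. By Lemma \ref{lem:regularMonic}(a), $f=ug$ with $u$ a unit of $R[x]$ and $g$ monic. Then $g=u^{-1}f\in N_\varphi$, and $\mu(g)$ differs from $\mu(f)$ by a unit of $k[x]$, so $g$ is still residually square-free. We may therefore assume $f$ is monic. By Hensel's lemma and Proposition \ref{prop:sqFreeUniqIrrFacto}, write $f=f_1\cdots f_r$ with the $f_i$ monic, pairwise coprime and basic irreducible.

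\textbf{Step 2: each $\lambda_{\varphi_j}$ is a subproduct of the $f_i$.} Since $\varphi_j$ is regular, $N_{\varphi_j}=(\lambda_j)$ with $\lambda_j$ monic. Also $f\in N_{\varphi_j}$, because $f(\varphi)=0$ restricts to $W_j$. So $f=\lambda_j q_j$, and $q_j$ is monic: Euclidean division (Lemma \ref{lem:EuclideanDiv}) by the monic $\lambda_j$ gives this directly. From $\mu(f)=\mu(\lambda_j)\mu(q_j)$, both $\lambda_j$ and $q_j$ are monic and residually square-free. Factoring each into monic, pairwise coprime basic irreducibles and concatenating gives a factorization of $f$ of the same kind. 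Here pairwise coprimality across the two lists follows from Lemma \ref{lem:coprimeIffCoprime}, since $\mu(f)$ is square-free. By the uniqueness in Proposition \ref{prop:sqFreeUniqIrrFacto}, $\lambda_j=\prod_{i\in S_j} f_i$ for some subset $S_j\subseteq\{1,\dots,r\}$.

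\textbf{Step 3: the generator.} Let $S=\bigcup_j S_j$ and $g=\prod_{i\in S} f_i$, a monic polynomial. Since $\lambda_j\mid g$, we have $g(\varphi_j)=0$ for every $j$, hence $g(\varphi)=0$ because $F=\bigoplus W_j$. Conversely, take $h\in N_\varphi$. Then $h\in N_{\varphi_j}=(\lambda_j)$ for every $j$, so $f_i\mid h$ for every $i\in S$. It remains to show that a product of pairwise coprime divisors of $h$ divides $h$; I argue by induction. If $a\mid h$, $b\mid h$ and $(a,b)=R[x]$, write $1=ua+vb$ and $h=ab'$. Then $b'=uab'+vbb'=uh+vbb'$ is divisible by $b$, so $ab\mid h$. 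Products of pairwise coprime polynomials remain coprime to a further one, which lets the induction run. Hence $N_\varphi=(g)$ with $g$ monic, and Proposition \ref{prop:regularCharacterization} gives that $\varphi$ is regular.

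The main obstacle is Step 2: making the appeal to uniqueness of factorization rigorous. This requires checking that the cofactor $q_j$ is monic and residually square-free, so that the concatenated factorization is a legitimate one to which Proposition \ref{prop:sqFreeUniqIrrFacto} applies. The rest is routine.
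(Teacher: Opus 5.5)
Your proof is correct. It shares the paper's key step: using uniqueness of factorization (Proposition \ref{prop:sqFreeUniqIrrFacto}) to show that each summand's minimal polynomial is a subproduct $\prod_{i\in S_j} f_i$ of $f$. Your Step 2 is more careful here than the paper, which simply cites uniqueness; you check that the cofactor $q_j$ is monic and residually square-free and that the concatenated factorization is pairwise coprime.

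Where you diverge is the conclusion. The paper chooses a basis in which $\varphi$ is block diagonal and passes to the residue field. There it uses the linear-algebra fact that the minimal polynomial of $\operatorname{diag}(\mu(M_1),\dots,\mu(M_q))$ is the l.c.m.\ of the blocks' minimal polynomials. Together with $\mu(\lambda_{M_i})=\lambda_{\mu(M_i)}$, this gives $\mu(g)=\lambda_{\mu(\varphi)}$. Regularity then follows from criterion (iii) of Proposition \ref{prop:regularCharacterization}, with an unstated degree argument that $g$ has least degree among monic elements of $N_\varphi$.

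You instead stay in $R[x]$. You note that $N_\varphi=\bigcap_j(\lambda_j)$ and identify this intersection with $(g)$ by the coprime-divisors argument, so you verify criterion (ii) directly. Your route gives the explicit description $N_\varphi=\bigl(\prod_{i\in S}f_i\bigr)$ without the residue field or the l.c.m.\ fact. It also uses regularity of the summands only in the form $N_{\varphi_j}=(\lambda_j)$. The paper's route is shorter given standard facts over fields, but it invokes regularity of the blocks through both (ii) and (iii) and leaves the minimal-degree check implicit.
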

		\begin{proof}
			As $\mu(f)$ is square-free, by Lemma \ref{lem:regularMonic} we can take $f$ to be monic \textit{w.l.o.g}. Let $f=\prod_{i=1}^{r}f_i$ be the unique factorization of $f$ into monic pairwise coprime basic irreducible polynomials.  As $R$ is local, each summand of the direct sum is free, hence there exists a basis $\mathcal{B}$ of $F$ relative to which the matrix of $\varphi$ is of the form $diag(M_1,\dots,M_q)$ for some regular matrices $M_1,\dots,M_q.$ It is easy to see that $\mu(\varphi)$ is represented by $diag(\mu(M_1),\dots,\mu(M_q))$ in the ordered basis $\mu(\mathcal{B})$ of $F/\mathfrak{m}F,$ hence the minimal polynomial of $\mu(\varphi)$ is $\text{l.c.m}(\lambda_{\mu(M_1)},\dots,\lambda_{\mu(M_q)}).$ For each $i\in \{1,\dots,q\},$ as $M_i$ is regular and annihilated by $f,$ we have $\mu(\lambda_{M_i})=\lambda_{\mu(M_i)}$ and  $\lambda_{M_i}$ divides $f,$ so by the uniqueness of the irreducible decomposition of Proposition \ref{prop:sqFreeUniqIrrFacto}, there exists a subset $J_i\subset \{1,\dots,r\}$ such that $\lambda_{M_i}=\prod_{j\in J_i}f_j.$ Let $J=\cup_{1\leq i\leq q}J_i$ and denote  $g=\prod_{i\in J}f_i.$ We have $g\in N_\varphi$ and $\mu(g)=\text{l.c.m}(\mu(\lambda_{M_1}),\dots,\mu(\lambda_{M_q}))=\text{l.c.m}(\lambda_{\mu(M_1)},\dots,\lambda_{\mu(M_q)})=\lambda_{\mu(\varphi)},$ hence by Proposition \ref{prop:regularCharacterization}, $\varphi$ is regular. 
		\end{proof}

		Next, we consider the case where the characteristic polynomial is residually square-free.

		\begin{proposition}\label{prop:charaSqFreeImpliesRegular}
			If $\mu(\rchi_\varphi)$ is square-free, then $\varphi$ is regular and $N_\varphi=(\rchi_\varphi).$ Furthermore, there exists an element $w\in F$ such that $\mathcal{B}=\{w,\varphi(w),\dots,\varphi^{n-1}(w)\}$ is a basis of $F$ and  $\varphi$ is represented in $\mathcal{B}$ by $com(\rchi_\varphi)$.			
		\end{proposition}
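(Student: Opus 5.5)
Since $\mu(\rchi_\varphi)=\rchi_{\mu(\varphi)}$ is square-free in $k[x]$, the minimal polynomial $\lambda_{\mu(\varphi)}$ of $\mu(\varphi)$ divides $\rchi_{\mu(\varphi)}$. Both have the same irreducible factors, and these factors are simple, so $\lambda_{\mu(\varphi)}=\rchi_{\mu(\varphi)}$ and $\mu(\varphi)$ is non-derogatory. I would prove regularity first, and then obtain the cyclic basis by lifting a cyclic vector of $\mu(\varphi)$.

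\emph{Regularity.} By the Cayley--Hamilton theorem, $\rchi_\varphi\in N_\varphi$, so $\deg\lambda_\varphi\le n$. We also know that $\lambda_{\mu(\varphi)}$ divides $\mu(\lambda_\varphi)$ and has degree $n$. Since $\lambda_\varphi$ is monic, $\deg\mu(\lambda_\varphi)=\deg\lambda_\varphi\le n$. Hence $\mu(\lambda_\varphi)=\lambda_{\mu(\varphi)}$, both being monic of degree $n$. By Proposition \ref{prop:regularCharacterization}, $\varphi$ is regular and $N_\varphi=(\lambda_\varphi)$ with $\lambda_\varphi$ monic of degree $n$. Because $\rchi_\varphi$ is also monic of degree $n$ and lies in $N_\varphi$, Euclidean division (Lemma \ref{lem:EuclideanDiv}) gives $\rchi_\varphi=q\lambda_\varphi$, which forces $q=1$ by comparing leading terms and degrees. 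Therefore $N_\varphi=(\rchi_\varphi)$.

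\emph{Cyclic basis.} Over the field $k$, a non-derogatory endomorphism admits a cyclic vector. So there is $\bar w\in F/\mathfrak{m}F$ such that $\{\bar w,\mu(\varphi)(\bar w),\dots,\mu(\varphi)^{n-1}(\bar w)\}$ is a $k$-basis. Choose any $w\in F$ with $\mu(w)=\bar w$. By the commutative diagram (\ref{eq:diagram}), $\mu(\varphi^i(w))=\mu(\varphi)^i(\bar w)$. Hence the images of $w,\varphi(w),\dots,\varphi^{n-1}(w)$ form a basis of $F/\mathfrak{m}F$. By Theorem \ref{thm:matsumura2.3} they form a minimal generating set of $F$, and by Lemma \ref{lem:minimalGenSetIsLinIndep} (or Corollary \ref{cor:independenceTransfer}) they form a basis $\mathcal{B}$ of $F$.

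\emph{Matrix.} Write $\rchi_\varphi=x^n-\sum a_ix^i$. Since $\rchi_\varphi(\varphi)=0$, we get $\varphi(\varphi^{n-1}(w))=\sum a_i\varphi^i(w)$, while $\varphi(\varphi^{i}(w))=\varphi^{i+1}(w)$ for $i<n-1$. These relations say exactly that the matrix of $\varphi$ in $\mathcal{B}$ is the companion matrix of $\rchi_\varphi$ (up to the sign convention in $com(f)$, which I would check against the definition in the Preliminaries).

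The only nontrivial input is the existence of a cyclic vector for a non-derogatory operator over a field, which is standard linear algebra. I expect the main care point to be the degree comparison showing $\mu(\lambda_\varphi)=\lambda_{\mu(\varphi)}$, and in particular the fact that $\lambda_{\mu(\varphi)}$ has full degree $n$.
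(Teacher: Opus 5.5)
Your proposal is correct and is essentially the paper's proof: you deduce $\lambda_{\mu(\varphi)}=\rchi_{\mu(\varphi)}$, apply Proposition~\ref{prop:regularCharacterization}(iii), and lift a cyclic vector of $\mu(\varphi)$, and you make explicit the degree comparisons giving $\mu(\lambda_\varphi)=\lambda_{\mu(\varphi)}$ and $\lambda_\varphi=\rchi_\varphi$, which the paper leaves implicit. Your sign caveat is justified: the relation $\varphi^n(w)=\sum a_i\varphi^i(w)$ gives last column $(a_0,\dots,a_{n-1})^T$, which is the matrix whose characteristic polynomial is $x^n-\sum a_ix^i$, whereas the $com(f)$ displayed in the Preliminaries has entries $-a_i$ and so, as printed, does not satisfy $\rchi_{com(f)}=f$.
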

		\begin{proof}
			As $\mu(\rchi_\varphi)=\rchi_{\mu(\varphi)},$ if $\mu(\rchi_\varphi)$ is square-free, then  $\rchi_{\mu(\varphi)}=\lambda_{\mu(\varphi)},$ which implies by Proposition \ref{prop:regularCharacterization}(iii) that $\varphi$ is regular and $N_\varphi=(\rchi_\varphi)$. Moreover, as $\rchi_{\mu(\varphi)}=\lambda_{\mu(\varphi)},$ there exists an element $w\in F$ such that $\{w+\mathfrak{m}F,\mu(\varphi)(w+\mathfrak{m}F),\dots,\mu(\varphi)^{n-1}(w+\mathfrak{m}F)\}$ is a basis of $F/\mathfrak{m}F$ (see \cite[page 237, Corollary]{hoffmann1971linear}). Hence, as $\mu(\varphi)^i(w+\mathfrak{m}F)=\varphi^i(w)+\mathfrak{m}F$ for all $i,$ by Corollary \ref{cor:independenceTransfer} we have that $\{w,\varphi(w),\dots,\varphi^{n-1}(w)\}$ is linearly independent in $F,$ which implies by Proposition  \ref{prop:steinitz}(ii) that it is a basis of $F,$  and it is easy to check that $\varphi$ is represented by $com(\rchi_\varphi)$ in it.	
		\end{proof}

		 In \cite[Theorem III.2]{mcdonald1978similarity}, it was claimed that each regular endomorphism of $F$ can be represented by a matrix that splits as a sum of companion matrices  $\text{diag}(C_1,\dots,C_r)$ for some companion matrices $C_1,\dots,C_r.$ This was soon after shown to be false, (see e.g.  \cite{guralnick1981similarity,nechaev1986similarity}). However, the next proposition shows that it holds when the minimal polynomial of the endomorphism is residually square-free.

		\begin{proposition}\label{prop:squareFreeAnnihilatorImpliesNormal}
		 If $\varphi$ has a residually square-free annihilating polynomial, then a basis of $F$ may be chosen wherein $\varphi$ is represented by a matrix that splits as a sum of companion matrices.
		\end{proposition}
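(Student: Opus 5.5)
The plan is to reduce to the case where the annihilating polynomial is basic irreducible, view each such piece as a module over an SPIR $S=R[x]/(f_i)$, and show that the piece is $S$-free; an $S$-basis then yields companion blocks. First, by Lemma \ref{lem:regularMonic} we may assume the residually square-free $f\in N_\varphi$ is monic. By Proposition \ref{prop:sqFreeUniqIrrFacto} we write $f=\prod_{i=1}^r f_i$ with the $f_i$ monic, pairwise coprime and basic irreducible. By Proposition \ref{prop:kernelsProp}(i), $F=\bigoplus_{i=1}^r V_i$ with $V_i=\ker f_i(\varphi)$ free and $\varphi$-invariant. Concatenating bases of the $V_i$ gives a block diagonal matrix, so it suffices to treat a single $V_i$. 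Equivalently, we may assume from now on that $f$ itself is monic basic irreducible of degree $d$, and we must find a basis of $F$ in which $\varphi$ is $\operatorname{diag}(com(f),\dots,com(f))$.

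In this reduced situation, $F$ becomes a finitely generated module over $S=R[x]/(f)$ via $g(x)\cdot w=g(\varphi)(w)$, since $f\in N_\varphi$. By Proposition \ref{prop:quotientByIrrIsSPIR}, $S$ is an SPIR with maximal ideal $\gamma S$ and the same nilpotency index $\nu$. I would invoke the structure theory of finitely generated modules over a PIR (via Zariski--Samuel, or via Smith normal form over $S$ applied to a presentation matrix, as in the proof of Theorem \ref{thm:subModEnumFromAGivenBasis}). This gives $F\simeq \bigoplus_{j=1}^{m} S/(\gamma^{a_j})$ as $S$-modules, with $1\le a_j\le \nu$.

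The key step, and the main obstacle, is to show that every $a_j=\nu$, i.e.\ that $F$ is $S$-free. Each summand $S/(\gamma^{a_j})$ is an $R$-direct summand of the free $R$-module $F$, so it is $R$-projective and hence $R$-free by Proposition \ref{prop:projectiveFreeOverLocal}. On the other hand, since $f$ is monic of degree $d$, $S$ is $R$-free with basis $1,x,\dots,x^{d-1}$. Hence $S/(\gamma^{a})\simeq (R/\gamma^a R)^d$ as $R$-modules, and this is annihilated by $\gamma^{a}\ne 0$ whenever $a<\nu$. A nonzero free $R$-module has zero annihilator, so $a_j=\nu$ for all $j$, and $F\simeq S^m$. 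If one prefers to avoid the general structure theorem, an alternative route is the following. Choose an $S$-minimal generating set; by Theorem \ref{thm:matsumura2.3} it corresponds to a $k[x]/(\mu(f))$-basis of $F/\gamma F$. Then compare $R$-ranks: the free $S$-module on this set surjects onto $F$, and both have $R$-rank $md$, since $F/\mathfrak{m}F$ has $k$-dimension $md$. By Proposition \ref{prop:steinitz}(ii) applied to the images of an $R$-basis, the surjection is an isomorphism.

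Finally, take an $S$-basis $w_1,\dots,w_m$ of $F$. Then $\{\varphi^t(w_j): 1\le j\le m,\ 0\le t\le d-1\}$ is an $R$-basis of $F$. On each block $w_j,\varphi(w_j),\dots,\varphi^{d-1}(w_j)$, the relation $f(\varphi)(w_j)=0$ shows that $\varphi$ acts by $com(f)$. Reassembling over the $V_i$ yields the block diagonal matrix $\operatorname{diag}(com(f_1),\dots,com(f_1),\dots,com(f_r),\dots,com(f_r))$. The number of blocks for $f_i$ equals $k_i$, consistent with Proposition \ref{prop:kernelsProp}(ii).
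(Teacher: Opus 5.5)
Your proof is correct, but it takes a different route from the paper's. The paper's argument is short. It quotes Nechaev's criterion (\cite[Theorem 1]{nechaev1986similarity}): $\varphi$ has a block-companion matrix if and only if $M(\varphi)$ is a direct sum of cyclic $R[x]$-modules. It then obtains that decomposition from the structure theorem over the PIR $R[x]/(f)$ (Corollary \ref{cor:quotientRingIsPIR}). There is no splitting into the $V_i$ and no freeness argument.

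You instead split $F=\bigoplus V_i$ first and work over the SPIR $S=R[x]/(f_i)$. You show each $V_i$ is $S$-free by the annihilator argument: an $R$-free summand $S/\gamma^aS\simeq(R/\gamma^aR)^d$ forces $a=\nu$. After that the $R$-basis $\{\varphi^t(w_j)\}$ is explicit. This replaces an external theorem by an elementary step. The real content of Nechaev's criterion is that an $R$-free cyclic summand has an $R$-basis $w,\varphi(w),\dots$; in your setting this is transparent, because every summand is a copy of $S$, which is $R$-free on $1,x,\dots,x^{d-1}$.

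Your route also gives more. You get the sharper normal form of Theorem \ref{thm:minIrrImpliesPhiSimToMatDiagCompanionBlocks} directly, with every block equal to $\operatorname{com}(f_i)$ repeated $k_i$ times. You also get the $S$-freeness of Proposition \ref{prop:associatedModuleIsFreeWhenMinIrr} for an arbitrary residually square-free annihilator. None of this needs regularity (Theorem \ref{thm:sqFreeImplyRegular}), which in the paper is itself derived from this proposition. The paper's route is shorter and needs no reduction to basic irreducible factors, but it only yields an unspecified companion-block form.

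One minor slip, in your optional alternative only. To conclude that the surjection $S^m\to F$ between free $R$-modules of the same rank is an isomorphism, you need that a generating set of cardinality $\operatorname{rank}_R(F)$ is a basis. That follows from Theorem \ref{thm:matsumura2.3}(ii) and Lemma \ref{lem:minimalGenSetIsLinIndep}, not from Proposition \ref{prop:steinitz}(ii), which goes the other way. Your main argument via the structure theorem does not depend on this.
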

		\begin{proof}
			From \cite[Theorem 1]{nechaev1986similarity}, we have that $\varphi$ may be represented by a matrix that splits as a sum of companion matrices if and only if the $R[x]$-module $M(\varphi)$ decomposes into a direct sum of cyclic modules, hence all we need to prove is that $M(\varphi)$ has such decomposition. 
			Let $f\in N_\varphi$ be such that $\mu(f)$ is square-free. As $(f)\subseteq \operatorname{Ann}_{R[x]}(M(\varphi)),$ $M(\varphi)$ can be considered as a module over $R[x]/(f)$ in a  natural way, moreover, it decomposes as a direct sum of cyclic modules as an $R[x]$-module if and only if it does as an $R[x]/(f)$-module. But as $\mu(f)$ is square-free, $R[x]/(f)$ is a PIR by Corollary \ref{cor:quotientRingIsPIR}. Hence by the structure theorem for finitely generated modules over a PIR \cite[Theorem 15.33]{brown1993matrices}, as an $R[x]/(f)$-module, $M(\varphi)$ is a direct sum of cyclic modules.
		\end{proof}
		
			Combining the previous propositions yields the following important result. 
		\begin{theorem}\label{thm:sqFreeImplyRegular}
			If there exists $f\in N_\varphi$ such that $\mu(f)$ is square-free then $\varphi$ is regular, and $N_\varphi=(g)$ for some monic polynomial $g\in R[x]$ that divides $f.$
		\end{theorem}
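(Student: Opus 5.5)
The plan is to combine Proposition \ref{prop:squareFreeAnnihilatorImpliesNormal}, Proposition \ref{prop:charaSqFreeImpliesRegular} and Proposition \ref{prop:sqRegularityIsLocal}.

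\textbf{Normalizing $f$ and decomposing $F$.} Since $\mu(f)$ is square-free, it is nonzero, so $f$ is regular. By Lemma \ref{lem:regularMonic}(a) we write $f=uh$ with $u$ a unit polynomial and $h$ monic. Then $h\in N_\varphi$, $\mu(h)$ is square-free, and $f$ and $h$ divide each other. We may therefore assume $f$ is monic. By Proposition \ref{prop:squareFreeAnnihilatorImpliesNormal}, there is a decomposition $F=W_1\oplus\dots\oplus W_q$ into $\varphi$-invariant free submodules. On each $W_j$, the restriction $\varphi_j=\varphi_{\restriction W_j}$ is represented by a companion matrix $C_j=com(g_j)$ for some monic $g_j$ with $g_j=\rchi_{\varphi_j}$.

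\textbf{Regularity of each block.} The key step is to show that each $\varphi_j$ is regular. The approach is to show $g_j$ divides $f$, so that $\mu(g_j)$ is square-free, and then apply Proposition \ref{prop:charaSqFreeImpliesRegular}. Let $w_j$ be the cyclic generator of $W_j$ with basis $w_j,\varphi(w_j),\dots,\varphi^{d_j-1}(w_j)$, where $d_j=\deg g_j$.
\begin{itemize}
\item Since $g_j$ is monic, Lemma \ref{lem:EuclideanDiv} gives $f=qg_j+r$ with $\deg r<d_j$.
\item We have $f(\varphi)(w_j)=0$, and $g_j(\varphi)(w_j)=0$ by Cayley--Hamilton on $W_j$.
\item Hence $r(\varphi)(w_j)=0$. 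Because the vectors $\varphi^i(w_j)$ for $i<d_j$ are linearly independent, this forces $r=0$.
\end{itemize}
So $g_j\mid f$, and $\mu(g_j)\mid\mu(f)$ is square-free. Proposition \ref{prop:charaSqFreeImpliesRegular} then gives that $\varphi_j$ is regular. Proposition \ref{prop:sqRegularityIsLocal} now yields that $\varphi$ is regular.

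\textbf{The generator $g$ and its divisibility.} By Proposition \ref{prop:regularCharacterization}, $N_\varphi=(g)$ with $g=\lambda_\varphi$ monic. Since $f\in N_\varphi=(g)$, we get $f=ag$ for some $a$, so $g$ divides $f$. This also follows from Euclidean division by the monic $g$.

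\textbf{Main obstacle.} The delicate point is the block step: checking that the cyclic decomposition genuinely gives companion blocks whose characteristic polynomials divide $f$. That is where linear independence of the cyclic basis is used. Proposition \ref{prop:squareFreeAnnihilatorImpliesNormal} supplies this independence, since a companion-matrix representation means the $\varphi^i(w_j)$ form part of a basis.
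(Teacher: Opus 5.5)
Your proof is correct and follows the same route as the paper: the companion-block decomposition from Proposition~\ref{prop:squareFreeAnnihilatorImpliesNormal}, regularity of each block via Proposition~\ref{prop:charaSqFreeImpliesRegular}, and then Proposition~\ref{prop:sqRegularityIsLocal}. The only difference is that you show each block's characteristic polynomial divides $f$ over $R$, by Euclidean division against the cyclic basis, whereas the paper works modulo $\mathfrak{m}$ using that a companion matrix over $k$ has equal characteristic and minimal polynomials; your argument in fact already gives $N_{\varphi_j}=(g_j)$ directly, so the appeal to Proposition~\ref{prop:charaSqFreeImpliesRegular} is not even needed.
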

		\begin{proof}
			Following Proposition \ref{prop:squareFreeAnnihilatorImpliesNormal}, let $\mathcal{B}$ be a basis of $F$ relative to which the matrix of $\varphi$ is 
			$\text{diag}(C_1,\dots,C_r)$ for some companion matrices $C_1,\dots,C_r.$ Fix an integer $1\leq i\leq r.$ It is easy to see that $f\in N_{C_i},$ hence $\mu(f)$ annihilates $\mu(C_i).$ But as $C_i$ is a companion matrix, so is $\mu(C_i),$ hence $\rchi_{\mu(C_i)}=\lambda_{\mu(C_i)}$ which implies that $\rchi_{\mu(C_i)}$ divides $\mu(f).$ We have then $\rchi_{\mu(C_i)}$ is square-free, but as $\rchi_{\mu(C_i)}=\mu(\rchi_{C_i}),$ using Proposition \ref{prop:charaSqFreeImpliesRegular} we see that $C_i$ is regular. Finally, by Proposition \ref{prop:sqRegularityIsLocal}, $\varphi$ is also regular.
		\end{proof}
		
		\begin{corollary}\label{cor:sqFreeAnnPolImpliesSqFreeMinPoly}
			$\varphi$ has a residually square-free annihilating polynomial if and only if $\mu(\lambda_\varphi)$ is square-free.
		\end{corollary}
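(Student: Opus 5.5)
The plan is to prove the two implications separately, with Theorem \ref{thm:sqFreeImplyRegular} doing the main work.

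\emph{Reverse direction.} Suppose $\mu(\lambda_\varphi)$ is square-free. Then $\lambda_\varphi$ is itself an element of $N_\varphi$ whose reduction is square-free, that is, a residually square-free annihilating polynomial. Nothing more is needed here. Recall that $\lambda_\varphi$ always denotes some monic polynomial of least degree in $N_\varphi$. By Proposition \ref{prop:projOfMinIsUnique}, $\mu(\lambda_\varphi)$ does not depend on this choice, so the statement is well posed even before regularity is known.

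\emph{Forward direction.} Let $f \in N_\varphi$ with $\mu(f)$ square-free. By Theorem \ref{thm:sqFreeImplyRegular}, $\varphi$ is regular and $N_\varphi = (g)$ for some monic $g$ dividing $f$. We first check that $g$ is a monic polynomial of least degree in $N_\varphi$, so that $g = \lambda_\varphi$. Any monic $h \in N_\varphi$ can be written $h = qg$. Since $g$ is monic, the leading coefficient of $qg$ equals that of $q$ and $\deg(qg) = \deg q + \deg g$, so $\deg h \ge \deg g$. (Alternatively, uniqueness of the least-degree monic polynomial under regularity is already noted after Proposition \ref{prop:regularCharacterization}.)

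It then remains to show that $\mu(g)$ is square-free. Write $f = gq$. Reducing gives $\mu(f) = \mu(g)\mu(q)$ in $k[x]$. Any divisor of a square-free polynomial over the field $k$ is square-free, and $\mu(g) \neq 0$ because $g$ is monic, so $\mu(\lambda_\varphi) = \mu(g)$ is square-free.

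\emph{Alternative route.} One could instead use Proposition \ref{prop:regularCharacterization}(iii), which gives $\mu(\lambda_\varphi) = \lambda_{\mu(\varphi)}$. Since $\lambda_{\mu(\varphi)}$ divides $\mu(f)$, it is square-free. Either way works.

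\emph{Main obstacle.} The only real difficulty is making sure $\lambda_\varphi$ is well defined and matches the generator $g$. The key input is the regularity supplied by Theorem \ref{thm:sqFreeImplyRegular}. Everything else reduces to divisibility in the PID $k[x]$.
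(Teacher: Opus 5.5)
Your proof is correct and matches the paper's argument: Theorem \ref{thm:sqFreeImplyRegular} gives regularity, so $N_\varphi=(\lambda_\varphi)$ and $\lambda_\varphi \mid f$, hence $\mu(\lambda_\varphi)\mid\mu(f)$ is square-free; the converse holds because $\lambda_\varphi\in N_\varphi$. Your extra checks (that the monic generator $g$ is the least-degree monic element of $N_\varphi$, and that $\mu(\lambda_\varphi)$ is well defined by Proposition \ref{prop:projOfMinIsUnique}) only spell out what the paper leaves implicit.
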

		\begin{proof}
			Suppose there exists $f\in N_\varphi$ such that $\mu(f)$ is square-free. We have then by Theorem \ref{thm:sqFreeImplyRegular}, $\varphi$ is regular. Hence  $N_\varphi=(\lambda_\varphi)$ and $\lambda_\varphi$ divides $f.$ We have then $\mu(\lambda_\varphi)$ divides $\mu(f)$ and thus it is square-free. The converse is immediate, since $\lambda_\varphi\in N_\varphi.$ 
		\end{proof}
	
		\begin{remark}
			The condition that not only the annihilating polynomial is square-free but also its   projection modulo $\mathfrak{m},$ is necessary for the theorem to hold. Over the SPIR $R=\mathbb{Z}/8\mathbb{Z}$ for example, the matrix $A=\begin{pmatrix}
					2 & 0   \\
					0 & 4 
				\end{pmatrix}$ is annihilated by the polynomial $(x-2)(x-4)$ which is easily verified to be square-free, however it is not regular. Indeed, as $\mu(A)=0,$ $\lambda_{\mu(A)}=x,$ so for $A$ to be regular we must have that $A$ is annihilated by a polynomial of the form $x-\alpha$ with $\alpha$ a zero-divisor in $R,$ and it is easy to see that it is not the case.
		\end{remark}
		
		\begin{remark}(Converse of Proposition \ref{prop:sqRegularityIsLocal})\label{rk:localRegularityConverse}
			Let $W$ be a free $\varphi$-invariant submodule of $F.$ An annihilating polynomial of $\varphi$  obviously annihilates $\varphi_{\restriction W}.$ Hence if $\varphi$ has a  residually square-free annihilating polynomial, then so does $\varphi_{\restriction W},$ and by Theorem \ref{thm:sqFreeImplyRegular}, $\varphi_{\restriction W}$ is also regular. 	\end{remark}

		\begin{notation}	
		For the rest of this paper, we assume that $\varphi$ has a residually square-free annihilating polynomial. In light of Theorem \ref{thm:sqFreeImplyRegular} and Corollary \ref{cor:sqFreeAnnPolImpliesSqFreeMinPoly}, this means that $\lambda_\varphi$ is unique and residually square-free,
		so by Proposition \ref{prop:sqFreeUniqIrrFacto} let $\lambda_\varphi=\prod_{i=1}^{r}f_i$ be the factorization of $\lambda_\varphi$ into monic basic irreducible pairwise coprime polynomials, and let $m_1,\dots,m_r \in \mathbb{N}^*$ be such that $\rchi_{\mu(\varphi)}=\prod_{i=1}^{r}\mu(f_i)^{m_i}.$ Moreover, for all $1\leq i\leq r,$ denote $V_i=ker(f_i(\varphi)),$ and $\varphi_i$ the restriction of $\varphi$ to $V_i.$
		\end{notation}

		Recall that a linear operator $T : V \rightarrow V$ on a vector space $V$ is called semisimple, if every $T$-invariant subspace has a $T$-invariant complement. In light of this and Proposition \ref{prop:projectiveFreeOverLocal}, we give the following definition of a semisimple endomorphism of $F.$  
		
		\begin{definition}
			An endomorphism $\psi\in \text{End}_R(F)$ is said to be \textit{semisimple} if every free $\psi$-invariant submodule of $F$ has a $\psi$-invariant complement.
		\end{definition}

		We conclude this section by showing that $\varphi$ is a semisimple endomorphism. We start with the following two propositions which will be useful throughout the remainder of the paper.

		\begin{proposition}\label{prop:invariantDirectSumWhenMinSqFree}	
			For each $\varphi$-invariant submodule $W$ of $F,$ we have \begin{itemize}
				\item[(i)] $W=\bigoplus_{i=1}^{r}W\cap V_i,$ 
				\item[(ii)] for all $1\leq i\leq r,$  $W\cap V_i$ is invariant under $\varphi,$
				\item[(iii)] $W$ is free if and only if $W\cap V_i$ is free for all $i.$
			\end{itemize}
		\end{proposition}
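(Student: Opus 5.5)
The approach is the standard primary-decomposition argument, using idempotent projections that are polynomials in $\varphi$. Since $\lambda_\varphi=\prod_{i=1}^r f_i$ with the $f_i$ monic and pairwise coprime, set $g_i=\prod_{j\neq i}f_j$. By Lemma~\ref{lem:coprimeIffCoprime}, the regular polynomials $f_i$ and $g_i$ are coprime, because $\mu(f_i)$ and $\mu(g_i)$ are coprime in $k[x]$. More directly, the ideal $(g_1,\dots,g_r)$ is all of $R[x]$: its image under $\mu$ is $(\mu(g_1),\dots,\mu(g_r))=k[x]$. Therefore $(g_1,\dots,g_r)+\mathfrak{m}[x]=R[x]$. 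Since $\mathfrak{m}$ is nilpotent, $\mathfrak{m}[x]$ lies in the Jacobson radical of $R[x]$, so the equality $(g_1,\dots,g_r)=R[x]$ follows. Alternatively, one can induct on $r$ using Lemma~\ref{lem:coprimeIffCoprime}. Either way we obtain $a_1,\dots,a_r\in R[x]$ with $\sum_i a_ig_i=1$.

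Define $\pi_i=(a_ig_i)(\varphi)\in R[\varphi]$. Then $\sum_i\pi_i=\mathrm{id}_F$. Moreover, $f_i(\varphi)\pi_i=(a_i\lambda_\varphi)(\varphi)=0$, so $\pi_i(F)\subseteq V_i$. This is essentially how the decomposition $F=\bigoplus V_i$ of Proposition~\ref{prop:kernelsProp}(i) arises, and I will take that decomposition as known.

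For (i), let $W$ be $\varphi$-invariant and $w\in W$. Since each $\pi_i$ is a polynomial in $\varphi$, we have $\pi_i(w)\in W$, and also $\pi_i(w)\in V_i$. Hence $w=\sum_i\pi_i(w)\in\sum_i (W\cap V_i)$. The sum is direct because the $V_i$ already form a direct sum by Proposition~\ref{prop:kernelsProp}(i). This gives $W=\bigoplus_i W\cap V_i$.

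Part (ii) is immediate: both $W$ and $V_i=\ker f_i(\varphi)$ are $\varphi$-invariant, since $f_i(\varphi)$ commutes with $\varphi$, so their intersection is $\varphi$-invariant.

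For (iii), suppose first that each $W\cap V_i$ is free. Then the direct sum in (i) is free, as a direct sum of free modules. Conversely, suppose $W$ is free. Each $W\cap V_i$ is a direct summand of $W$ by (i), hence projective. Over the local ring $R$, Proposition~\ref{prop:projectiveFreeOverLocal} shows it is free.

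No step is a real obstacle. The only point needing care is obtaining the Bézout relation $\sum a_ig_i=1$ over $R[x]$ rather than over $k[x]$; the lifting argument via nilpotency of $\mathfrak{m}$, or Lemma~\ref{lem:coprimeIffCoprime}, handles this.
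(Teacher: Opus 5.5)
Your proof is correct and is essentially the paper's argument: the paper also uses Bézout relations $\alpha_i f_i+\beta_i\hat{f_i}=1$ with $\hat{f_i}=\prod_{j\neq i}f_j$ to show that the polynomial operator $(\beta_i\hat{f_i})(\varphi)$ sends $w\in W$ to its $V_i$-component, whereas you package the same idea as a single partition of unity $\sum_i a_ig_i=1$ (and you justify the needed coprimality more carefully than the paper does). Parts (ii) and (iii) are handled exactly as in the paper, via invariance of intersections and Proposition~\ref{prop:projectiveFreeOverLocal}.
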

		\begin{proof}
			\textit{(i)} Let $w\in W.$ From Proposition \ref{prop:kernelsProp}, we have that $F=\bigoplus_{1\leq i\leq r}V_i,$ hence, for all $i,$ there exists a unique $w_i\in V_i$ such that $w=w_1+\dots+w_r.$ For all $1\leq i\leq r,$ let $\hat{f_i}=\prod_{\substack{1\leq j\leq r \\ j\neq i}}f_j .$ As the $f_i$'s are pairwise coprime, $f_i$ and $\hat{f_i}$ are also coprime, so for all $i,$  there exist $\alpha_i,\beta_i\in R[x]$ such that $\alpha_if_i+\beta_i\hat{f_i}=1.$ Take the case of $i=1$ for example. We have $\hat{f_1}(\varphi)(w)=\sum_{j=1}^{r}\hat{f_1}(\varphi)(w_j)=\hat{f_1}(\varphi)(w_1),$ moreover,  $(\alpha_1f_1)(\varphi)(w_1)+(\beta_1\hat{f_1})(\varphi)(w_1)=w_1,$ hence as $w_1\in V_1,$  $(\beta_1\hat{f_1})(\varphi)(w_1)=w_1,$ and thus $(\beta_1\hat{f_1})(\varphi)(w)=w_1,$ which means that $w_1 \in W\cap V_1,$ as $W$ is invariant under $\varphi$ and $w\in W.$ So by symmetry, for all $1\leq i\leq r,$ $w_i \in W\cap V_i,$ and thus $W=W\cap V_1 + \dots + W\cap V_r.$ Moreover, as the $V_i$'s are in a direct sum we get $W=W\cap V_1 \oplus \dots \oplus W\cap V_r.$

			\textit{(ii)} As both $W$ and the $V_i$'s are  $\varphi$-invariant, it is clear that the $W\cap V_i$'s are $\varphi$-invariant too.
			
			\textit{(iii)} If $W\cap V_i$ is free for all $i$ then $W$ is obviously free too. The converse follows from Proposition \ref{prop:projectiveFreeOverLocal}.
		\end{proof}

		\begin{proposition}\label{prop:associatedModuleIsFreeWhenMinIrr}
			Suppose that $\mu(\lambda_\varphi)$ is irreducible. Then $M(\varphi),$ the module associated with $\varphi,$ is free as an  $R[x]/(\lambda_\varphi)$-module.
		\end{proposition}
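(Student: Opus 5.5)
Write $f=\lambda_\varphi$, which is monic and basic irreducible by hypothesis, and set $S=R[x]/(f)$. Since $f$ annihilates $\varphi$, $M(\varphi)$ is naturally an $S$-module. By Proposition \ref{prop:quotientByIrrIsSPIR}, $S$ is an SPIR with maximal ideal $\gamma S$, nilpotency index $\nu$ and residue field $K=k[x]/(\mu(f))$. Over $S$, which is a PIR, \cite[Theorem 15.33]{brown1993matrices} decomposes $M(\varphi)$ as a direct sum of cyclic modules. Each cyclic summand has the form $S/\gamma^{t}S$ with $1\le t\le\nu$, because the ideals of $S$ are exactly the $\gamma^tS$. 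So it suffices to show that every summand has $t=\nu$, that is, that no summand is killed by $\gamma^{\nu-1}$.

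To compare the two structures, I count lengths as $R$-modules, equivalently $k$-dimensions of the layers. The ring $S$ is free over $R$ of rank $d=\deg f$, by Euclidean division by the monic $f$. Hence $\gamma^{j}S/\gamma^{j+1}S\cong S/\gamma S\cong K$, which has $k$-dimension $d$. A summand $S/\gamma^tS$ therefore has $R$-length $td$. On the other side, $F\cong R^n$ has $R$-length $n\nu$. Therefore
\[
n\nu=d\sum_{j} t_j .
\]
This count alone does not force every $t_j=\nu$, so I also need the number of summands.

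The number of summands comes from reducing mod $\gamma$. As a $K$-vector space, $M(\varphi)/\gamma M(\varphi)=F/\mathfrak{m}F$ has $K$-dimension equal to the number $s$ of cyclic summands, since each $S/\gamma^tS$ with $t\ge1$ contributes exactly one copy of $K$. Its $k$-dimension is $n$, so $s=n/d$. (Consistently, Proposition \ref{prop:kernelsProp}(ii) with $r=1$ gives $n=m_1d$, so $s=m_1$.) We now have $\sum_{j=1}^{s}t_j=n\nu/d=s\nu$ together with $t_j\le\nu$ for every $j$. Hence $t_j=\nu$ for all $j$, and $M(\varphi)\cong S^{s}$ is free.

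The main obstacle is making the length and dimension bookkeeping rigorous. One must check that the $K$-structure on $F/\mathfrak{m}F$ is compatible with the $k$-structure, which holds because $K$ has $k$-dimension $d$. One must also check that the $R$-length of $S/\gamma^tS$ is $td$, which follows from the freeness of $S$ over $R$ and Lemma \ref{lem:uniqueExpressionNorton2000}. As an alternative route, one could exhibit a nonzero $w\in\gamma^{\nu-1}F$ generating each summand, but the counting argument avoids constructing explicit generators.
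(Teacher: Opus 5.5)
Your proof is correct, but it reaches the conclusion by a different mechanism than the paper. Both arguments start from the decomposition $M(\varphi)\simeq\bigoplus_j S/\gamma^{t_j}S$ given by the structure theorem over the SPIR $S=R[x]/(\lambda_\varphi)$.

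The paper then argues summand by summand. Each $S/\gamma^{t_j}S$ is an $R$-direct summand of the free module $F$, so it is projective and therefore free over $R$ by Proposition \ref{prop:projectiveFreeOverLocal}. A nonzero free $R$-module is not killed by $\gamma^{\nu-1}$, whereas $\operatorname{Ann}_S(S/\gamma^{t_j}S)=\gamma^{t_j}S$; together these force $t_j=\nu$.

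You argue globally instead. Comparing $R$-lengths gives $d\sum_j t_j=n\nu$. Reducing modulo $\gamma$ gives the number of summands $s=n/d$. The bound $t_j\le\nu$ then forces equality in every summand.

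Your route avoids the fact that projective modules over a local ring are free. The price is more bookkeeping: freeness of $S$ over $R$, the $k$-dimension $d$ of each layer $\gamma^jS/\gamma^{j+1}S$, and compatibility of the $k$- and $K$-structures on $F/\mathfrak{m}F$. You handle all of these correctly. In exchange, your count yields the $S$-rank $n/d=m$ of $M(\varphi)$ at no extra cost, which the paper only records later in Proposition \ref{prop:decompAsDirectSumOfPhiCylicSubsWhenMinIrr}.

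The paper's argument is shorter and close in spirit to the alternative you mention at the end. It also shows that $\gamma^{\nu-1}$ acts nontrivially on each summand, but it gets this from projectivity rather than from explicit generators.
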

		\begin{proof}
			As $\mu(\lambda_\varphi)$ is irreducible, by Theorem \ref{thm:sqFreeImplyRegular}, $\varphi$ is regular and $N_\varphi=(\lambda_\varphi).$ It is clear that $(\lambda_\varphi)$ is the annihilator of $M(\varphi),$ and thus $M(\varphi)$ can naturally be viewed as an $R[x]/(\lambda_\varphi)$ module. Denote $S$ the ring $R[x]/(\lambda_\varphi).$ By Proposition \ref{prop:quotientByIrrIsSPIR}, $S$ is an SPIR with ideals $\gamma^kS=(\gamma^k + (\lambda_\varphi))$ for $0\leq k\leq \nu.$ We have then by  the structure theorem for finitely generated modules over a PIR \cite[Theorem 15.33]{brown1993matrices}, there exist an integer $q$ and $k_1\leq \dots \leq k_q $ in  $ \{1,\dots,\nu\}$ such that $M(\varphi) \simeq \bigoplus_{1\leq i\leq q} S/\gamma^{k_i}S.$  Hence, as $R$-modules, $F \simeq \bigoplus_{1\leq i\leq q} S/\gamma^{k_i}S,$ and thus, by Proposition \ref{prop:projectiveFreeOverLocal}, for all $1\leq i\leq q,$ $S/\gamma^{k_i}S$ is a free $R$-module, which implies that there exists an element $w_i\in S/\gamma^{k_i}S$ such that $\gamma^{\nu-1}w_i\neq 0,$ but as $\text{Ann}_S(S/\gamma^{k_i}S)=\gamma^{k_i}S,$ this means that for all $i,$ $k_i=\nu,$ and thus $M(\varphi)\simeq S^q$ is a free $S$-module.
		\end{proof}

		\begin{theorem}\label{thm:minSqFreeImplySemiSimple}
			The endomorphism $\varphi$ is semisimple.
		\end{theorem}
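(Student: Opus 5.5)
The plan is to reduce to the case where $\mu(\lambda_\varphi)$ is irreducible, and then to use the module structure over the SPIR $S=R[x]/(f_i)$.

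Let $W$ be a free $\varphi$-invariant submodule of $F$. By Proposition \ref{prop:invariantDirectSumWhenMinSqFree}, $W=\bigoplus_{i=1}^r W\cap V_i$, and each $W_i:=W\cap V_i$ is free and $\varphi_i$-invariant. Suppose that for each $i$ we find a $\varphi_i$-invariant complement $W_i'$ of $W_i$ in $V_i$. Then $W'=\bigoplus_i W_i'$ is $\varphi$-invariant and complements $W$ in $F=\bigoplus_i V_i$. So it suffices to treat $\varphi_i$ on $V_i$. By Proposition \ref{prop:kernelsProp}(i) and Theorem \ref{thm:sqFreeImplyRegular}, $\varphi_i$ is regular with $N_{\varphi_i}=(f_i)$, and $\mu(f_i)$ is irreducible. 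From now on we therefore assume $\mu(\lambda_\varphi)$ irreducible, and we set $S=R[x]/(\lambda_\varphi)$. By Proposition \ref{prop:quotientByIrrIsSPIR}, $S$ is an SPIR with maximal ideal $\gamma S$.

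By Proposition \ref{prop:associatedModuleIsFreeWhenMinIrr}, $M(\varphi)$ is a free $S$-module. The $\varphi$-invariant submodule $W$ is an $S$-submodule, and it is $R$-free. Consider the $S$-module $M(\varphi_{\restriction W})$. By the structure theorem over the PIR $S$ it is $\bigoplus S/\gamma^{k_j}S$, and each summand is $R$-free by Proposition \ref{prop:projectiveFreeOverLocal}. Exactly the argument in the proof of Proposition \ref{prop:associatedModuleIsFreeWhenMinIrr} then forces $k_j=\nu$, so $W$ is a free $S$-module.

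The key step, and the main obstacle, is to show that a free $S$-submodule of a finitely generated free $S$-module is a direct summand as an $S$-module. Since $S$ is an Artinian local ring, it is weakly semi-Steinitz (by \cite[Corollary 2.5]{nashier1991steinitz}, as quoted earlier). Proposition \ref{prop:steinitz}(iii), applied over $S$ rather than $R$, then gives $M(\varphi)=W\oplus W'$ with $W'$ an $S$-submodule. One has to check that this proposition is stated for an arbitrary Artinian local ring, which it is: Section \ref{sec:artinianRings} is written for such rings. An $S$-submodule is exactly a $\varphi$-invariant $R$-submodule, so $W'$ is $\varphi$-invariant. It is also $R$-free, being a direct summand of $F$ over a local ring (Proposition \ref{prop:projectiveFreeOverLocal}).

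If one prefers to avoid quoting the Steinitz property over $S$, the fallback is direct. Take an $S$-basis $w_1,\dots,w_q$ of $W$. Their images in $M(\varphi)/\gamma M(\varphi)$ are independent over the field $S/\gamma S$: a relation $\sum s_jw_j\in\gamma M(\varphi)$ with some $s_j$ a unit would, after multiplying by $\gamma^{\nu-1}$, give a nontrivial relation in $W$. One then extends to an $S$-basis of $M(\varphi)$ via Corollary \ref{cor:independenceTransfer} over $S$, and takes $W'$ to be the span of the added vectors.
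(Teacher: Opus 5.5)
Your proposal is correct and follows essentially the same route as the paper. You make the same reduction to the case $\mu(\lambda_\varphi)$ irreducible via Proposition \ref{prop:invariantDirectSumWhenMinSqFree}, show that $W$ is a free module over the SPIR $S=R[x]/(\lambda_\varphi)$ (rerunning the argument of Proposition \ref{prop:associatedModuleIsFreeWhenMinIrr} directly over $S$ rather than applying it to $\varphi_{\restriction W}$, which spares you from identifying $\lambda_{\varphi_{\restriction W}}$), and then apply Proposition \ref{prop:steinitz}(iii) over $S$. Your fallback via Corollary \ref{cor:independenceTransfer} over $S$ is also valid; it only replaces the citation of the weakly semi-Steinitz property with a Nakayama-type basis extension.
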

		\begin{proof}
		 Let $W$ be a free $\varphi$-invariant submodule of $F.$ From Proposition \ref{prop:invariantDirectSumWhenMinSqFree} we have that $W=\bigoplus_{i=1}^{r}W\cap V_i$ with each $W\cap V_i$ free and $\varphi$-invariant. Hence to prove that $W$ have a $\varphi$-invariant complement, it suffices to prove that for each $i,$  $W\cap V_i$ has a $\varphi$-invariant complement in $V_i.$ We can then \textit{w.l.o.g} suppose that $\lambda_\varphi$ is basic irreducible. Denote $S$ the ring $R[x]/(\lambda_\varphi).$ Since $W$ is free and $\varphi_{\restriction W}$ is regular of minimal polynomial $\lambda_\varphi,$ by Proposition \ref{prop:associatedModuleIsFreeWhenMinIrr}, $W$ can be seen as a free $S$-submodule of $M(\varphi).$ Moreover, from Proposition \ref{prop:quotientByIrrIsSPIR}, $S$ is an SPIR, hence it is weakly semi-Steinitz and by Proposition \ref{prop:steinitz}(iii), there exists an $S$-submodule $W'$ of $M(\varphi)$ such that $M(\varphi)=W\oplus W'.$ But since $W'$ is an $R[x]/(\lambda_\varphi)$-submodule of $M(\varphi),$ it can be seen as a $\varphi$-invariant $R$-module, hence as $R$-modules, $F=W\oplus W'$ with  $W'$ $\varphi$-invariant.
		\end{proof}
		
		\begin{remark}\label{rk:semiSimpleNoImplySqFree}
			While an endomorphism of a finite dimensional vector space is semisimple if and only if its minimal polynomial is square-free (see \cite[Theorem 11, Chapter 7]{hoffmann1971linear}), the converse of Theorem \ref{thm:minSqFreeImplySemiSimple} does not hold, as shown by the following counterexample. Let $R=\mathbb{Z}/4\mathbb{Z}$ and consider the matrix $A=\begin{pmatrix}
				0 & 1\\
				2 & 0
			\end{pmatrix} \in M_2(R).$ It is easy to see that $\lambda_A=x^2-2,$ and $\mu(\lambda_A)=x^2$ is not square-free. We claim that $R^2$ has no nonzero proper free $\varphi_A$-invariant submodules. Indeed otherwise, let $M$ be such a submodule. As the rank of $M$ must be $1,$ let $w=(w_1,w_2)\in R^2$ such that $M=Rw$ and $\mu(w)\neq 0.$ We have then $\varphi_A(w)=(w_2,2w_1)\in M$ which means that there exists $\alpha\in R$ such that $w_2=\alpha w_1$ and $2w_1=\alpha w_2$ and this implies that $w_1,w_2 \in 2R$ which contradicts the assumption that $\mu(w)\neq 0.$ We have then, $\varphi_A$ is trivially semisimple despite the fact that $\mu(\lambda_A)$ is not square-free.
		\end{remark}

		\section{Invariant submodules}\label{sec:invariance}

		We begin our investigation of the $\varphi$-invariant submodules of $F$ by considering the simplest type, namely, the smallest $\varphi$-invariant submodule containing a given element of $F.$				
		\begin{definition}
			 For each $w\in F,$ the \textit{$\varphi$-cyclic submodule} of $F$ generated by $w$ is the submodule of all elements of the form $g(\varphi)(w)$ with $g\in R[x].$ We denote it $Z(w,\varphi).$
		\end{definition}
		\begin{remark}
			For an element $w\in F,$  $Z(w,\varphi)$ can be viewed as an $R[x]$-submodule of $M(\varphi),$ the $R[x]$-module associated with $\varphi.$ In this setting it is easy to see that $Z(w,\varphi)$ corresponds to the cyclic module generated by $w$ in $M(\varphi),$ and conversely, for each $w\in M(\varphi),$ $R[x]w$ corresponds to $Z(w,\varphi).$
		\end{remark}
		
		For a $k$-vector space $V$ and $\psi\in \text{End}_k(V),$ for each vector $w\in V,$ we denote $N_\psi(w)$ the ideal of $k[x]$ such that $p\in N_\psi(w)$ if and only $p(\psi)(w)=0.$ The next lemma uses the fact that for each $w\in F,$ $N_{\mu(\varphi)}(w+\mathfrak{m}F)$ is a principal ideal that contains $1$ if and only if $w\in\mathfrak{m}F.$
		\begin{lemma}\label{lem:cyclicSubModWhenMinIsIrreducible}
			Suppose that $\lambda_\varphi$ is basic irreducible. Then for each nonzero element $w\in F,$  the set $\mathcal{B}=\{\varphi^i(w) ~|~0\leq i\leq \deg(\lambda_\varphi)-1\}$ 
			is a minimal generating set of $Z(w,\varphi).$ Furthermore, $Z(w,\varphi)$ is free if and only if $w \notin \mathfrak{m}F,$ and in this case, the restriction of $\varphi$ to $Z(w,\varphi)$ is a regular endomorphism with both minimal and characteristic polynomials equal to $\lambda_\varphi.$ 
		\end{lemma}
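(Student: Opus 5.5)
Let $d=\deg(\lambda_\varphi)$ and write the nonzero $w$ as $w=\gamma^s w'$ with $w'\notin\mathfrak{m}F$ and $0\leq s\leq\nu-1$ (Lemma \ref{lem:uniqueExpressionSPIR}). I would first settle the case $s=0$ and then deduce everything else from it by multiplying by $\gamma^s$.

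\emph{Spanning.} Since $N_\varphi=(\lambda_\varphi)$ with $\lambda_\varphi$ monic, Euclidean division (Lemma \ref{lem:EuclideanDiv}) reduces every $g(\varphi)(w)$ to $r(\varphi)(w)$ with $\deg r<d$. Hence $\mathcal{B}$ generates $Z(w,\varphi)$.

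\emph{Case $w\notin\mathfrak{m}F$.} The ideal $N_{\mu(\varphi)}(w+\mathfrak{m}F)$ is principal, proper, and contains $\mu(\lambda_\varphi)$. As $\mu(\lambda_\varphi)$ is irreducible, this ideal equals $(\mu(\lambda_\varphi))$. Consequently no nonzero polynomial of degree $<d$ kills $w+\mathfrak{m}F$, so the vectors $\mu(\varphi)^i(w+\mathfrak{m}F)=\varphi^i(w)+\mathfrak{m}F$ for $0\le i\le d-1$ are $k$-linearly independent. Corollary \ref{cor:independenceTransfer} then gives that $\mathcal{B}$ is linearly independent in $F$. Together with spanning, $\mathcal{B}$ is a basis, so $Z(w,\varphi)$ is free and $\mathcal{B}$ is minimal.

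In the basis $\mathcal{B}$, the restriction of $\varphi$ is represented by $com(\lambda_\varphi)$, since $\varphi^d(w)$ is given by the coefficients of $\lambda_\varphi$. So its characteristic polynomial is $\lambda_\varphi$, which is residually irreducible and hence residually square-free. Proposition \ref{prop:charaSqFreeImpliesRegular} then shows the restriction is regular with null ideal $(\lambda_\varphi)$, so its minimal polynomial is also $\lambda_\varphi$.

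\emph{Case $s\geq1$.} Here $Z(w,\varphi)=\gamma^s Z(w',\varphi)$. Since $\mathcal{B}'=\{\varphi^i(w')\}_{0\le i\le d-1}$ is a basis of the free module $Z(w',\varphi)$ of rank $d$, the module $\gamma^sZ(w',\varphi)$ is $\mathfrak{m}^s$ times a free module. By Proposition \ref{prop:moduleChainHasSameMinGenNum}(b) applied inside $Z(w',\varphi)$, the set $\gamma^s\mathcal{B}'=\mathcal{B}$ is a minimal generating set, because $s\le\nu-1$.

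\emph{Non-freeness.} Every element of $Z(w,\varphi)$ is then killed by $\gamma^{\nu-s}$ with $\nu-s<\nu$, whereas a nonzero free $R$-module contains elements not killed by $\gamma^{\nu-1}$. So $Z(w,\varphi)$ is not free when $w\in\mathfrak{m}F$. This proves the equivalence.

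The main obstacle I expect is the independence step. It needs the residual annihilator of $w+\mathfrak{m}F$ to be exactly $(\mu(\lambda_\varphi))$. This uses only that $\mu(\varphi)$ is killed by the irreducible $\mu(\lambda_\varphi)$, and that the annihilator is proper because $w\notin\mathfrak{m}F$. The rest is bookkeeping with earlier results.
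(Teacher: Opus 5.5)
Your proof is correct and follows the paper's argument essentially step for step: the residual annihilator of $w+\mathfrak{m}F$ is $(\mu(\lambda_\varphi))$, independence transfers via Corollary \ref{cor:independenceTransfer}, and the case $w=\gamma^s w'$ reduces to Proposition \ref{prop:moduleChainHasSameMinGenNum} applied inside $Z(w',\varphi)$. The only cosmetic differences are these: you obtain regularity and $\lambda_\varphi$ as minimal and characteristic polynomial through the companion matrix and Proposition \ref{prop:charaSqFreeImpliesRegular} rather than through Theorem \ref{thm:sqFreeImplyRegular}, and you rule out freeness by noting that $\gamma^{\nu-1}$ annihilates $Z(w,\varphi)$ rather than by invoking Lemma \ref{lem:minimalGenSetIsLinIndep}.
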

		\begin{proof}
			Suppose at first that $ w\notin  \mathfrak{m}F.$ As $\lambda_\varphi(\varphi)(w)=0$ and $\lambda_\varphi$ is monic, it is clear that $\mathcal{B}$ generates $Z(w,\varphi).$ We claim that $\mathcal{B}$ is linearly independent. Otherwise, by Corollary \ref{cor:independenceTransfer}, $\mu(\mathcal{B})=\{\varphi^i(w) + \mathfrak{m}F ~|~0\leq i\leq \deg(\lambda_\varphi)-1\}$ is linearly dependent in $F/\mathfrak{m}F.$ But from Diagram \ref{eq:diagram}, we see that $\mu(\mathcal{B})=\{\mu(\varphi^i) (w+ \mathfrak{m}F) ~|~ 0\leq i\leq \deg(\lambda_\varphi)-1\}=\{\mu(\varphi)^i (w+ \mathfrak{m}F) ~|~ 0\leq i\leq \deg(\lambda_\varphi)-1\}.$ Hence, $\mu(\mathcal{B})$ being dependent means that there exists a monic polynomial $f\in  k[x]$ such that $\deg(f) \leq \deg(\lambda_\varphi)-1$ and $f(\mu(\varphi))(w+\mathfrak{m}F)=0,$ i.e. $f\in N_{\mu(\varphi)}(w+\mathfrak{m}F).$ As $w\notin \mathfrak{m}F,$ $N_{\mu(\varphi)}(w+\mathfrak{m}F)\neq k[x],$ hence as the irreducible polynomial $\mu(\lambda_\varphi)$ is in $N_{\mu(\varphi)}(w+\mathfrak{m}F)$ we have $ N_{\mu(\varphi)}(w+\mathfrak{m}F)=(\mu(\lambda_\varphi)),$ and that implies that $f \in (\mu(\lambda_\varphi))$ which contradicts the fact that $\deg(f) \leq \deg(\lambda_\varphi)-1.$ We have then $Z(w,\varphi)$ is a free submodule of $F$ with basis $\mathcal{B}.$ Moreover, $\varphi_{\restriction Z(w,\varphi)}$ is annihilated by $\lambda_\varphi,$ hence by Theorem \ref{thm:sqFreeImplyRegular}, it is regular, and as $\operatorname{rank}_R(Z(w,\varphi))=\deg(\lambda_\varphi)$ and $\lambda_\varphi$ is irreducible, $\lambda_{\varphi_{\restriction Z(w,\varphi)}}=\rchi_{\varphi_{\restriction Z(w,\varphi)}}=\lambda_\varphi.$    
						
				Now suppose that $w\in \mathfrak{m}F.$ By Lemma \ref{lem:uniqueExpressionSPIR}, there exist $0\leq s\leq \nu-1$ and $w'\in F\setminus \mathfrak{m}F$ such that $w=\gamma^sw'.$ It is straightforward to see that $Z(w,\varphi)=Z(\gamma^sw',\varphi)=\mathfrak{m}^sZ(w',\varphi).$ So as $Z(w',\varphi)$ is free with basis $\{\varphi^i(w') ~|~0\leq i\leq \deg(\lambda_\varphi)-1\},$ multiplying by $\gamma^s$ yields by Proposition \ref{prop:moduleChainHasSameMinGenNum} that $\{\varphi^i(w) ~|~0\leq i\leq \deg(\lambda_\varphi)-1\}$ is a minimal generating set of $Z(w,\varphi).$ Moreover, as this minimal generating set is clearly linearly dependent, by Lemma \ref{lem:minimalGenSetIsLinIndep}, $Z(w,\varphi)$ is not free. 
		\end{proof}
		
		\begin{lemma}\label{lem:sumCoprimeCyclicSubModules}
			 For all $1\leq i\leq r,$ let $w_i \in V_i,$ and denote $w=w_1+\dots+w_r.$ We have $Z(w,\varphi)=Z(w_1,\varphi)\oplus\dots\oplus Z(w_r,\varphi).$ Moreover, $Z(w,\varphi)$ is free if and only if, for every $i,$ either $w_i=0$ or $w_i\notin \mathfrak{m}V_i.$
		\end{lemma}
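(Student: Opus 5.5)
First, the decomposition. Each $V_i$ is $\varphi$-invariant, so $Z(w_i,\varphi)\subseteq V_i$ for every $i$. Because $F=\bigoplus_i V_i$ by Proposition \ref{prop:kernelsProp}, the sum $Z(w_1,\varphi)+\dots+Z(w_r,\varphi)$ is direct. Moreover, for any $g\in R[x]$ we have $g(\varphi)(w)=\sum_i g(\varphi)(w_i)$, which gives $Z(w,\varphi)\subseteq\bigoplus_i Z(w_i,\varphi)$.

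For the reverse inclusion I use the same idempotent trick as in the proof of Proposition \ref{prop:invariantDirectSumWhenMinSqFree}. Put $\hat f_i=\prod_{j\neq i}f_j$ and choose $\alpha_i,\beta_i$ with $\alpha_if_i+\beta_i\hat f_i=1$. Then $(\beta_i\hat f_i)(\varphi)(w)=w_i$, so $w_i\in Z(w,\varphi)$. Since $Z(w,\varphi)$ is $\varphi$-invariant, this gives $Z(w_i,\varphi)\subseteq Z(w,\varphi)$ for each $i$, and hence equality of the two modules.

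Next, freeness. By Proposition \ref{prop:projectiveFreeOverLocal}, a direct summand of a free module over the local ring $R$ is free. Conversely, a finite direct sum of free modules is free. So $Z(w,\varphi)$ is free if and only if every $Z(w_i,\varphi)$ is free.

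I now examine each $Z(w_i,\varphi)$ as a submodule of $V_i$. Here $V_i$ is free by Proposition \ref{prop:kernelsProp}(i), it is $\varphi_i$-invariant, and $Z(w_i,\varphi)=Z(w_i,\varphi_i)$. If $w_i=0$, then $Z(w_i,\varphi)=0$ is trivially free. Otherwise I apply Lemma \ref{lem:cyclicSubModWhenMinIsIrreducible} to $\varphi_i$ on $V_i$. This requires that $\lambda_{\varphi_i}$ be basic irreducible. Since $f_i(\varphi_i)=0$ and $f_i$ is residually square-free, Theorem \ref{thm:sqFreeImplyRegular} shows that $\varphi_i$ is regular and that $\lambda_{\varphi_i}$ is a monic divisor of $f_i$. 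Its reduction $\mu(\lambda_{\varphi_i})$ divides the irreducible $\mu(f_i)$. It is nonconstant, because $V_i\neq0$ whenever $w_i\neq 0$. Hence $\mu(\lambda_{\varphi_i})=\mu(f_i)$, so $\lambda_{\varphi_i}$ is basic irreducible. (The uniqueness statement in Proposition \ref{prop:sqFreeUniqIrrFacto} even gives $\lambda_{\varphi_i}=f_i$.) The lemma then says $Z(w_i,\varphi_i)$ is free if and only if $w_i\notin\mathfrak{m}V_i$. Combining the cases gives the stated criterion.

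The step I expect to need the most care is checking the hypothesis of Lemma \ref{lem:cyclicSubModWhenMinIsIrreducible}: that the minimal polynomial of $\varphi_i$ is basic irreducible, rather than just divisible by something related to $f_i$. I also need the easy fact that $\mathfrak{m}V_i=V_i\cap\mathfrak{m}F$, which follows from $F=\bigoplus_j V_j$. Everything else is routine bookkeeping with the direct sum.
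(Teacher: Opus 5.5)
Your proof is correct and follows the paper's argument. You use the same coprimality trick $(\beta_i\hat f_i)(\varphi)(w)=w_i$ to get the decomposition, and you reduce freeness to the summands via Proposition \ref{prop:projectiveFreeOverLocal} and Lemma \ref{lem:cyclicSubModWhenMinIsIrreducible} applied on each $V_i$; your separate handling of $w_i=0$ and your check that $\lambda_{\varphi_i}=f_i$ (which the paper takes from Proposition \ref{prop:kernelsProp}(i)) are slightly more careful than the paper's own write-up.
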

		
		\begin{proof}
			It is clear that $Z(w,\varphi)\subseteq Z(w_1,\varphi)+\dots+ Z(w_r,\varphi).$ For $i\in \{1,\dots,r\},$ as $f_1,\dots,f_r$ are pairwise coprime, there exists $u,v\in R[x]$ such that  $uf_i + v\prod_{\substack{1\leq j\leq r \\ j\neq i}}f_j = 1.$ As $Z(w,\varphi)$ is $\varphi$-invariant, $(v\prod_{\substack{1\leq j\leq r \\ j\neq i}}f_j)(\varphi)(w)=(v\prod_{\substack{1\leq j\leq r \\ j\neq i}}f_j)(\varphi)(w_i)=w_i-(uf_i)(\varphi)(w_i)=w_i\in Z(w,\varphi).$ Hence, $Z(w,\varphi)= Z(w_1,\varphi)+\dots+ Z(w_r,\varphi),$ and as $Z(w_i,\varphi)\subseteq V_i$ for all $i,$ by Proposition \ref{prop:kernelsProp}, the sum is direct. Moreover, as $V_i$ is free, using Lemma $\ref{lem:cyclicSubModWhenMinIsIrreducible}$, if $w_i\notin \mathfrak{m}V_i$ for all $i,$ then $Z(w,\varphi)$ is free. Conversely, by Proposition \ref{prop:projectiveFreeOverLocal}, if $Z(w,\varphi)$ is free, then $Z(w_i,\varphi)$ is also free for all $i,$ hence once again by Lemma \ref{lem:cyclicSubModWhenMinIsIrreducible},  $w_i\notin \mathfrak{m}V_i$ for all $i.$
		\end{proof}

		\begin{proposition}\label{prop:cyclicSubModWhenMinIsSqFree}
		 We have the following:
			\begin{itemize}
				\item[(i)] For each nonzero element $w\in F,$ let $w=w_1+\dots+w_r$ be such that $w_i\in V_i$ for all $i,$ and denote $J=\{j :  w_j \neq 0\}.$ Then  $\{\varphi^i(w)~|~0\leq i < \sum_{j\in J}\deg(f_j)\}$ is a minimal generating set of $Z(w,\varphi).$ Moreover, $Z(w,\varphi)$ is free if and only if for all $j\in J,$ $w_j\notin \mathfrak{m}V_j,$ and in this case, the restriction of $\varphi$ to $Z(w,\varphi)$ is regular  with both minimal and characteristic polynomials equal to $\prod_{j\in J}f_j.$
				\item[(ii)] Conversely, for every nonempty subset $J$ of $\{1,\dots,r\},$ there exists an element $w\in F$ such that $Z(w,\varphi)$ is free and the restriction of $\varphi$  to $Z(w,\varphi)$ is a regular endomorphism of minimal polynomial $\prod_{j\in J}f_j.$
				
			\end{itemize}

		\end{proposition}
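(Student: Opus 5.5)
The plan is to reduce everything to the components $V_j$ using Lemma \ref{lem:sumCoprimeCyclicSubModules}, and then apply Lemma \ref{lem:cyclicSubModWhenMinIsIrreducible} to each $\varphi_j$.

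\textbf{Part (i).} By Proposition \ref{prop:kernelsProp}(i), each $V_j$ is free and $\varphi$-invariant, and $\varphi_j$ is regular with $N_{\varphi_j}=(f_j)$. This applies because $N_\varphi=(\lambda_\varphi)$ by Theorem \ref{thm:sqFreeImplyRegular}. Hence $\lambda_{\varphi_j}=f_j$ is basic irreducible, and Lemma \ref{lem:cyclicSubModWhenMinIsIrreducible} applies to $\varphi_j$ on $V_j$.

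By Lemma \ref{lem:sumCoprimeCyclicSubModules}, $Z(w,\varphi)=\bigoplus_{j\in J}Z(w_j,\varphi)$. For each $j\in J$, Lemma \ref{lem:cyclicSubModWhenMinIsIrreducible} gives that $\nu_R(Z(w_j,\varphi))=\deg f_j$, so by additivity of $\nu_R$ over direct sums, $\nu_R(Z(w,\varphi))=d:=\sum_{j\in J}\deg f_j$. It then suffices to show that $\{\varphi^i(w)\}_{0\le i<d}$ generates $Z(w,\varphi)$, since a generating set with exactly $\nu_R$ elements is minimal. Set $g=\prod_{j\in J}f_j$, which is monic of degree $d$. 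Then $g(\varphi)(w)=\sum_{j\in J}\hat g_j(\varphi)f_j(\varphi)(w_j)=0$, where $\hat g_j$ is the cofactor of $f_j$ in $g$. Euclidean division by $g$ (Lemma \ref{lem:EuclideanDiv}) then shows that every $h(\varphi)(w)$ is an $R$-combination of $\varphi^i(w)$ with $i<d$.

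Freeness is exactly the criterion of Lemma \ref{lem:sumCoprimeCyclicSubModules}. Regularity of the restriction follows from Remark \ref{rk:localRegularityConverse}. For the polynomials: the characteristic polynomial of the restriction is $\prod_{j\in J}\rchi_{\varphi|Z(w_j,\varphi)}=\prod_{j\in J}f_j=g$, using Proposition \ref{prop:charaDividesChara} for the direct sum together with Lemma \ref{lem:cyclicSubModWhenMinIsIrreducible}. The minimal polynomial divides $g$ because $g$ annihilates $Z(w,\varphi)$. Since $Z(w,\varphi)$ is free of rank $d$ with basis $\{\varphi^i(w)\}_{i<d}$, any monic annihilator of degree less than $d$ applied to $w$ would give a nontrivial linear relation among these basis elements. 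Hence the minimal polynomial has degree $d$ and equals $g$ by uniqueness.

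\textbf{Part (ii).} Given a nonempty $J$, we need $w_j\in V_j\setminus\mathfrak{m}V_j$ for each $j\in J$. This is possible because $V_j$ is free of rank $m_j\deg f_j\ge 1$ by Proposition \ref{prop:kernelsProp}(ii), using $m_j\ge1$. Take $w_j$ to be any basis element of $V_j$, put $w=\sum_{j\in J}w_j$, and apply (i).

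\textbf{Main obstacle.} The only delicate point is keeping $\mathfrak{m}V_j$ and $\mathfrak{m}F\cap V_j$ straight. These coincide because $V_j$ is a direct summand of $F$, so $w_j\notin\mathfrak{m}F$ if and only if $w_j\notin\mathfrak{m}V_j$. This is what lets Lemma \ref{lem:cyclicSubModWhenMinIsIrreducible}, applied to $\varphi_j$ on $V_j$, be used here; the remaining steps are routine.
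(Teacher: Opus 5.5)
Your proposal is correct and follows essentially the same route as the paper: split $Z(w,\varphi)$ via Lemma \ref{lem:sumCoprimeCyclicSubModules}, obtain $\nu_R(Z(w,\varphi))=\sum_{j\in J}\deg f_j$ from Lemma \ref{lem:cyclicSubModWhenMinIsIrreducible} and additivity of $\nu_R$, observe that the $d$ iterates generate because the monic polynomial $\prod_{j\in J}f_j$ kills $w$, and for (ii) pick $w_j\in V_j\setminus\mathfrak{m}V_j$. Your explicit computation of the characteristic and minimal polynomials of the restriction (via Proposition \ref{prop:charaDividesChara} and linear independence of the cyclic basis), and your check that $V_j\setminus\mathfrak{m}V_j\neq\emptyset$, fill in details the paper leaves to ``follows exactly like Lemma \ref{lem:cyclicSubModWhenMinIsIrreducible}'' and ``we can easily check''.
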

		\begin{proof}
			\textit{(i)} From Lemma \ref{lem:sumCoprimeCyclicSubModules}, $Z(w,\varphi)=\bigoplus_{j\in J} Z(w_j,\varphi)$ and it is free if and only if $w_j\notin \mathfrak{m}V_j$ for all $j\in J.$ By Proposition \ref{prop:kernelsProp}, $V_j$ is free and for all $j,$ $\varphi_j$  is regular with minimal polynomial $f_j.$ Hence by Lemma \ref{lem:cyclicSubModWhenMinIsIrreducible}, for all $j\in J,$ $\{\varphi^i(w_j) ~|~0\leq i< \deg(f_j)\}$ is a minimal generating set of $Z(w_j,\varphi),$ and thus by Theorem \ref{thm:matsumura2.3},  $Z(w,\varphi)$ has minimal number of generators $\sum_{j\in J}\deg(f_j).$ Moreover, as $\prod_{j\in J}f_j$ annihilates $Z(w,\varphi),$ $\{\varphi^i(w)~|~0\leq i < \sum_{j\in J}\deg(f_j)\}$ generates $Z(w,\varphi),$ hence as it has cardinality $\sum_{j\in J}\deg(f_j),$ it is a minimal generating set. The rest follows exactly like Lemma \ref{lem:cyclicSubModWhenMinIsIrreducible}.
			
			\textit{(ii)} For every $j\in J,$ let $w_j \in V_j \setminus \mathfrak{m}V_j,$ and let $w=\sum_{j\in J}w_j.$ Using \textit{(i)}, we can easily check that $Z(w,\varphi)$ verifies the statement.	
		\end{proof}
		
		The following corollary concerning the minimal number of generators of $\varphi$-cyclic modules is immediate.
		\begin{corollary}\label{cor:minNumOfGenOfCyclicSubModWhenMinIsSqFree}
			\begin{equation*}
				\left\{\nu_R(Z(w,\varphi)) ~|~ w\in F\right\} = \left\{\sum_{j\in J}\deg f_j ~|~ J\subseteq \{1,\dots,r\} \right\}.
			\end{equation*} 
		\end{corollary}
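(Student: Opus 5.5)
The statement is a set equality, so I will prove both inclusions. Both follow from Proposition \ref{prop:cyclicSubModWhenMinIsSqFree}(i), once the case $w=0$ (equivalently $J=\emptyset$) is handled separately. The convention is that an empty sum is $0$ and that $\nu_R(0)=0$.

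For the inclusion ``$\subseteq$'', take $w\in F$.
\begin{itemize}
\item If $w=0$, then $Z(w,\varphi)=0$, so $\nu_R(Z(w,\varphi))=0$. This is the sum indexed by $J=\emptyset$.
\item If $w\neq 0$, decompose $w=w_1+\dots+w_r$ along $F=\bigoplus_{i=1}^r V_i$ (Proposition \ref{prop:kernelsProp}) and set $J=\{j : w_j\neq 0\}$.
\item Proposition \ref{prop:cyclicSubModWhenMinIsSqFree}(i) states that $\{\varphi^i(w) \mid 0\leq i<\sum_{j\in J}\deg f_j\}$ is a minimal generating set of $Z(w,\varphi)$. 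This holds with no freeness assumption on $Z(w,\varphi)$.
\item By Theorem \ref{thm:matsumura2.3}(ii), all minimal generating sets have the same cardinality. Hence $\nu_R(Z(w,\varphi))=\sum_{j\in J}\deg f_j$.
\end{itemize}

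For the inclusion ``$\supseteq$'', fix $J\subseteq\{1,\dots,r\}$. If $J=\emptyset$, take $w=0$. Otherwise the plan is to choose, for each $j\in J$, an element $w_j\in V_j\setminus\{0\}$, and to set $w=\sum_{j\in J}w_j$. Then $w\neq 0$, its support is exactly $J$, and part (i) again gives $\nu_R(Z(w,\varphi))=\sum_{j\in J}\deg f_j$.

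The only point needing care is that each $V_j$ is nonzero, so that such a $w_j$ exists. By Proposition \ref{prop:kernelsProp}(ii), $\operatorname{rank}_R(V_j)=m_j\deg(f_j)$. Here $m_j\geq 1$ by the Notation: $\mu(f_j)$ divides $\mu(\lambda_\varphi)=\lambda_{\mu(\varphi)}$, and $\lambda_{\mu(\varphi)}$ divides $\rchi_{\mu(\varphi)}$. Also $\deg f_j\geq 1$, since $f_j$ is basic irreducible. Hence $V_j$ is free of positive rank, so $V_j\neq 0$; in fact $V_j\setminus\mathfrak{m}V_j\neq\emptyset$, which is the choice made in Proposition \ref{prop:cyclicSubModWhenMinIsSqFree}(ii).

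The main obstacle is simply this nonemptiness check for the $V_j$ together with the bookkeeping for $J=\emptyset$. Everything else is a direct reading of Proposition \ref{prop:cyclicSubModWhenMinIsSqFree}.
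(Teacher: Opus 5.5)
Your proof is correct and takes the same route as the paper, which treats the corollary as immediate from Proposition~\ref{prop:cyclicSubModWhenMinIsSqFree}: part (i) gives $\nu_R(Z(w,\varphi))=\sum_{j\in J}\deg f_j$ for $w\neq 0$, and choosing nonzero components in the $V_j$ (as in part (ii)) realizes every nonempty $J$. You also handle $J=\emptyset$ via $w=0$ and check that each $V_j$ is nonzero (its rank is $m_j\deg f_j\geq 1$); these are exactly the details the paper leaves implicit.
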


		Having examined the $\varphi$-cyclic submodules of $F,$ our next goal is to leverage them to characterize general $\varphi$-invariant submodules. A key step toward this is provided by the following proposition.

			\begin{proposition}\label{prop:decompAsDirectSumOfPhiCylicSubsWhenMinIrr}
				Suppose that $\lambda_\varphi$ is basic irreducible, and let $m\in \mathbb{N}$ such that $\rchi_{\mu(\varphi)}=\mu(\lambda_\varphi)^m.$ Then there exist $w_1,\dots,w_m \in F\setminus \mathfrak{m}F$ such that $F=Z(w_1,\varphi)\oplus \dots\oplus Z(w_m,\varphi).$
				In particular, as an $R[x]/(\lambda_\varphi)$-module, $M(\varphi)$ has basis $\{w_1,\dots,w_m\}$ and is of rank $m.$
			\end{proposition}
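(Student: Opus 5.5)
The plan is to use Proposition \ref{prop:associatedModuleIsFreeWhenMinIrr}. With $S=R[x]/(\lambda_\varphi)$, that result says $M(\varphi)$ is a free $S$-module, so $M(\varphi)\simeq S^q$ for some integer $q$. Choose an $S$-basis $\{w_1,\dots,w_q\}$ of $M(\varphi)$. For each $i$, the cyclic $S$-submodule $Sw_i$ is exactly $R[x]w_i$, and by the remark following the definition of $\varphi$-cyclic submodules this is $Z(w_i,\varphi)$. Freeness of the basis then gives $F=Z(w_1,\varphi)\oplus\dots\oplus Z(w_q,\varphi)$ as $R$-modules. It remains to show two things: each $w_i\notin\mathfrak{m}F$, and $q=m$.

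For the first point, I will argue from freeness of the summands. Each $Z(w_i,\varphi)$ is a direct summand of the free $R$-module $F$, so it is projective and hence free by Proposition \ref{prop:projectiveFreeOverLocal}. Moreover $w_i\neq 0$, since it belongs to a basis. Lemma \ref{lem:cyclicSubModWhenMinIsIrreducible} says a nonzero $w$ has $Z(w,\varphi)$ free if and only if $w\notin\mathfrak{m}F$, so $w_i\notin\mathfrak{m}F$. The same lemma gives $\operatorname{rank}_R Z(w_i,\varphi)=\deg(\lambda_\varphi)$.

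For the count, ranks add over the direct sum, so $n=q\deg(\lambda_\varphi)$. On the other side, $\deg\rchi_{\mu(\varphi)}=n$ and $\rchi_{\mu(\varphi)}=\mu(\lambda_\varphi)^m$ with $\deg\mu(\lambda_\varphi)=\deg\lambda_\varphi$, because $\lambda_\varphi$ is monic. Hence $n=m\deg(\lambda_\varphi)$, and therefore $q=m$. The statement that $M(\varphi)$ has $S$-basis $\{w_1,\dots,w_m\}$ and rank $m$ is then just the choice of basis made at the start.

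The one step I would check carefully is the identification $Sw_i=Z(w_i,\varphi)$ together with the claim that the $S$-direct sum is also an $R$-direct sum. Both follow because $S$-submodules of $M(\varphi)$ are precisely the $\varphi$-invariant $R$-submodules, and $R$ acts through $S$. Beyond that I expect no real obstacle, since the substance is already contained in Proposition \ref{prop:associatedModuleIsFreeWhenMinIrr}.
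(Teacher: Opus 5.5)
Your proof is correct, but it runs in the opposite direction from the paper's.

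\textbf{The paper's route.} It builds the decomposition greedily. It takes an arbitrary $w_1\in F\setminus\mathfrak{m}F$ and uses Lemma~\ref{lem:cyclicSubModWhenMinIsIrreducible} to see that $Z(w_1,\varphi)$ is free of rank $\deg(\lambda_\varphi)$. It then invokes semisimplicity (Theorem~\ref{thm:minSqFreeImplySemiSimple}) to split off a free $\varphi$-invariant complement of rank $(m-1)\deg(\lambda_\varphi)$, and iterates. Only at the end does it call on Proposition~\ref{prop:associatedModuleIsFreeWhenMinIrr} to conclude that the resulting $w_i$ form an $S$-basis.

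\textbf{Your route.} You start from an $S$-basis supplied by Proposition~\ref{prop:associatedModuleIsFreeWhenMinIrr}. You then recover $w_i\notin\mathfrak{m}F$ from freeness of direct summands, and $q=m$ from a rank count.

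\textbf{What each buys.} Your route is more economical. It never needs Theorem~\ref{thm:minSqFreeImplySemiSimple}, whose own proof goes through Proposition~\ref{prop:associatedModuleIsFreeWhenMinIrr} and the Steinitz property of $S$ anyway. It also obtains $q=m$ by a direct degree comparison rather than from the number of iterations.

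The paper's route, in exchange, shows that any $w_1\notin\mathfrak{m}F$ can start such a decomposition. Its last step also applies to an arbitrary decomposition $F=\bigoplus_{i=1}^m Z(w_i,\varphi)$ with $w_i\notin\mathfrak{m}F$. That is the form in which the proposition is later used in Theorem~\ref{thm:invSubIsDirSumOfCycSub}, where the decomposition of each $V_i$ is given in advance.

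Your argument produces one good basis. To cover that later use, you would add that $\operatorname{Ann}_S(w_i)=0$, so each $Sw_i\cong S$. This follows by dividing by the monic $\lambda_\varphi$ and using the $R$-linear independence of $w_i,\varphi(w_i),\dots,\varphi^{\deg(\lambda_\varphi)-1}(w_i)$.
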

			\begin{proof}
				Let $w_1 \in F \setminus \mathfrak{m}F.$ By Lemma \ref{lem:cyclicSubModWhenMinIsIrreducible}, $Z(w_1,\varphi)$ is free and of rank $\deg(\lambda_\varphi).$ Hence by Theorem \ref{thm:minSqFreeImplySemiSimple}, there exists a free $\varphi$-invariant submodule $F_1 \le F$ of rank $(m-1)\deg(\lambda_\varphi)$ such that $F=Z(w_1,\varphi) \oplus F_1.$ Furthermore, $\varphi_{\restriction F_1}$ is a regular endomorphism of $F_1$ with minimal polynomial $\lambda_\varphi.$ Hence repeating the same argument yields that there exist $w_2,\dots,w_m \in F \setminus \mathfrak{m}F$ such that $F=\bigoplus_{i=1}^m Z(w_i,\varphi).$ Moreover, from Proposition \ref{prop:associatedModuleIsFreeWhenMinIrr}, $M(\varphi)$ is a free $(R[x]/(\lambda_\varphi))$-module, and it is easy to see that $M(\varphi)=\bigoplus_{i=1}^m(R[x]/(\lambda_\varphi))w_i.$ Hence $\{w_1,\dots,w_m\}$ is a basis of $M(\varphi)$ and $\operatorname{rank}_{R[x]/(\lambda_\varphi)}(M(\varphi))=m.$
			\end{proof}

			\begin{theorem}\label{thm:invSubIsDirSumOfCycSub}
			For all $1\leq i\leq r,$ let $w_{i,1},\dots,w_{i,m_i} \in V_i,$ such that $V_i=\bigoplus_{1\leq j\leq m_i}Z(w_{i,j},\varphi).$ We have for each $\varphi$-invariant submodule $W$ of $F,$ $W$ is a direct sum of submodules $W_1,\dots,W_r$ such that for all $1\leq i\leq r,$ there exist an automorphism $\psi_i$ of $V_i$ commuting with $\varphi_i$  and integers $k_1,\dots,k_{m_i} \in \{0,\dots,\nu\}$ such that 
			\begin{equation}
				W_i=\bigoplus_{1\leq j\leq m_i}\gamma^{k_j}Z(\psi_i(w_{i,j}),\varphi)=\bigoplus_{1\leq j\leq m_i}Z(\psi_i(\gamma^{k_j}w_{i,j}),\varphi).
			\end{equation}
		Conversely, every submodule of this form is $\varphi$-invariant. 
		\end{theorem}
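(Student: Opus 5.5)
By Proposition \ref{prop:invariantDirectSumWhenMinSqFree}, every $\varphi$-invariant $W$ satisfies $W=\bigoplus_{i=1}^r (W\cap V_i)$ with each $W\cap V_i$ invariant. Set $W_i=W\cap V_i$. This reduces the statement to the case where $\lambda_\varphi$ is basic irreducible, with $F=V_i$ and $m=m_i$. Write $S=R[x]/(\lambda_\varphi)$ for this case. By Proposition \ref{prop:decompAsDirectSumOfPhiCylicSubsWhenMinIrr}, $M(\varphi)$ is a free $S$-module with basis $\{w_1,\dots,w_m\}$. By Proposition \ref{prop:quotientByIrrIsSPIR}, $S$ is an SPIR with maximal ideal generated by $\gamma+(\lambda_\varphi)$ and nilpotency index $\nu$.

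The key observation is a dictionary between $F$ and $M(\varphi)$. First, $\varphi$-invariant $R$-submodules of $F$ are exactly the $S$-submodules of $M(\varphi)$, since an $R$-submodule stable under $\varphi$ is stable under every $g(\varphi)$. Second, $S$-module automorphisms of $M(\varphi)$ are exactly the $R$-automorphisms of $F$ commuting with $\varphi$. We then apply Theorem \ref{thm:subModEnumFromAGivenBasis} over the SPIR $S$ to the free $S$-module $M(\varphi)$ with basis $\{w_1,\dots,w_m\}$. Lemma \ref{lem:numMinGenSubModSPIR} and Theorem \ref{thm:subModEnumFromAGivenBasis} are proved for an arbitrary SPIR, so they apply with $S$ in place of $R$. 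They give, for any $S$-submodule $W$ with $q=\nu_S(W)\le m$, an automorphism $\psi\in\operatorname{Aut}_S(M(\varphi))$ and exponents $k_1,\dots,k_q\in\{0,\dots,\nu-1\}$ such that $W=\langle \gamma^{k_1}\psi(w_1),\dots,\gamma^{k_q}\psi(w_q)\rangle_S$. Setting $k_j=\nu$ for $q<j\le m$ accounts for the range $\{0,\dots,\nu\}$ in the statement.

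It remains to translate back. The $S$-span of $\gamma^{k_j}\psi(w_j)$ is $Z(\gamma^{k_j}\psi(w_j),\varphi)=\gamma^{k_j}Z(\psi(w_j),\varphi)$. Since $\psi$ commutes with $\varphi$, $\psi(\gamma^{k_j}w_j)=\gamma^{k_j}\psi(w_j)$, which gives the second equality in the statement. The sum is direct because the $\psi(w_j)$ form an $S$-basis, so the $S$-span of $\{\gamma^{k_j}\psi(w_j)\}$ is the internal direct sum $\bigoplus_j S\gamma^{k_j}\psi(w_j)$. Assembling over $i$ with $\psi_i$ acting on $V_i$ gives the statement.

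For the converse, each $Z(\cdot,\varphi)$ is $\varphi$-invariant. Multiplication by $\gamma^{k}$ commutes with $\varphi$, so $\gamma^kZ(\cdot,\varphi)$ is invariant, and so are sums of invariant submodules.

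The main obstacle is checking the dictionary carefully, in particular that an $R$-linear $\psi$ commuting with $\varphi$ is $S$-linear and conversely, and that directness of the sums is preserved. Both are formal consequences of $x$ acting as $\varphi$. One must also note that Theorem \ref{thm:subModEnumFromAGivenBasis} applies to $S$ since only the SPIR structure was used in its proof.
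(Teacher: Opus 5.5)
Your proposal is correct and follows essentially the same route as the paper. You split $W=\bigoplus_i (W\cap V_i)$ via Proposition~\ref{prop:invariantDirectSumWhenMinSqFree}, regard each $W\cap V_i$ as a submodule of the free $R[x]/(f_i)$-module $M(\varphi_i)$ with basis $\{w_{i,1},\dots,w_{i,m_i}\}$, apply Lemma~\ref{lem:numMinGenSubModSPIR} and Theorem~\ref{thm:subModEnumFromAGivenBasis} over the SPIR $R[x]/(f_i)$, and pad with $k_j=\nu$.

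Like the paper, you cite Proposition~\ref{prop:decompAsDirectSumOfPhiCylicSubsWhenMinIrr} to say the \emph{given} $w_{i,j}$ form an $R[x]/(f_i)$-basis, although that proposition only constructs one such family. It does hold for any such family: each $Z(w_{i,j},\varphi)$ is a free direct summand of $V_i$, so a rank count with Lemma~\ref{lem:cyclicSubModWhenMinIsIrreducible} forces it to be nonzero of rank $\deg f_i$, and hence isomorphic to $R[x]/(f_i)$.
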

		\begin{proof}
			Following Proposition \ref{prop:invariantDirectSumWhenMinSqFree}, let $W=\bigoplus_{1\leq i\leq r} W_i$
			 with $W_i=W \cap V_i$ for all $i.$ Fix $i\in \{1,\dots,r\}$ and denote $R_i$ the SPIR $R[x]/(f_i).$ As $W_i$ is a $\varphi_i$-invariant submodule of $V_i,$ it can be seen as an $R_i$-submodule of $M(\varphi_i).$ On the other hand, by Proposition \ref{prop:decompAsDirectSumOfPhiCylicSubsWhenMinIrr}, $\{w_{i,1},\dots,w_{i,m_i}\}$ is an $R_i$-basis of $M(\varphi_i).$ By Lemma \ref{lem:numMinGenSubModSPIR}, denoting $s_i=\mu_{R_i}(W_i),$ we have $s_i\leq m_i,$ hence by Theorem \ref{thm:subModEnumFromAGivenBasis},  there exist an $R_i$-automorphism $\psi_i$ of $V_i$  and integers $k_1,\dots,k_{s_i} \in \{0,\dots,\nu-1\}$ such that $\{\gamma^{k_1}\psi_i(w_{i,1}),\dots, \gamma^{k_{s_i}}\psi_i(w_{i,s_i})\}$ is a minimal generating set of $W_i$ when seen as an $R_i$-module. Hence as $\{\psi_i(w_{i,1}),\dots,\psi_i(w_{i,s_i})\}$ is $R_i$-linearly independent, the $R_i$-module $W_i$ is a direct sum $ R_i\gamma^{k_1}\psi_i(w_{i,1})\oplus\dots\oplus R_i\gamma^{k_{s_i}}\psi_i(w_{i,s_i}) ,$ and thus, as $R$-modules,  $W_i=\bigoplus_{1\leq j\leq s_i}Z(\psi_i(\gamma^{k_j}w_{i,j}),\varphi)=\bigoplus_{1\leq j\leq m_i}Z(\psi_i(\gamma^{k_j}w_{i,j}),\varphi)$ with $k_{s_i+1}=\dots=k_{m_i}=\nu.$ Moreover, as $\psi_i$ is an $R_i$-automorphism of $V_i,$ it is an $R$-automorphism of $V_i$ that commutes with $\varphi_i.$ The converse is immediate.		 
		\end{proof}

		As an immediate consequence, we obtain the minimal number of generators of each $\varphi$-invariant submodule of $F.$
		
		\begin{corollary}\label{cor:minNumOfGenOfInvariantSubModWhenMinIsSqFree}
		Denote $\mathcal{I}(\varphi)$ the set of $\varphi$-invariant submodules of $F.$ We have: 
		\begin{equation}\label{eq:numMinGenPhiInvSubMod}
			\left\{\nu_R(W) ~|~ W\in \mathcal{I}(\varphi)\right\} = \left\{\sum_{i=1}^{r}s_i\deg(f_i) ~|~ 0\leq s_i\leq m_i, \text{ for all } i \right\}.
		\end{equation}
		\end{corollary}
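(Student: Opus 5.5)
We prove the two inclusions of \eqref{eq:numMinGenPhiInvSubMod} separately. Both rest on Theorem \ref{thm:invSubIsDirSumOfCycSub}, which writes each $\varphi$-invariant $W$ as $\bigoplus_i W_i$, and on Lemma \ref{lem:cyclicSubModWhenMinIsIrreducible}, which gives the minimal number of generators of cyclic pieces. Throughout we use the additivity $\nu_R(M\oplus N)=\nu_R(M)+\nu_R(N)$ that follows from Theorem \ref{thm:matsumura2.3}.

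\emph{Inclusion $\subseteq$.} Let $W\in\mathcal{I}(\varphi)$. By Theorem \ref{thm:invSubIsDirSumOfCycSub}, $W=\bigoplus_{i=1}^r W_i$ with
\[
W_i=\bigoplus_{1\leq j\leq m_i}Z(\psi_i(\gamma^{k_j}w_{i,j}),\varphi), \qquad k_j\in\{0,\dots,\nu\}.
\]
Since $\psi_i$ is $R$-linear, $\psi_i(\gamma^{k_j}w_{i,j})=\gamma^{k_j}\psi_i(w_{i,j})$.
\begin{itemize}
\item If $k_j=\nu$, this element is $0$, so the summand is $0$.
\item If $k_j<\nu$, the element is nonzero. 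Indeed, $\psi_i(w_{i,j})\notin\mathfrak{m}V_i$, because $\psi_i$ is an automorphism and $w_{i,j}\notin\mathfrak{m}V_i$ by Proposition \ref{prop:decompAsDirectSumOfPhiCylicSubsWhenMinIrr}. Lemma \ref{lem:uniqueExpressionNorton2000} then gives $\gamma^{k_j}\psi_i(w_{i,j})\neq0$. Applying Lemma \ref{lem:cyclicSubModWhenMinIsIrreducible} to $\varphi_i$, whose minimal polynomial is $f_i$ by Proposition \ref{prop:kernelsProp}(i), the summand has $\nu_R=\deg f_i$.
\end{itemize}
Let $s_i$ be the number of indices $j$ with $k_j<\nu$, so $0\leq s_i\leq m_i$. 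Additivity gives $\nu_R(W_i)=s_i\deg f_i$, and hence $\nu_R(W)=\sum_i s_i\deg f_i$.

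\emph{Inclusion $\supseteq$.} Given integers $0\leq s_i\leq m_i$, set $W=\bigoplus_i\bigoplus_{j\leq s_i}Z(w_{i,j},\varphi)$. This is $\varphi$-invariant; it is the case $\psi_i=\mathrm{id}$, $k_j=0$ for $j\leq s_i$ and $k_j=\nu$ otherwise, which is the converse part of Theorem \ref{thm:invSubIsDirSumOfCycSub}. By the count above, $\nu_R(W)=\sum_i s_i\deg f_i$.

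\emph{Justifying additivity.} The main point needing care is that additivity is applied to an internal direct sum inside $F$. This holds because $\nu_R$ is an isomorphism invariant and $M\oplus N$ is isomorphic to the external direct sum. The existence of the decompositions $V_i=\bigoplus_j Z(w_{i,j},\varphi)$ is supplied by Proposition \ref{prop:decompAsDirectSumOfPhiCylicSubsWhenMinIrr} applied to each $\varphi_i$.
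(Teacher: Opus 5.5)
Your proof is correct and follows the paper's route. The paper presents this corollary as an immediate consequence of Theorem \ref{thm:invSubIsDirSumOfCycSub}, and you make explicit the ingredients it leaves implicit: each nonzero cyclic summand has $\nu_R=\deg f_i$ by Lemma \ref{lem:cyclicSubModWhenMinIsIrreducible}, $\nu_R$ is additive over direct sums, and for the reverse inclusion the submodules $\bigoplus_i\bigoplus_{j\le s_i}Z(w_{i,j},\varphi)$ realize every value.
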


		When the ring $R$ is finite, in addition to the characterization of Theorem \ref{thm:invSubIsDirSumOfCycSub},  we can give the number of the free $\varphi$-invariant submodules of $F.$ Recall that the Gaussian $q$-binomial coefficient for $0\leq m\leq n$ is defined as $\displaystyle \binom{n}{m}_{q}=\prod_{i=0}^{m-1} \frac{q^n - q^i}{q^m - q^i}.$
			\begin{proposition}
			Suppose that $R$ is a finite chain ring with residue field $\mathbb{F}_q,$ and for all $1\leq i\leq r,$ let $d_i$ denote $\deg(f_i).$ Then denoting $\mathcal{F}(\varphi)$ the set of free $\varphi$-invariant submodules of $F,$ we have:   
			\begin{equation}
				\lvert \mathcal{F}(\varphi)\rvert = \prod_{i=1}^{r}\left(\sum_{j=0}^{m_i}q^{d_i(\nu-1)j(m_i-j)}\binom{m_i}{j}_{q^{d_i}}\right).
			\end{equation}
		\end{proposition}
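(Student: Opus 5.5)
By Proposition \ref{prop:invariantDirectSumWhenMinSqFree}, a free $\varphi$-invariant submodule $W$ is determined by the family $(W\cap V_i)_{1\leq i\leq r}$. Each $W\cap V_i$ is a free $\varphi_i$-invariant submodule of $V_i$. Conversely, any family of free $\varphi_i$-invariant submodules $W_i\le V_i$ gives the free invariant submodule $\bigoplus_i W_i$, and $W_i$ is recovered as $W\cap V_i$. Hence
\[
\lvert \mathcal{F}(\varphi)\rvert=\prod_{i=1}^r\lvert\mathcal{F}(\varphi_i)\rvert,
\]
and it suffices to treat the case $r=1$: $\lambda_\varphi=f$ basic irreducible of degree $d$, with $\rchi_{\mu(\varphi)}=\mu(f)^m$.

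Put $S=R[x]/(f)$. By Proposition \ref{prop:quotientByIrrIsSPIR}, $S$ is an SPIR with the same nilpotency index $\nu$, and its residue field $k[x]/(\mu(f))$ is $\mathbb{F}_{q^d}$. Since $R$ is finite, $S$ is a finite chain ring. By Proposition \ref{prop:decompAsDirectSumOfPhiCylicSubsWhenMinIrr}, $M(\varphi)\simeq S^m$. The $\varphi$-invariant $R$-submodules of $F$ are exactly the $S$-submodules of $S^m$. I then claim that a $\varphi$-invariant $W$ is $R$-free if and only if it is $S$-free.

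If $W$ is $R$-free, then $\varphi_{\restriction W}$ is regular and annihilated by $f$ (Remark \ref{rk:localRegularityConverse}), and Proposition \ref{prop:associatedModuleIsFreeWhenMinIrr} shows that $W$ is $S$-free. Conversely, $S$ is $R$-free with basis $1,x,\dots,x^{d-1}$, since $f$ is monic; so an $S$-free module is $R$-free. Thus $\lvert\mathcal{F}(\varphi)\rvert$ equals the number of free $S$-submodules of $S^m$, which is $\sum_{j=0}^m N_j$, where $N_j$ is the number of free rank-$j$ $S$-submodules of $S^m$.

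For $N_j$, write $Q=q^d$ for the residue field size of $S$. By Proposition \ref{prop:steinitz}(iii) and Theorem \ref{thm:subModEnumFromAGivenBasis}, the free rank-$j$ submodules are the spans $\langle\psi(e_1),\dots,\psi(e_j)\rangle$ with $\psi\in GL_m(S)$. So $N_j$ is the index in $GL_m(S)$ of the stabilizer of $S^j\oplus 0$, which is the block upper triangular group
\[
P_j=\begin{pmatrix}GL_j(S)&M_{j\times(m-j)}(S)\\0&GL_{m-j}(S)\end{pmatrix}.
\]
Reduction modulo the maximal ideal is surjective, so $\lvert GL_n(S)\rvert=\lvert GL_n(\mathbb{F}_Q)\rvert\cdot Q^{(\nu-1)n^2}$, because the kernel consists of the matrices $I+\gamma M$ with $M$ ranging over $M_n(S)$ modulo $\gamma^{\nu-1}$. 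In the same way $\lvert P_j\rvert=\lvert \overline{P_j}\rvert\cdot Q^{(\nu-1)(j^2+(m-j)^2+j(m-j))}$, where $\overline{P_j}$ is the parabolic subgroup over $\mathbb{F}_Q$. Taking the quotient of the two counts gives
\[
N_j=\binom{m}{j}_{Q}\,Q^{(\nu-1)\left(m^2-j^2-(m-j)^2-j(m-j)\right)}=\binom{m}{j}_{Q}\,Q^{(\nu-1)j(m-j)}.
\]
Since $Q^{(\nu-1)j(m-j)}=q^{d(\nu-1)j(m-j)}$, this is exactly the $j$-th term of the factor for index $i$ in the stated formula.

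An alternative route to $N_j$ is to count ordered $S$-bases of free rank-$j$ submodules. These are $j$-tuples whose reduction is linearly independent (Corollary \ref{cor:independenceTransfer}), so there are $Q^{(\nu-1)mj}\prod_{t<j}(Q^m-Q^t)$ of them. Dividing by $\lvert GL_j(S)\rvert$ gives the same result.

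The main obstacle is the identification of $R$-freeness with $S$-freeness, together with the fact that $\lvert \mathcal{F}(\varphi)\rvert$ depends only on $m$, $d$ and $\nu$. Both follow from the results cited above. Everything else is a standard orbit-stabilizer count over a finite chain ring.
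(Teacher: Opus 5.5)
Your proof is correct and follows the paper's overall plan. Both arguments write $\lvert\mathcal{F}(\varphi)\rvert$ as a product over the components $V_i$. Both then identify free $\varphi_i$-invariant submodules of $V_i$ with free $S_i$-submodules of $M(\varphi_i)\simeq S_i^{m_i}$, where $S_i=R[x]/(f_i)$. The two routes differ in how they handle the two remaining steps.

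\textbf{Freeness equivalence.} The paper uses semisimplicity (Theorem \ref{thm:minSqFreeImplySemiSimple}).
\begin{itemize}
\item A free invariant $W$ has an invariant complement. It is therefore an $S_i$-direct summand of the free module $M(\varphi_i)$, so it is $S_i$-free.
\item Conversely, a free $S_i$-submodule has a complement by Proposition \ref{prop:steinitz}. It is then an invariant $R$-direct summand of $V_i$, hence $R$-free.
\end{itemize}
You instead apply Proposition \ref{prop:associatedModuleIsFreeWhenMinIrr} directly to $\varphi_{\restriction W}$, which the paper itself does inside the proof of Theorem \ref{thm:minSqFreeImplySemiSimple}. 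For the other direction you use that $S\simeq R^d$ as an $R$-module, which is cleaner.

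You should make one point explicit. For $W\neq 0$, the minimal polynomial of $\varphi_{\restriction W}$ is a monic divisor of $f$ whose image under $\mu$ divides the irreducible $\mu(f)$, so it equals $f$. This is what lets Proposition \ref{prop:associatedModuleIsFreeWhenMinIrr} give freeness over the same ring $S$. The case $W=0$ is trivially free over both rings.

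\textbf{The count.} The paper simply cites \cite[Proposition 3]{byrne2022density} for the number of free submodules of $S^{m}$. You derive $N_j=Q^{(\nu-1)j(m-j)}\binom{m}{j}_Q$ yourself, by orbit--stabilizer in $GL_m(S)$ with the block-triangular stabilizer, or alternatively by counting ordered bases through residual linear independence. Both computations are correct, including the kernel sizes $Q^{(\nu-1)n^2}$ and $Q^{(\nu-1)(j^2+(m-j)^2+j(m-j))}$. The paper's version is shorter but leaves the combinatorial core to an outside reference. Yours is self-contained and shows that the factor $q^{d(\nu-1)j(m-j)}$ is exactly the extra freedom from lifting residue-field data.
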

		\begin{proof}For all $1\leq i\leq r,$ let $S_i$  denote the ring $R[x]/(f_i)$ and $\mathcal{F}_i$ the set of free $\varphi_i$-invariant submodules of $V_i.$	
		From propositions \ref{prop:projectiveFreeOverLocal} and  \ref{prop:invariantDirectSumWhenMinSqFree}, we see that $\mathcal{F}(\varphi)$  has a one-to-one correspondence with $\mathcal{F}_1\times \mathcal{F}_2\times\dots\times \mathcal{F}_r,$ hence $\lvert \mathcal{F}(\varphi) \rvert= \prod_{i=1}^{r} \lvert \mathcal{F}_i \rvert.$ 
		
		Moreover for all $i,$ a submodule of $V_i$ is free and $\varphi_i$-invariant if and only if it is a free $S_i$-submodule of $M(\varphi_i).$ Indeed, by Theorem \ref{thm:minSqFreeImplySemiSimple}, $\varphi_i$ is semisimple, so for each free $\varphi_i$-invariant submodule $W$ of $V_i,$ there exists a $\varphi_i$-invariant submodule $W'$ such that $V_i=W\oplus W',$ hence as $S_i$-modules, we have $M(\varphi_i)=W\oplus W',$ but by Proposition \ref{prop:associatedModuleIsFreeWhenMinIrr}, $M(\varphi_i)$ is free, hence $W$ is a free $S_i$-module. Conversely, as $S_i$ is an SPIR, by Proposition \ref{prop:steinitz} a free submodule of $M(\varphi_i)$ has a complement, hence it is a $\varphi_i$-invariant direct summand of $V_i$ which implies that it is a free $R$-module. We have then, the cardinality of $\mathcal{F}_i$ equals the number of the free $S_i$-submodules of $M(\varphi_i).$  On the other hand, the residue field of $S_i$ is isomorphic to $k[x]/(\mu(f_i))\simeq\mathbb{F}_{q^{d_i}},$ hence as $\operatorname{rank}_{S_i}(M(\varphi_i))=m_i$ by Proposition \ref{prop:decompAsDirectSumOfPhiCylicSubsWhenMinIrr}, we have by \cite[Proposition 3]{byrne2022density}, $\lvert \mathcal{F}_i \rvert= \sum_{j=0}^{m_i}q^{d_i(\nu-1)j(m_i-j)}\binom{m_i}{j}_{q^{d_i}},$ and the cardinality of $\mathcal{F}(\varphi)$ follows.
		\end{proof}

		Another consequence of the decomposition of $F$ as a direct sum of $\varphi$-cyclic submodules provided in Proposition \ref{prop:decompAsDirectSumOfPhiCylicSubsWhenMinIrr} is that we can sharpen Proposition \ref{prop:squareFreeAnnihilatorImpliesNormal} as follows.

		\begin{theorem}\label{thm:minIrrImpliesPhiSimToMatDiagCompanionBlocks}
			 A basis of $F$ may be chosen relative to which $\varphi$ is represented by the matrix $\operatorname{diag}(C_1,\dots,C_r)$ such that for all $1\leq i\leq r,$ 	
			
			 \begin{equation}
			 	C_i=
			 \left.
			 \begin{bmatrix}
			 	\operatorname{com}(f_i) & &  \text{\large 0} \\ & \ddots & \\ \text{\large 0} & & \operatorname{com}(f_i) 
			 \end{bmatrix}
			 \right\} m_i \text{ times}.
			 	\end{equation}

		\end{theorem}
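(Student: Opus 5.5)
The plan is to reduce the statement to the primary components $V_i$ and then to apply Proposition \ref{prop:decompAsDirectSumOfPhiCylicSubsWhenMinIrr} inside each of them.

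\textbf{Step 1: reduction to the $V_i$.} By Proposition \ref{prop:kernelsProp}(i), applied with $f=\lambda_\varphi$ (which is legitimate since $N_\varphi=(\lambda_\varphi)$ by Theorem \ref{thm:sqFreeImplyRegular}), we have $F=\bigoplus_{i=1}^r V_i$. Each $V_i$ is free and $\varphi$-invariant, and $\varphi_i$ is regular with $N_{\varphi_i}=(f_i)$, so that $\lambda_{\varphi_i}=f_i$ is basic irreducible. Concatenating bases of the $V_i$ therefore gives a basis of $F$ in which $\varphi$ is block diagonal, with the $i$-th block representing $\varphi_i$. It remains to find, for each $i$, a basis of $V_i$ in which $\varphi_i$ is represented by $C_i$.

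\textbf{Step 2: the multiplicity.} I need that $\rchi_{\mu(\varphi_i)}=\mu(f_i)^{m_i}$. By Proposition \ref{prop:kernelsProp}(ii), $\mu(V_i)=\overline{V_i}=\ker(\mu(f_i)(\mu(\varphi)))$, and $\operatorname{rank}_R(V_i)=m_i\deg(f_i)$. The reduction $\mu(\varphi_i)$ is the restriction of $\mu(\varphi)$ to $\overline{V_i}$, and it is annihilated by the irreducible polynomial $\mu(f_i)$. Hence its characteristic polynomial is a power of $\mu(f_i)$, and comparing degrees gives exactly $\mu(f_i)^{m_i}$.

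\textbf{Step 3: cyclic decomposition.} Proposition \ref{prop:decompAsDirectSumOfPhiCylicSubsWhenMinIrr}, applied to $\varphi_i$ on $V_i$, yields $w_{i,1},\dots,w_{i,m_i}\in V_i\setminus\mathfrak{m}V_i$ with $V_i=\bigoplus_{j=1}^{m_i} Z(w_{i,j},\varphi)$. By Lemma \ref{lem:cyclicSubModWhenMinIsIrreducible}, each $Z(w_{i,j},\varphi)$ is free with basis $\{w_{i,j},\varphi(w_{i,j}),\dots,\varphi^{d_i-1}(w_{i,j})\}$, where $d_i=\deg f_i$. Since $f_i(\varphi)(w_{i,j})=0$ and $f_i$ is monic, the image $\varphi(\varphi^{d_i-1}(w_{i,j}))=\varphi^{d_i}(w_{i,j})$ is $-\sum_{t<d_i}a_t\varphi^t(w_{i,j})$, where $f_i=x^{d_i}+\sum_{t<d_i} a_t x^t$. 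Thus the matrix of $\varphi$ restricted to $Z(w_{i,j},\varphi)$ in this basis is $\operatorname{com}(f_i)$, matching the paper's sign convention for companion matrices.

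\textbf{Step 4: assembly.} Ordering the union of all these bases, first by $i$ and then by $j$, gives the basis required by the theorem.

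The only delicate point is Step 2. Everything else is direct citation, but one must check that $\mu(\varphi_i)$ has no eigen-factor other than $\mu(f_i)$. This follows cleanly from $\mu(f_i)(\mu(\varphi_i))=0$ together with the irreducibility of $\mu(f_i)$, since the minimal polynomial of $\mu(\varphi_i)$ then equals $\mu(f_i)$ and the characteristic polynomial can only involve that irreducible factor.
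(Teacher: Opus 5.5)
Your proof is correct and follows the paper's route. You split $F=\bigoplus_i V_i$ via Proposition~\ref{prop:kernelsProp}, decompose each $V_i$ into $m_i$ free $\varphi$-cyclic summands via Proposition~\ref{prop:decompAsDirectSumOfPhiCylicSubsWhenMinIrr}, and read off $\operatorname{com}(f_i)$ on each summand. Where the paper cites Proposition~\ref{prop:charaSqFreeImpliesRegular} for that last step, you compute the block directly from the basis of Lemma~\ref{lem:cyclicSubModWhenMinIsIrreducible}. Your Step~2 also justifies $\rchi_{\mu(\varphi_i)}=\mu(f_i)^{m_i}$, which the paper only asserts.

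One small remark on the companion matrix. Your block, with last column $(-a_0,\dots,-a_{d_i-1})$ for $f_i=x^{d_i}+\sum_t a_tx^t$, is the companion matrix whose characteristic polynomial is $f_i$, which is what the paper means. However, the paper's displayed definition of $\operatorname{com}$ has a sign slip: it writes $f=x^n-\sum a_ix^i$ but puts $-a_i$ in the last column. So your block ``matches the paper's convention'' only in this intended sense, not the literal one.
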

		
		\begin{proof}
			By Proposition \ref{prop:kernelsProp}, there exists a basis of $F$ in which $\varphi$ is represented by a matrix $\operatorname{diag}(C'_1,\dots,C'_r)$ such that for all $1\leq i\leq r,$ $C'_i$ is the matrix of $\varphi_i$ in some basis of $V_i$. Fix $1\leq i\leq r.$ As $\rchi_{\mu(\varphi_i)}=\mu(f_i)^{m_i},$ by Proposition \ref{prop:decompAsDirectSumOfPhiCylicSubsWhenMinIrr}, let $w_1,\dots,w_{m_i}\in V_i$ such that $V_i=\bigoplus_{j=1}^{m_i}Z(w_j,\varphi).$ Then it follows immediately from Lemma \ref{lem:cyclicSubModWhenMinIsIrreducible} and Proposition \ref{prop:charaSqFreeImpliesRegular} that $\varphi_i$ can be represented by the matrix $\operatorname{diag}(\underbrace{\operatorname{com}(f_i), \dots, \operatorname{com}(f_i)}_{m_i \text{ times}})$ in some basis of $V_i.$		
 		\end{proof}

 		\begin{corollary}
 			Suppose that the residue field $k$ of $R$ is algebraically closed. Then every matrix $A\in M_n(R)$ with a residually square-free annihilating polynomial is diagonalizable.
 		\end{corollary}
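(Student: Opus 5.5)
The plan is to apply Theorem \ref{thm:minIrrImpliesPhiSimToMatDiagCompanionBlocks} to $\varphi=\varphi_A\in\operatorname{End}_R(R^n)$ and then show that, when $k$ is algebraically closed, every basic irreducible factor $f_i$ of $\lambda_\varphi$ has degree one, so each block $\operatorname{com}(f_i)$ is a $1\times 1$ matrix.

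First, I would note that $A$ has a residually square-free annihilating polynomial. By Theorem \ref{thm:sqFreeImplyRegular} and Corollary \ref{cor:sqFreeAnnPolImpliesSqFreeMinPoly}, $\varphi_A$ is then regular, and $\lambda_{\varphi_A}$ is monic and residually square-free. As in the Notation, write $\lambda_{\varphi_A}=\prod_{i=1}^r f_i$ as a product of monic, pairwise coprime, basic irreducible polynomials.

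Next, each $\mu(f_i)$ is a monic irreducible polynomial in $k[x]$. Since $k$ is algebraically closed, this forces $\deg\mu(f_i)=1$. Because $f_i$ is monic, $\deg f_i=\deg\mu(f_i)=1$, so $f_i=x-a_i$ for some $a_i\in R$. Then $\operatorname{com}(f_i)=[a_i]$ is a $1\times1$ matrix.

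Finally, Theorem \ref{thm:minIrrImpliesPhiSimToMatDiagCompanionBlocks} gives a basis of $R^n$ in which $\varphi_A$ is represented by $\operatorname{diag}(C_1,\dots,C_r)$, where each $C_i=a_iI_{m_i}$. Let $P\in GL_n(R)$ be the change-of-basis matrix from the canonical basis to this basis. Then $P^{-1}AP=\operatorname{diag}(a_1I_{m_1},\dots,a_rI_{m_r})$, so $A$ is diagonalizable.

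There is no real obstacle here. The only points that need care are that a monic polynomial keeps its degree under $\mu$, and that similarity of matrices corresponds to a change of basis of $R^n$; both were noted in the Preliminaries.
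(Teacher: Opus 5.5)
Your proposal is correct and is the paper's own argument: apply Theorem~\ref{thm:minIrrImpliesPhiSimToMatDiagCompanionBlocks} to $\varphi_A$, and note that when $k$ is algebraically closed every monic basic irreducible $f_i$ is linear, so each $\operatorname{com}(f_i)$ is a $1\times 1$ block. Your extra steps only spell out what the paper's one-line proof leaves implicit: $\deg f_i=\deg\mu(f_i)$ for monic $f_i$, and the block-diagonal form translates into $P^{-1}AP$ being diagonal.
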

 		\begin{proof}
 			Follows immediately from Theorem \ref{thm:minIrrImpliesPhiSimToMatDiagCompanionBlocks} and the fact that when $k$ is algebraically closed, every monic basic irreducible polynomial in $R[x]$ has degree $1.$
 		\end{proof}

 		\begin{corollary}\label{cor:charaPolOfSqFreeMin}
 			The characteristic polynomial of $\varphi$ is  $\rchi_\varphi=\prod_{i=1}^{r}f_i^{m_i}.$ 
 		\end{corollary}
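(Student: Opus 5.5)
The plan is to read the characteristic polynomial off the block-diagonal canonical matrix of Theorem \ref{thm:minIrrImpliesPhiSimToMatDiagCompanionBlocks}, and then use multiplicativity of determinants over block-diagonal matrices.

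First, by Theorem \ref{thm:minIrrImpliesPhiSimToMatDiagCompanionBlocks}, choose a basis of $F$ in which $\varphi$ is represented by $A=\operatorname{diag}(C_1,\dots,C_r)$. Here each $C_i$ is itself block diagonal, with $m_i$ copies of $\operatorname{com}(f_i)$ on the diagonal. Since the characteristic polynomial is invariant under similarity (Section \ref{sec:Prelim}), we have $\rchi_\varphi=\det(xI_n-A)$.

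Second, $xI_n-A$ is block diagonal, with diagonal blocks $xI-\operatorname{com}(f_i)$, each appearing $m_i$ times. The determinant of a block-diagonal matrix over a commutative ring is the product of the determinants of its blocks. Equivalently, one can iterate Proposition \ref{prop:charaDividesChara}, using the invariant decomposition $F=\bigoplus_i\bigoplus_j Z(w_{i,j},\varphi)$. Either way we get
\[
\rchi_\varphi=\prod_{i=1}^r \rchi_{\operatorname{com}(f_i)}^{m_i}.
\]

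Third, the identity $\rchi_{\operatorname{com}(f)}=f$ recalled in Section \ref{sec:Prelim} gives $\rchi_\varphi=\prod_{i=1}^r f_i^{m_i}$. Alternatively, Lemma \ref{lem:cyclicSubModWhenMinIsIrreducible} already states that the restriction of $\varphi$ to each free cyclic summand $Z(w_{i,j},\varphi)$ has characteristic polynomial $f_i$.

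There is no serious obstacle. The only point to check is that the number of companion blocks for $f_i$ is exactly the $m_i$ defined by $\rchi_{\mu(\varphi)}=\prod_i\mu(f_i)^{m_i}$. This is built into Theorem \ref{thm:minIrrImpliesPhiSimToMatDiagCompanionBlocks}, via Proposition \ref{prop:decompAsDirectSumOfPhiCylicSubsWhenMinIrr} applied to $\varphi_i$ with $\rchi_{\mu(\varphi_i)}=\mu(f_i)^{m_i}$. As a consistency check, Proposition \ref{prop:kernelsProp}(ii) gives $\operatorname{rank}_R(V_i)=m_i\deg(f_i)$, which matches the size of $C_i$.
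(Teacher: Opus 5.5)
Your proposal is correct and follows the paper's own argument: take the block-diagonal form from Theorem \ref{thm:minIrrImpliesPhiSimToMatDiagCompanionBlocks}, multiply characteristic polynomials over the invariant direct summands (the paper uses Proposition \ref{prop:charaDividesChara} for this, which you also mention), and finish with $\rchi_{\operatorname{com}(f_i)}=f_i$. Your check that the block count for $f_i$ equals $m_i$, which Proposition \ref{prop:decompAsDirectSumOfPhiCylicSubsWhenMinIrr} builds into that theorem, is the right thing to verify.
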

 		\begin{proof}
 			Follows from Theorem \ref{thm:minIrrImpliesPhiSimToMatDiagCompanionBlocks}, Proposition \ref{prop:charaDividesChara} and the fact that $\rchi_{\operatorname{com}(f_i)}=f_i$ for all $i.$
 		\end{proof}
 		
 		\begin{remark}
 			An endomorphism with no residually square-free annihilating polynomial may have a characteristic polynomial that cannot be expressed as a product of basic irreducible polynomials. Indeed, by use of the companion matrix, each monic polynomial of degree equal to the rank of $F$ is the characteristic polynomial for some endomorphism of $F.$ And in the ring $R=\mathbb{Z}/4\mathbb{Z}$ for example, $x^2+2$ cannot be written as a product of basic irreducible polynomials. 		\end{remark}

 		We end this section by considering the question of similarity in the context of matrices with residually square-free annihilating polynomials. The next theorem generalizes the results of \cite{davis1968certain} from $\mathbb{Z}/p^n\mathbb{Z}$ to SPIRs.
 		
 		\begin{theorem}\label{thm:similarityWhenMinPolSqFree}
 			Let $A,B \in M_n(R)$ be two matrices with residually square-free annihilating polynomials. The following are equivalent:
 			\begin{itemize}
 				\item[(i)] $A$ and $B$ are similar,
 				\item[(ii)] $\lambda_A=\lambda_B$  and $\mu(A)$ and $\mu(B)$ are similar,
 				\item[(iii)] $\lambda_A=\lambda_B$  and $\rchi_A=\rchi_B,$ 	
 				\item[(iv)] $\lambda_A=\lambda_B$  and $\mu(\rchi_A)=\mu(\rchi_B).$ 		
 			\end{itemize}
 		\end{theorem}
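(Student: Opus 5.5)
We will prove the cycle (i)$\Rightarrow$(ii)$\Rightarrow$(iv)$\Rightarrow$(iii)$\Rightarrow$(i), using the canonical form of Theorem \ref{thm:minIrrImpliesPhiSimToMatDiagCompanionBlocks} as the central tool.

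\emph{(i)$\Rightarrow$(ii).} If $B=PAP^{-1}$ with $P\in GL_n(R)$, then $g(B)=Pg(A)P^{-1}$ for every $g\in R[x]$. Hence $N_A=N_B$. By Theorem \ref{thm:sqFreeImplyRegular} both $A$ and $B$ are regular, so $N_A=(\lambda_A)$ and $N_B=(\lambda_B)$. Since $\lambda_A$ and $\lambda_B$ are the unique monic polynomials of least degree in the same ideal, $\lambda_A=\lambda_B$. Applying $\mu$ to $B=PAP^{-1}$ gives $\mu(B)=\mu(P)\mu(A)\mu(P)^{-1}$, so $\mu(A)\sim\mu(B)$.

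\emph{(ii)$\Rightarrow$(iv).} Similar matrices over $k$ have the same characteristic polynomial, and $\mu(\rchi_A)=\rchi_{\mu(A)}$ (noted in Section \ref{sec:endWithSqFreeAnnihilator}). Thus $\mu(\rchi_A)=\rchi_{\mu(A)}=\rchi_{\mu(B)}=\mu(\rchi_B)$.

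\emph{(iv)$\Rightarrow$(iii).} Write $\lambda_A=\lambda_B=\prod_{i=1}^r f_i$, the unique factorization into monic, pairwise coprime, basic irreducible polynomials given by Proposition \ref{prop:sqFreeUniqIrrFacto}. By Corollary \ref{cor:charaPolOfSqFreeMin}, $\rchi_A=\prod_i f_i^{m_i}$ and $\rchi_B=\prod_i f_i^{m'_i}$, where $m_i$ and $m'_i$ are the multiplicities of $\mu(f_i)$ in $\rchi_{\mu(A)}=\mu(\rchi_A)$ and in $\mu(\rchi_B)$ respectively. The $\mu(f_i)$ are distinct irreducible polynomials of $k[x]$, because they are pairwise coprime. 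Unique factorization in $k[x]$ therefore gives $m_i=m'_i$ for all $i$, and so $\rchi_A=\rchi_B$.

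\emph{(iii)$\Rightarrow$(i).} This is the only substantive step. With the same notation, Theorem \ref{thm:minIrrImpliesPhiSimToMatDiagCompanionBlocks} applied to $\varphi_A$ and $\varphi_B$ shows that each of $A$ and $B$ is similar to $\operatorname{diag}(C_1,\dots,C_r)$, where $C_i$ consists of $m_i$ copies of $\operatorname{com}(f_i)$ for $A$ and of $m'_i$ copies for $B$. The form depends only on the $f_i$ (which come from $\lambda_A=\lambda_B$) and on the multiplicities. The obstacle is to recover the multiplicities from $\rchi$. The $m_i$ are defined through $\rchi_{\mu(A)}$, so we must be sure they are determined by $\rchi_A$ even though $R[x]$ is not a UFD. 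We avoid factoring in $R[x]$ altogether: since $\rchi_A=\rchi_B$, we have $\mu(\rchi_A)=\mu(\rchi_B)$, and the multiplicities are read off in $k[x]$ exactly as in the previous step, giving $m_i=m'_i$. Consequently $A$ and $B$ are similar to the same block-diagonal matrix, hence to each other. Choosing the ordering of the $f_i$ once, from the common $\lambda$, ensures the two canonical forms coincide block by block.
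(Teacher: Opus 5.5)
Your proof is correct and uses the same two ingredients as the paper: Corollary \ref{cor:charaPolOfSqFreeMin} and the canonical form of Theorem \ref{thm:minIrrImpliesPhiSimToMatDiagCompanionBlocks}, with the multiplicities $m_i$ read off from $\mu(\rchi)$ in $k[x]$. The only difference is the order of the cycle: the paper runs (i)$\Rightarrow$(ii)$\Rightarrow$(iii)$\Rightarrow$(iv)$\Rightarrow$(i), and your (iii)$\Rightarrow$(i) step actually uses only $\mu(\rchi_A)=\mu(\rchi_B)$, so it is exactly the paper's (iv)$\Rightarrow$(i).
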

 		\begin{proof}
 			\textit{(i)} clearly implies \textit{(ii)} and 	\textit{(iii)} clearly implies \textit{(iv)}.
 			
			$(ii) \implies (iii)$ As $\mu(A)\sim \mu(B),$ $\rchi_{\mu(A)}=\rchi_{\mu(B)}.$ Hence from Corollary \ref{cor:charaPolOfSqFreeMin}, as $\lambda_A=\lambda_B,$ we have that $\rchi_A=\rchi_B.$
			
			$(iv) \implies (i)$ Follows immediately from Theorem \ref{thm:minIrrImpliesPhiSimToMatDiagCompanionBlocks}.
 		\end{proof}

 		\begin{remark}
 			While over a field, two linear operators with square-free minimal polynomials are similar if and only if their characteristic polynomials are equal, this does not hold for $R$ in general. This stems from the fact that $R[x]$ is not always a UFD and the characteristic polynomial can have multiple factorizations into basic irreducible polynomials. Consider for example $R=\mathbb{Z}/4\mathbb{Z},$ and let $A=\begin{pmatrix}
 				3 & 0\\
 				0 & 3
 			\end{pmatrix}.$ We have that  both $A$ and $I_2$ have residually square-free annihilating polynomials and  $\rchi_A=(x-3)^2=(x-1)^2=\rchi_{I_2},$ yet $\lambda_A=x-3\neq \lambda_{I_2}=x-1,$ which means that $A$ and $I_2$ cannot be similar.
 		\end{remark}

		\section{Invariance in the non-derogatory case}\label{sec:nonDerogInvariance}

		 A regular endomorphism such that the minimal and characteristic polynomials coincide is said to be \textit{non-derogatory}. We devote this section to investigating this special case.
	
		\begin{lemma}\label{lem:cyclicSubModNonDerogMinIrred}
			Suppose that $\varphi$ is non-derogatory and that $\lambda_\varphi$ is basic irreducible. We have
			\begin{equation}
				\{Z(w,\varphi) ~|~ w\in F\}=\{\mathfrak{m}^i F ~|~ 0\leq i\leq \nu\}.
			\end{equation} 
			In particular, for $w\in F,$ if $i$ is an integer such that $w \in \mathfrak{m}^i F \setminus \mathfrak{m}^{i+1}F$ then $Z(w,\varphi)=\mathfrak{m}^i F.$
		\end{lemma}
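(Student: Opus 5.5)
The plan is to reduce everything to the case $w\notin\mathfrak{m}F$, where $Z(w,\varphi)$ should be all of $F$. Non-derogatory means $\rchi_\varphi=\lambda_\varphi$, so $n=\operatorname{rank}_R(F)=\deg(\lambda_\varphi)$. Take $w\in F\setminus\mathfrak{m}F$. By Lemma \ref{lem:cyclicSubModWhenMinIsIrreducible}, $Z(w,\varphi)$ is free with basis $\{\varphi^i(w)~|~0\leq i\leq \deg(\lambda_\varphi)-1\}$, hence of rank $\deg(\lambda_\varphi)=n$. A linearly independent subset of $F$ of cardinality $\operatorname{rank}_R(F)$ is a basis by Proposition \ref{prop:steinitz}(ii), so $Z(w,\varphi)=F$.

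Next I treat a general nonzero $w$. By Lemma \ref{lem:uniqueExpressionSPIR}, write $w=\gamma^s w'$ with $w'\in F\setminus\mathfrak{m}F$ and $0\leq s\leq\nu-1$. Then $Z(w,\varphi)=\gamma^sZ(w',\varphi)=\gamma^sF=\mathfrak{m}^sF$, using $R$-linearity of $g(\varphi)$ and $\mathfrak{m}^s=(\gamma^s)$. For $w=0$ we get $Z(0,\varphi)=0=\mathfrak{m}^\nu F$. This proves the inclusion $\subseteq$. For the reverse inclusion, for each $0\leq i\leq\nu$ take $w=\gamma^iu$ with $u$ any basis element (or $w=0$ when $i=\nu$); the computation above gives $Z(w,\varphi)=\mathfrak{m}^iF$.

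For the ``in particular'' clause, suppose $w\in\mathfrak{m}^iF\setminus\mathfrak{m}^{i+1}F$. I must check that the exponent $s$ in $w=\gamma^sw'$ equals $i$. If $s>i$, then $w\in\mathfrak{m}^{i+1}F$, which contradicts the hypothesis. If $s<i$, then $\gamma^sw'\in\mathfrak{m}^iF$, and Lemma \ref{lem:uniqueExpressionNorton2000} gives $w'\in\mathfrak{m}^{i-s}F\subseteq\mathfrak{m}F$, which is also a contradiction. When $s=0$ that lemma does not apply, but then directly $w=w'\notin\mathfrak{m}F$, contradicting $i>0$. Hence $s=i$ and $Z(w,\varphi)=\mathfrak{m}^iF$.

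I expect the main obstacle to be this bookkeeping on the exponent, in particular handling the edge cases $s=0$ and $i=\nu$ correctly. The core content is just the rank comparison in the first step.
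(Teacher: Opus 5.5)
Your proof is correct and follows the paper's own argument. You write $w=\gamma^s w'$ with $w'\notin\mathfrak{m}F$, use Lemma~\ref{lem:cyclicSubModWhenMinIsIrreducible} together with $\deg(\lambda_\varphi)=\operatorname{rank}_R(F)$ to get $Z(w',\varphi)=F$, and then multiply by $\gamma^s$. You also make explicit two steps the paper leaves implicit: the appeal to Proposition~\ref{prop:steinitz}(ii) to conclude $Z(w',\varphi)=F$, and the check via Lemma~\ref{lem:uniqueExpressionNorton2000} that the exponent $s$ equals $i$, which the ``in particular'' clause requires.
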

		\begin{proof}
			Let $w\in F.$ Following Lemma \ref{lem:uniqueExpressionSPIR}, let $0\leq i\leq \nu$ and $w'\in F$ such that $w=\gamma^iw'$ and $\mu(w')\neq 0.$ By Lemma \ref{lem:cyclicSubModWhenMinIsIrreducible}, we have that $Z(w',\varphi)$ is free and of rank $\deg(\lambda_\varphi).$ But as $\varphi$ is non-derogatory $\deg(\lambda_\varphi)=\deg(\rchi_\varphi)=rank_R(F).$ Hence $F=Z(w',\varphi)$ which implies that $\mathfrak{m}^i F= \mathfrak{m}^iZ(w',\varphi)=Z(\gamma^iw',\varphi)=Z(w,\varphi).$ Conversely, it is easy to see by Lemma \ref{lem:cyclicSubModWhenMinIsIrreducible}, that for all $0\leq i\leq \nu,$ for each $w\in F \setminus \mathfrak{m}F,$ $Z(\gamma^iw,\varphi)=\mathfrak{m}^iF.$
		\end{proof}
		
		\begin{theorem}\label{thm:invariantSubmodulesNonDerogMinSqFree}
			Suppose that $\varphi$ is non-derogatory, and denote $\mathcal{I}(\varphi)$ the set of $\varphi$-invariant submodules of $F.$ We have then $\mathcal{I}(\varphi) = \left\{ Z(w,\varphi) ~|~ w\in F \right\}.$ Moreover, every $\varphi$-invariant submodule $W$ of $F$ is of the form
			\begin{equation}\label{eq:invariantSubmodulesNonDerogMinSqFree}
				W = \bigoplus_{i=1}^{r}\mathfrak{m}^{k_i}ker(f_i(\varphi)) ~~~~~~~~ \text{with } 0\leq k_i \leq \nu, \text{ for all } i.
			\end{equation}
		\end{theorem}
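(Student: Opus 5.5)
Non-derogatory here means $\rchi_\varphi=\lambda_\varphi$. By Corollary \ref{cor:charaPolOfSqFreeMin} we have $\rchi_\varphi=\prod_{i=1}^r f_i^{m_i}$. The plan is first to show that every $m_i=1$. The polynomials $\lambda_\varphi=\prod f_i$ and $\prod f_i^{m_i}$ are monic, and comparing degrees gives $\sum m_i\deg f_i=\sum \deg f_i$, so $m_i=1$ for all $i$. By Proposition \ref{prop:kernelsProp}(ii), $\operatorname{rank}_R(V_i)=\deg f_i$, and by Proposition \ref{prop:kernelsProp}(i), $\varphi_i$ is regular with $N_{\varphi_i}=(f_i)$. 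So $\lambda_{\varphi_i}=f_i$ is basic irreducible and equals $\rchi_{\varphi_i}$, which has degree $\operatorname{rank}(V_i)$. Hence each $\varphi_i$ is non-derogatory with basic irreducible minimal polynomial, and Lemma \ref{lem:cyclicSubModNonDerogMinIrred} applies to each $\varphi_i$ on $V_i$.

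Next, for the decomposition (\ref{eq:invariantSubmodulesNonDerogMinSqFree}), let $W$ be $\varphi$-invariant. By Proposition \ref{prop:invariantDirectSumWhenMinSqFree}, $W=\bigoplus_i W\cap V_i$, with each $W\cap V_i$ a $\varphi_i$-invariant submodule of $V_i$. I then need to show that each $\varphi_i$-invariant submodule $U$ of $V_i$ equals $\mathfrak{m}^{k}V_i$ for some $k$.

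\begin{itemize}
\item By Lemma \ref{lem:numMinGenSubModSPIR}, $U$ is finitely generated.
\item Choose $k$ minimal with $U\subseteq \mathfrak{m}^kV_i$, taking $k=\nu$ if $U=0$.
\item If $k<\nu$, pick $u\in U\setminus \mathfrak{m}^{k+1}V_i$; such a $u$ exists by the minimality of $k$.
\item By Lemma \ref{lem:cyclicSubModNonDerogMinIrred}, $Z(u,\varphi)=\mathfrak{m}^kV_i$.
\item Since $Z(u,\varphi)\subseteq U$ by invariance, we get $\mathfrak{m}^kV_i\subseteq U\subseteq\mathfrak{m}^kV_i$, so equality holds.
\end{itemize}

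This yields $W=\bigoplus_i\mathfrak{m}^{k_i}V_i$. Conversely, each such sum is clearly $\varphi$-invariant, since each $\mathfrak{m}^{k_i}V_i$ is.

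Finally, I show $\mathcal{I}(\varphi)=\{Z(w,\varphi)\}$. One inclusion is trivial, because cyclic submodules are invariant. For the other, given $W=\bigoplus\mathfrak{m}^{k_i}V_i$, write $J=\{i: k_i<\nu\}$. For each $i\in J$ pick $v_i\in V_i\setminus\mathfrak{m}V_i$, which exists since $V_i\ne0$, and set $w=\sum_{i\in J}\gamma^{k_i}v_i$. By Lemma \ref{lem:sumCoprimeCyclicSubModules}, $Z(w,\varphi)=\bigoplus_{i\in J}Z(\gamma^{k_i}v_i,\varphi)$, and Lemma \ref{lem:cyclicSubModNonDerogMinIrred} gives $Z(\gamma^{k_i}v_i,\varphi)=\mathfrak{m}^{k_i}V_i$. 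Hence $Z(w,\varphi)=W$, where $w=0$ gives $W=0$.

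The main obstacle is the first step. There I must be careful that $\deg\lambda_\varphi=\deg\rchi_\varphi$ really forces $m_i=1$, which follows from the degree count above. I must also check that the restrictions $\varphi_i$ inherit non-derogatoriness, which is needed to invoke Lemma \ref{lem:cyclicSubModNonDerogMinIrred} on each $V_i$ rather than on $F$.
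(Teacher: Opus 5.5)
Your argument is correct, and it takes a more elementary route than the paper's.

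The paper establishes cyclicity first. It specializes Theorem \ref{thm:invSubIsDirSumOfCycSub} to the case $m_i=1$ to see that each $W\cap V_i$ is a single $Z(w_i,\varphi)$. That theorem rests on viewing $V_i$ as a free module over the SPIR $R[x]/(f_i)$ and on the Smith-normal-form enumeration of Theorem \ref{thm:subModEnumFromAGivenBasis}. The paper then glues these cyclic pieces with Lemma \ref{lem:sumCoprimeCyclicSubModules}, and only at the end reads off (\ref{eq:invariantSubmodulesNonDerogMinSqFree}) from Lemma \ref{lem:cyclicSubModNonDerogMinIrred}.

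You reverse the order. You first show directly, by a sandwich argument, that the $\varphi_i$-invariant submodules of $V_i$ are exactly the chain $\mathfrak{m}^kV_i$; this uses only Lemma \ref{lem:cyclicSubModNonDerogMinIrred}. You then construct the generator $w$ explicitly.

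What your route buys:
\begin{itemize}
\item It bypasses the module-theoretic machinery entirely.
\item It makes explicit two points the paper leaves tacit: that non-derogatory forces $m_i=1$, and that each $\varphi_i$ is non-derogatory with basic irreducible minimal polynomial. The second point is precisely what licenses applying Lemma \ref{lem:cyclicSubModNonDerogMinIrred} on $V_i$ rather than on $F$.
\end{itemize}
The paper's route is shorter given what it has already built, and it exhibits the result as a degenerate case of the general description of invariant submodules.

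One slip to fix. Since $U\subseteq V_i=\mathfrak{m}^0V_i$, the \emph{minimal} $k$ with $U\subseteq\mathfrak{m}^kV_i$ is always $0$, so as written nothing guarantees $u\in U\setminus\mathfrak{m}^{k+1}V_i$. You need the \emph{maximal} such $k$, i.e.\ the smallest member of the chain containing $U$. Your convention $k=\nu$ for $U=0$ shows this is what you meant. Maximality is what yields such a $u$ whenever $k<\nu$, and with it the sandwich argument goes through.
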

		\begin{proof}
		Obviously, each $\varphi$-cyclic submodule of $F$ is $\varphi$-invariant.		
		Conversely, let $W\le F$ be a $\varphi$-invariant submodule. As $\varphi$ is non-derogatory, from Theorem \ref{thm:invSubIsDirSumOfCycSub}, for all $1\leq i\leq r,$ there exists an element $w_i\in V_i$ such that $W=\bigoplus_{i=1}^rZ(w_i,\varphi),$ which implies by Lemma \ref{lem:sumCoprimeCyclicSubModules} that $W=Z(w_1+\dots+w_r,\varphi).$

		 Equation (\ref{eq:invariantSubmodulesNonDerogMinSqFree}) follows then immediately from lemmas \ref{lem:sumCoprimeCyclicSubModules} and \ref{lem:cyclicSubModNonDerogMinIrred}.
		\end{proof}
		
		\begin{remark}
			A polycyclic code of length $n$ associated to a polynomial $f$ over a finite chain ring $R$ can be seen as a submodule of $R^n$ that is invariant under the action of the matrix $\operatorname{com}(f).$ When $f$ is residually square-free, the previous theorem corrects a result given in \cite[Theorem 4.2]{EdgarMaryamSteve} concerning the structure of the polycyclic codes associated to $f,$ where each $k_i$ in Equation (\ref{eq:invariantSubmodulesNonDerogMinSqFree}) was taken to be either $\nu$ or $0.$
		\end{remark}
		
		The next corollary follows immediately.
		\begin{corollary}
			Suppose that $\varphi$ is non-derogatory. The following hold.
			\begin{itemize}
				\item[(a)] $\{V_i ~|~ 1\leq i\leq r\}$ is the set of all minimal nonzero free $\varphi$-invariant submodule of $F.$
				\item[(b)] Every free $\varphi$-invariant submodule of $F$ is of the form $\bigoplus_{j\in J}V_j$ with $J\subseteq \{1,\dots,r\}.$
				\item[(c)] There are $(\nu+1)^r$ $\varphi$-invariant submodules of $F,$ among which $2^r$ are free.
			\end{itemize}
		\end{corollary}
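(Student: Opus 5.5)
The plan is to read all three items off Theorem \ref{thm:invariantSubmodulesNonDerogMinSqFree}, together with a check that the decomposition in Equation (\ref{eq:invariantSubmodulesNonDerogMinSqFree}) is unique. Since $\varphi$ is non-derogatory, $\rchi_\varphi=\lambda_\varphi=\prod_i f_i$. By Corollary \ref{cor:charaPolOfSqFreeMin}, $\rchi_\varphi=\prod_i f_i^{m_i}$, and comparing degrees gives $m_i=1$ for all $i$. Hence each $V_i$ is free of rank $\deg(f_i)$ by Proposition \ref{prop:kernelsProp}(ii), and $\varphi_i$ is non-derogatory with basic irreducible minimal polynomial $f_i$. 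Lemma \ref{lem:cyclicSubModNonDerogMinIrred} then says the $\varphi$-invariant submodules of $V_i$ are exactly the chain $\mathfrak{m}^{k}V_i$ with $0\leq k\leq\nu$.

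For (c), every $W\in\mathcal{I}(\varphi)$ has the form $\bigoplus_i \mathfrak{m}^{k_i}V_i$. Conversely, each such module is invariant, since each summand is. The tuple $(k_1,\dots,k_r)$ is determined by $W$, because $W\cap V_i=\mathfrak{m}^{k_i}V_i$ by Proposition \ref{prop:invariantDirectSumWhenMinSqFree}. The modules $\mathfrak{m}^{k}V_i$ are pairwise distinct for $0\leq k\leq \nu$: by Proposition \ref{prop:moduleChainHasSameMinGenNum}(a) we have $\mathfrak{m}^kV_i/\mathfrak{m}^{k+1}V_i\simeq V_i/\mathfrak{m}V_i\neq 0$ for $k\leq\nu-1$. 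This gives $(\nu+1)^r$ invariant submodules.

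For freeness, Proposition \ref{prop:invariantDirectSumWhenMinSqFree}(iii) reduces the question to whether each $\mathfrak{m}^{k_i}V_i$ is free. It is free when $k_i\in\{0,\nu\}$. It is not free when $1\leq k_i\leq \nu-1$, since it is nonzero and annihilated by $\gamma^{\nu-k_i}\neq 0$, while a nonzero free $R$-module has zero annihilator. This yields the $2^r$ free ones and gives (b) directly: a free invariant $W$ equals $\bigoplus_{j\in J}V_j$ with $J=\{j: k_j=0\}$.

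For (a), each $V_i$ is a nonzero free invariant submodule. By (b), any nonzero free invariant submodule contained in $V_i$ is $\bigoplus_{j\in J}V_j$ with $J\neq\emptyset$ and $J\subseteq\{i\}$, so it equals $V_i$; thus $V_i$ is minimal. Conversely, any minimal nonzero free invariant submodule is $\bigoplus_{j\in J}V_j$ with $J\neq\emptyset$, and it contains $V_j$ for any $j\in J$, so minimality forces $|J|=1$.

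The only nonformal point is the uniqueness of the exponents $k_i$, that is, the strictness of the chain $\mathfrak{m}^kV_i$. This is handled by Proposition \ref{prop:moduleChainHasSameMinGenNum}(a), so I expect no real obstacle.
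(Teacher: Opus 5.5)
Your proposal is correct and takes the same route as the paper, which only remarks that the corollary follows immediately from Theorem \ref{thm:invariantSubmodulesNonDerogMinSqFree}. You fill in the details the paper leaves implicit: that $m_i=1$; that the chain $\mathfrak{m}^kV_i$, $0\leq k\leq\nu$, is strict, which makes the exponents $k_i$ unique and justifies the count $(\nu+1)^r$; and that $\mathfrak{m}^kV_i$ is not free for $1\leq k\leq \nu-1$, because it is nonzero yet annihilated by $\gamma^{\nu-k}\neq 0$.
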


		Recall that a submodule $W$ of $F$ is called $\varphi$-hyperinvariant if it is invariant under every $R$-module endomorphism $\psi \in \text{End}_R(F)$ that commutes with $\varphi.$
		\begin{proposition}
			Suppose that $\varphi$ is non-derogatory. Then every $\varphi$-invariant submodule $W$ of $F$ is $\varphi$-hyperinvariant. 	
		\end{proposition}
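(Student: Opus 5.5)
The plan is to use the description of invariant submodules in Theorem \ref{thm:invariantSubmodulesNonDerogMinSqFree}: every $\varphi$-invariant submodule has the form
\[
W=\bigoplus_{i=1}^{r}\mathfrak{m}^{k_i}V_i, \qquad 0\le k_i\le\nu .
\]
Let $\psi\in\operatorname{End}_R(F)$ commute with $\varphi$. It then suffices to show two things: that $\psi(V_i)\subseteq V_i$ for every $i$, and that $\psi(\mathfrak{m}^{k}V_i)\subseteq \mathfrak{m}^kV_i$.

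\textbf{Step 1.} Since $\psi$ commutes with $\varphi$, it commutes with $f_i(\varphi)$ for every polynomial $f_i$. Hence for $v\in V_i=\ker(f_i(\varphi))$ we have $f_i(\varphi)(\psi(v))=\psi(f_i(\varphi)(v))=0$, so $\psi(v)\in V_i$. Thus each $V_i$ is $\psi$-invariant.

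\textbf{Step 2.} Because $\psi$ is $R$-linear, $\psi(\gamma^{k}v)=\gamma^{k}\psi(v)$. So $\psi(\mathfrak{m}^{k_i}V_i)=\gamma^{k_i}\psi(V_i)\subseteq\gamma^{k_i}V_i=\mathfrak{m}^{k_i}V_i$.

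\textbf{Step 3.} Summing over $i$ gives $\psi(W)\subseteq W$.

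An alternative route is to write $W=Z(w,\varphi)$ using the first part of Theorem \ref{thm:invariantSubmodulesNonDerogMinSqFree}. In that form, $\psi(g(\varphi)(w))=g(\varphi)(\psi(w))$, so one would need $\psi(w)\in W$, which is exactly what the decomposition above supplies.

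There is essentially no obstacle. The only point to check is that the form (\ref{eq:invariantSubmodulesNonDerogMinSqFree}) holds for all invariant submodules, not only the free ones, and the theorem asserts exactly that.
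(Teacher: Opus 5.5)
Your proof is correct and follows essentially the same route as the paper. You write $W=\bigoplus_{i=1}^{r}\mathfrak{m}^{k_i}V_i$ via Theorem~\ref{thm:invariantSubmodulesNonDerogMinSqFree}, note that $\psi$ commutes with each $f_i(\varphi)$ and so preserves $V_i=\ker(f_i(\varphi))$, and conclude by $R$-linearity that $\psi(\gamma^{k_i}V_i)\subseteq\gamma^{k_i}V_i$.
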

		\begin{proof}
			Let $W$ be a $\varphi$-invariant submodule of $F$ and let $\psi$ be an endomorphism of $F$ that commutes with $\varphi.$ Following Theorem \ref{thm:invariantSubmodulesNonDerogMinSqFree}, let $k_1,\dots,k_r\in \mathbb{N}$ such that $W= \bigoplus_{i=1}^{r}\mathfrak{m}^{k_i}V_i.$ We have then that for every element $w\in W,$ for each $1\leq i\leq r,$ there exists $w_i\in V_i$ such that $w=\bigoplus_{i=1}^{r}\gamma^{k_i}w_i.$ On the other hand, $\psi$ commutes with every polynomial in $\varphi,$ hence we have for all $i,$ $f_i(\varphi)(\psi(w_i))=\psi(f_i(\varphi)(w_i))=0,$ which implies that $\psi(w_i)\in V_i.$ We have then, $\psi(w)=\psi\left(\sum_{i=1}^{r}\gamma^{k_i}w_i\right)=\sum_{i=1}^{r}\gamma^{k_i}\psi(w_i)\in \bigoplus_{i=1}^{r}\mathfrak{m}^{k_i}V_i=W,$ and thus $W$ is $\psi$-invariant.
		\end{proof}

	\section*{Acknowledgments}
		The first author thanks Hassan Ou-azzou for his many helpful remarks and suggestions that helped improve this paper.

	\printbibliography

	\end{document}